\theoremstyle{definition}
	\newtheorem{definition}{Definition}[section]
	\newtheorem{remark}[definition]{Remark}
	\newtheorem*{example*}{Example}
	\newtheorem*{mt}{Main Theorem}
\theoremstyle{plain}
	\newtheorem{lemma}[definition]{Lemma}
	\newtheorem*{lemma*}{Lemma}
	\newtheorem{proposition}[definition]{Proposition}
	\newtheorem{theorem}[definition]{Theorem}
	\newtheorem*{theorem*}{Theorem}
	\newtheorem{corollary}[definition]{Corollary}
\newcommand{\specialcell}[1]{\ifmeasuring@#1\else\omit$\displaystyle#1$\ignorespaces\fi}
\newcommand{\numset}[1]{\mathbf{#1}}
	\newcommand{\rr}{\numset{R}}
	\newcommand{\zz}{\numset{Z}}
	\newcommand{\nn}{\numset{N}}
	\newcommand{\R}{\rr}
	\newcommand{\N}{\nn}
	\newcommand{\one}{\mathbf{1}}
	\newcommand{\e}{\mathrm{e}}
		\newcommand{\Exp}[1]{\e^{#1}}
	\providecommand*{\diff}%
		{\@ifnextchar^{\DIfF}{\DIfF^{}}}
	\def\DIfF^#1{%
		\mathop{\mathrm{\mathstrut d}}%
			\nolimits^{#1}\gobblespace}
	\def\gobblespace{%
		\futurelet\diffarg\opspace}
	\def\opspace{%
		\let\DiffSpace\!%
		\ifx\diffarg(%
			\let\DiffSpace\relax
		\else
			\ifx\diffarg[%
				\let\DiffSpace\relax
			\else
				\ifx\diffarg\{%
					\let\DiffSpace\relax
				\fi\fi\fi\DiffSpace}
	\renewcommand{\d}{\diff}
	\newcommand{\dd}{\diff}
	\newcommand{\pp}{\mathbf{P}}
	\newcommand{\ee}{\mathbf{E}}
\newcommand{\invar}{^\textnormal{inv}}
\newcommand{\simcompact}{I}
\newcommand{\simball}{J}
\newcommand{\simdiag}{K}
\newcommand{\stopping}{\mathcal{T}}
\newcommand{\XXXX}{\mathcal{X}}
\newcommand{\YYYY}{\mathcal{Y}}
\newcommand{\ZZZZ}{\mathcal{U}}
\newcommand{\xxxx}{{x}}
\newcommand{\yyyy}{{y}}
\newcommand{\zzzz}{{u}}
\newcommand{\FF}{\mathcal{F}}
\newcommand{\IP}{\mathbb{P}}
\newcommand{\PP}{\mathcal{P}}
\newcommand{\PPPP}{\mathfrak{P}}
\newcommand{\E}{\mathbb{E}}
\newcommand{\KK}{\mathcal{K}}
\newcommand{\RR}{\mathcal{R}}
\newcommand{\rectm}{\Sigma}
\newcommand{\bs}{\bold s}
\newcommand{\bxi}{\boldsymbol{\xi}}
\newcommand{\lag}{\langle}
\newcommand{\rag}{\rangle}
\newcommand{\T}{{\mathbb T}}
\newcommand{\HH}{{H}}
\newcommand{\HHH}{\mathcal{H}}
\newcommand{\VV}{{\mathcal V}}
\newcommand{\de}{\delta}
\newcommand{\La}{\Lambda}
\newcommand{\la}{\lambda}
	\newcommand{\cpoiss}{Y}
	\newcommand{\ASc}{\zeta}
	\newcommand{\AScaux}{\xi}
	\newcommand{\ASconstc}{\psi}
	\newcommand{\ASconstcaux}{\varphi}
\begin{document}

\title{Exponential mixing under controllability conditions for \textsc{sde}s driven by a degenerate Poisson noise}
\date{}
\author{
	Vahagn~Nersesyan%
	\textsuperscript{1,2,}%
	\and
	Renaud~Raqu\'epas%
	\textsuperscript{3,4}
}

\maketitle
\begin{center}
\small
\begin{tabular}{c c c c}
   1.  Laboratoire de math\'ematiques de Versailles
	 	&&& 3. McGill University \\
		CNRS, UVSQ, Universit\'e Paris-Saclay
	 	&&& Dept.\ of Mathematics and Statistics \\
	 	F-78\,035 Versailles
	 	&&& 1005--805 rue Sherbrooke~Ouest \\
	 France
	 	&&& Montr\'eal (Qu\'ebec)~H3A 0B9, Canada \\
		&&& \\
   2. Centre de recherches math\'ematiques  &&& 4. Univ. Grenoble Alpes \\
   CNRS, Universit\'e de Montr\'eal &&& CNRS, Institut Fourier \\
   CP~6129, Succursalle Centre-ville &&& F-38\,000 Grenoble \\
  	Montr{\'e}al (Qu{\'e}bec)~H3C 3J7, Canada &&& France\\
\end{tabular}
\end{center}

\begin{abstract}
	We prove existence and uniqueness of the invariant measure and  exponential mixing in the total-variation norm for a class of stochastic differential equations driven by degenerate compound Poisson processes. In addition to mild assumptions on the distribution of the jumps for the driving process, the hypotheses for our main result are that the corresponding control system is dissipative, approximately controllable and solidly controllable. The solid controllability assumption is weaker than the well-known parabolic H\"ormander condition and is only required from a single point to which the system is approximately controllable.
 Our analysis applies to Galerkin projections of stochastically forced parabolic partial differential equations with asymptotically polynomial nonlinearities and to networks of quasi-harmonic oscillators connected to different Poissonian baths.
	 \begin{description}
		 \item[Key words:] stochastic differential equations, Poisson noise, exponential mixing,
coupling, controllability, H\"ormander condition
		\item[MSC2010:] 60H10, 37A25, 93B05
	\end{description}
\end{abstract}

\setcounter{tocdepth}{1}
\tableofcontents


\section{Introduction}

Motivated by applications to thermally driven harmonic networks and to Galerkin approximations of partial differential equations (\textsc{pde}s) randomly forced by degenerate noise,
we consider a stochastic differential equation (\textsc{sde}) of the form
\begin{equation}\label{0.1}
	\dd X_t= f(X_t) \dd t+	B\dd \cpoiss_t,
\end{equation}
where $f:\R^d\to \R^d$ is a   smooth     vector field, $B:\R^n\to  \R^d$ is a linear map, and~$(\cpoiss_t)_{t \geq 0}$ is an~$n$-dimensional \textit{compound Poisson process} of the form
\begin{equation}\label{0.2}
	\cpoiss_t=\sum_{k=1}^\infty  \eta_k \one_{[\tau_k, \infty)}(t).
\end{equation}
{Throughout the paper, the jump displacements~$\{\eta_k\}_{k\in\nn}$ are independent and identically distributed random variables {with law~$\ell$} and the waiting times separating the jumps, defined as~$t_1=\tau_1$ and~$t_k=\tau_k-\tau_{k-1}$ for~$k \ge2$, form a sequence~$\{t_k\}_{k\in\nn}$ of independent exponentially distributed random variables with common rate parameter~$\la>0$.}
Moreover, the sequences~$\{\eta_k\}_{k\in\nn}$ and~$\{t_k\}_{k\in\nn}$ are independent from one another. {We are interested in the \emph{noise-degenerate} case, that is when $\operatorname{rank}(B) < d$.}


The aim of this paper is to establish exponential mixing for the \textsc{sde}~\eqref{0.1} under some mild dissipativity and controllability conditions. The precise hypotheses are the following.
\begin{itemize}
	\item[(C1)] There are  numbers $\alpha>0$ and $\beta>0$   such that
	\begin{equation}\label{0.3}
	 	\braket{f(y), y }\le -\alpha \|y\|^2 +\beta
	\end{equation}
	{for all $y\in \R^d$, where $\braket{{\,\cdot\,},{\cdot\,}}$ and $\|{\,\cdot\,}\|$ are a scalar product and the associated norm  in $\R^d$.}
\end{itemize}
{Combined with the regularity of~$f$ and the fact that $\sum_{k=1}^\infty t_k=+\infty$ with probability 1, it ensures the global well-posedness of the \textsc{sde}~\eqref{0.1}. It also strongly suggests the norm squared as a candidate Lyapunov function.}
The other two conditions are related to the controllability of the system{: we ask that there exists a point~$\hat{x} \in \rr^d$ such that the system is \emph{both} approximately controllable to~$\hat{x}$ \emph{and} solidly controllable form~$\hat{x}$. To formulate these conditions more precisely}, we introduce  the following (deterministic) mapping. For $T>0$ a given time,
  \begin{equation}\label{0.4}
	\begin{split}
		S_T: \R^d\times C([0,T]; \R^n) &\to \R^d, \\
			(x,\zeta) &\mapsto y_T,
	\end{split}
  \end{equation}
 where $(y_t)_{t \in [0,T]}$ is the solution of the controlled problem
\begin{equation}\label{0.5}
	\begin{cases}
		\dot y_t = f(y_t) + B \zeta_t, \\
		y_0 = x.
	\end{cases}
\end{equation}
Accordingly, we will refer to the first argument of~$S_T({\,\cdot\,},{\cdot\,})$ as an  initial condition and to the second one  as  a control.

\begin{itemize}
	\item[(C2)] The system is \textit{approximately controllable to} $\hat x \in \R^d$: for any number $\epsilon>0$ and any radius $R>0$, we can find a time~$T>0$ such that for any initial point $x\in \R^d$ with~$\|x\|\leq R$, there exists a control~$\zeta\in C([0,T]; \R^n)$ verifying
	\begin{equation}
		\|S_T(x,\zeta)-\hat x\|<\epsilon.\label{0.6}
	\end{equation}

	\item[(C3)] The system is \textit{solidly controllable from} $\hat x$: there is a  number $\epsilon_0>0$, a time $T_0>0$, a compact set $\KK$ in~$C([0,T_0]; \R^n)$ and a non-degenerate ball
	$G$ in $\R^n$  such that, for any continuous function $\Phi:\KK\to \R^d$ satisfying the relation
	$$
	\sup_{\zeta\in \KK} \|\Phi(\zeta)-S_{T_0}(\hat x,\zeta)\|\le \epsilon_0,
	$$we have $G\subset \Phi(\KK)$.
\end{itemize}
Condition~\textnormal{(C2)} is a well-known controllability property, and~\textnormal{(C3)} is an   accessibility property that is weaker than the weak  H\"ormander condition at the point $\hat x$ (see Section~\ref{A:B} for a discussion).

We denote by $(X_t, \IP_x)$ the Markov family associated with the \textsc{sde}~\eqref{0.1}
parametrised by the time~$t \geq 0$ and the initial condition $x\in \rr^d$, by~$P_t(x,{\cdot\,})$ the corresponding transition function, and by
$\PPPP_t $ and~$\PPPP_t^* $   the Markov semigroups
\[
\PPPP_t g(x)=\int_{\rr^d}g(y)\,P_t(x,\d
y) \qquad\text{ and }\qquad\PPPP^*_t\mu(\Gamma)=\int_{\rr^d}
P_t(y,\Gamma)\,\mu(\d y),
\]
where $g\in L^\infty(\R^d)$ and $\mu\in {\mathcal{P}}(\rr^d).$ Recall that a measure~$\mu\invar\in {\mathcal{P}}(\R^d)$ is said to be \emph{invariant}   if~$\PPPP^*_t\mu\invar=\mu\invar$ for all $t\ge0$.
\begin{mt}
	Assume that  Conditions~\textnormal{(C1)--(C3)} are satisfied and that
	the law of~$\eta_k$ has finite variance and~possesses a continuous positive density  with respect to the Lebesgue measure on~$\rr^n$.
	Then, the semigroup $(\PPPP^*_t)_{t \geq 0}$  	admits a unique invariant measure~$\mu\invar \in \mathcal{P}(\rr^d)$. Moreover, there exist constants $C > 0$ and~$c > 0$ such that
	\begin{equation}\label{0.7A}
		\|\PPPP^*_t\mu - \mu\invar \|_{\textnormal{var}} \leq C\, \Exp{-c t} \left( 1 + \int_{\rr^d} \|x\| \,\mu(\d x)\right)
	\end{equation}
	for any $\mu\in\mathcal{P}(\rr^d)$ and~$t \geq 0$.
\end{mt}
In the literature, the problem of ergodicity for \textsc{sde}s driven by a degenerate noise is mostly considered when the perturbation is a Brownian motion, {the system admits a Lyapunov function}, and the H\"ormander condition is satisfied at all the points of the state space. Under these   assumptions, the transition function of   the underlying Markov process has a   smooth density with respect to Lebesgue measure which is almost surely positive.
This implies that the process is \textit{strong Feller} and \textit{irreducible}, so it has a unique invariant measure by Doob's theorem (see Theorem~4.2.1 in~\cite{dPZa} and~\cite{MT1993,H-2012} for related results).

{Even with the assumption that the noise is Gaussian,} there are only few papers that consider the problem of ergodicity for an \textsc{sde} without the H\"ormander condition being satisfied everywhere. In~\cite{AK87}, the uniqueness property for invariant measures is proved for degenerate diffusions, under the assumption that the H\"ormander condition holds at one point and that the process is irreducible. The proof relies heavily on the Gaussian nature of the noise.
In the paper~\cite{Sh17}, an approach based on controllability and a coupling argument is given for a study of dynamical systems on compact metric spaces subject to a more general degenerate noise: under the controllability assumptions~\textnormal{(C2)} and~\textnormal{(C3)} and a \textit{decomposability} assumption on the noise, exponential mixing in the total-variation metric is established. This approach can be carried to problems on a non-compact space, provided a dissipativity of the type of~\textnormal{(C1)} holds; see~\cite{Ra18} for a study of networks of quasi-harmonic oscillators. The class of decomposable noises includes\,---\,but is not limited to\,---\,Gaussian measures.

The present paper falls under the continuity of the study carried out in these references. The main difficulty in our case comes from the fact that the Poisson noise we consider, in addition to being degenerate, does not have a decomposability structure; also see~\cite{Ne08}, where polynomial mixing is proved for the complex Ginzburg--Landau equation driven by a non-degenerate compound Poisson process. Yet, the methods we use still stem from a control and coupling approach, which we outline in the following paragraphs; also see the beginning of Section~\ref{sec:coupling}. Indeed, the combination of coupling and controllability arguments has the advantage of yielding rather simple proofs of otherwise very technical results and also accommodates a wide variety of (non-Gaussian) noises for which other methods fail.

We hope that treating a relatively tractable problem in an essentially self-contained way will help interested readers in making their way to understanding technically more difficult problems for which methods of the same flavour are used.

\bigskip

For a discrete-time Markov family on a compact state space~$\mathcal X$, existence of an  invariant measure can be obtained from a Bogolyubov--Krylov argument and it is typical to derive uniqueness and mixing from a uniform upper bound on the total-variation distance between the transition functions from different points. One way to prove uniqueness using such a uniform \textit{squeezing} estimate is through a so-called Doeblin coupling argument, where one constructs a Markov family on~$\mathcal X \times \mathcal X$ whose projections to each copy of~$\mathcal X$ have the same distribution as the original Markov family, and with the property that {it hits the diagonal $\{(x,x) : x \in \mathcal{X}\}$ soon enough, often enough}. We refer the interested reader to the paper~\cite{Gr75} and to Chapter~3 of the monograph~\cite{KuSh} for an introduction to these ideas, which go back to Doeblin, Harris, and Vaserstein.

When the state space~$\mathcal X$ is not compact, existence of an  invariant measure requires additional arguments and one can rarely hope to prove squeezing estimates which hold uniformly on the whole state space. The Bogolyubov--Krylov argument for existence can be adapted provided that one has a suitable Lyapunov structure. As for uniqueness and mixing, the coupling argument will go through with a squeezing estimate which only holds for points in a small ball, provided that one can obtain good enough estimates on the hitting time  of that ball. Over the past years, it has become evident that control theory provides a good framework for formulating conditions that are sufficient for this endeavour when the noise is degenerate.

\paragraph*{Acknowledgements}
This research was supported by the {Agence Nationale de la Recherche} through  the grant NONSTOPS (ANR-17-CE40-0006-01, ANR-17-CE40-0006-02, ANR-17-CE40-0006-03). VN was supported by the CNRS PICS \textit{Fluctuation theorems in stochastic systems}. The research of RR was supported by the National Science and Engineering Research Council (NSERC) of Canada. Both authors would like to thank No\'{e} Cuneo, Vojkan Jak\v{s}i\'{c}, Claude-Alain Pillet and Armen Shirikyan for discussions and comments on this manuscript.

\subsubsection*{Notation}
For $(\mathcal{X},d)$ a  Polish space, we shall use the following notation throughout the paper:
\begin{itemize}
	\item $B_\mathcal{X}(x,\epsilon)$ for the closed ball in~$\mathcal{X}$ of radius~$\epsilon$ centred at~$x$ (we shall simply write $B(x,\epsilon)$ in the special case $\mathcal{X}=\rr^d$);
	\item $\mathcal{B}(\mathcal{X})$ for its Borel $\sigma$-algebra;
	\item $L^\infty(\mathcal{X})$ for the space of all bounded Borel-measurable functions $g:\mathcal{X}\to \rr$, endowed with the norm $\|g\|_\infty=\sup_{y\in \mathcal{X}} |g(y)|$;
	\item $\PP(\mathcal{X})$ for the set of Borel probability measures on $\mathcal{X}$, endowed with the total variation norm: for $\mu_1, \mu_2\in \PP(\mathcal{X})$,
	\begin{align*}
	  \|\mu_1-\mu_2\|_{\textnormal{var}}&:=\frac12\sup_{\|g\|_\infty\le 1}|\lag g,\mu_1\rag-\lag g,\mu_2\rag|\\&=\sup_{\Gamma\in \mathcal{B}(\mathcal{X})} | \mu_1(\Gamma)- \mu_2(\Gamma)|,
	\end{align*}    where    $\lag g,\mu\rag=\int_{\mathcal{X}}g(y)\,\mu(\d y)$ for $g\in L^\infty(\mathcal{X})$ and $\mu \in {\mathcal{P}}(\mathcal{X})$.
\end{itemize}

Let $(\mathcal{Y},d')$ be another Polish space. The image of a measure $\mu\in\PP(\mathcal{X})$ under a Borel-measurable mapping $F:\mathcal{X}\to \mathcal{Y}$ is  denoted by $F_*\mu \in \PP(\mathcal{Y})$.

On any space, $\one_\Gamma$ stands for the indicator function of the set $\Gamma$. 

We use~$\zz$ for the set of integers and~$\N$ for the set  of natural numbers (without~0). For any $m\in \N$, we set
\begin{equation}\label{0.9}
 \N_m :=\{n \cdot  m: n\in \N\}\quad \text{ and } \quad \N_m^0:= \N_m\cup\{0\}.
\end{equation}

\noindent
We use $a\vee b$ [resp.~$a \wedge b$] for the maximum [resp.~minimum] of the numbers $a,b\in \rr$.

\section{Preliminaries and existence of an invariant measure}
\label{sec:prelim}

The \textsc{sde}~\eqref{0.1} has a unique   c\`adl\`ag solution satisfying the  initial condition $X_0=x.$
It is given by
\begin{equation}\label{eq:disc-to-cont}
 X_t =
	\begin{cases}
   	S_{t-\tau_k}(X_{\tau_k})       & \quad \text{if } t\in [\tau_k,\tau_{k+1}), \\
   	S_{t_{k+1}}(X_{\tau_{k}})+B\eta_{k+1}   & \quad \text{if } t=\tau_{k+1},
 	\end{cases}
\end{equation}
where $\tau_0=0$ and $S_t(x)=S_t(x,0)$ is the solution of the  undriven equation. Relation~\eqref{eq:disc-to-cont} will allow us to reduce the study of the ergodicity of the full process~$(X_t)_{t\geq0}$ to that of the embedded process~$(X_{\tau_k})_{k\in\nn}$ obtained by considering its values at jump times $\tau_k$. The strong Markov property implies that the latter is a Markov process with respect to the filtration generated by the random variables~$\{t_j, \eta_j\}_{j=1}^k$.
We denote by $\hat P_k$ the corresponding transition function: for~$x\in \rr^d$ and~$\Gamma\in \mathcal{B}(\rr^d)$,
\begin{equation}
\label{eq:def-P-hat}
	\hat P_k(x,\Gamma) := \IP_x\left\{X_{\tau_{k}}\in \Gamma\right\}.
\end{equation}
The key consequences of the dissipativity Condition~\textnormal{(C1)} are the moment estimates of the following lemma. They imply, in particular, existence of a  suitable Lyapunov structure given by   the norm squared.
\begin{lemma}\label{lem:Lyap}
	Under Condition~\textnormal{(C1)}, we have the following bounds:
	\begin{itemize}
		\item[(i)] for any $\epsilon > 0$, there exists a constant $C_\epsilon > 0$ such that
		\begin{equation}\label{eq:det-bound-emb}
			\|X_{\tau_k}\|^2 \leq (1+\epsilon)^k \Exp{-2\alpha\tau_k} \|X_0\|^2 + C_\epsilon \sum_{j=1}^k \Exp{-2\alpha(\tau_k-\tau_j)}(1+\epsilon)^{k-j}(1+\|\eta_j\|^2)
		\end{equation}
		for all~$x \in \rr^d$ and~$k \in \nn$;
		\item[(ii)] there are numbers $ \gamma\in (0,1)$ and $C>0$
		such that
		\begin{align}
			\E_x \|X_{\tau_k}\|^2 &\leq \gamma^k \|x\|^2+ C(1+\Lambda), \label{eq:moment-emb}\\
			\E_x \|X_t\|^2 &\leq (1-\gamma)^{-1}\|x\|^2 + C(1+\Lambda)\label{eq:moment-cont}
		\end{align}
		for all~$x \in \rr^d, k\in \nn,$ and~$t \geq 0$, where  $ \Lambda := \E \|\eta_1\|^2$ and $\E_x$ is the expectation with respect~to~$\IP_x$.
	\end{itemize}
\end{lemma}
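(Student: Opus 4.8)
The plan is to establish the deterministic pathwise bound~(i) first, and then to extract the probabilistic estimates~(ii) from it by averaging, using the independence structure of the noise together with standard facts about the Poisson process. For~(i), I would argue as follows. On each interval $[\tau_k,\tau_{k+1})$ the solution obeys $\dot X_t=f(X_t)$, so differentiating $t\mapsto\|X_t\|^2$, invoking~\eqref{0.3}, and running a Gronwall argument gives the dissipation bound $\|S_t(x)\|^2\le\Exp{-2\alpha t}\|x\|^2+\beta/\alpha$ for all $x\in\rr^d$ and $t\ge0$. Feeding this into the jump relation of~\eqref{1.1} and using $\|a+b\|^2\le(1+\epsilon)\|a\|^2+(1+\epsilon^{-1})\|b\|^2$, one obtains, for any $\epsilon>0$, the one-step recursion
\[
	\|X_{\tau_{k+1}}\|^2\le(1+\epsilon)\Exp{-2\alpha t_{k+1}}\|X_{\tau_k}\|^2+C_\epsilon\bigl(1+\|\eta_{k+1}\|^2\bigr),
\]
with $C_\epsilon$ depending only on $\epsilon$ and the fixed data $\alpha,\beta,\|B\|$. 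Iterating from $k=0$ and using $t_1+\dots+t_k=\tau_k$ produces exactly~\eqref{eq:det-bound-emb}.

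For~\eqref{eq:moment-emb} I would simply take $\E_x$ in~\eqref{eq:det-bound-emb}. The waiting times are i.i.d.\ with $\E[\Exp{-2\alpha t_i}]=\lambda/(\lambda+2\alpha)=:q\in(0,1)$ and are independent of the jumps, so $\E[\Exp{-2\alpha\tau_k}]=q^k$, $\E[\Exp{-2\alpha(\tau_k-\tau_j)}]=q^{k-j}$, and $\E[1+\|\eta_j\|^2]=1+\Lambda$. Choosing $\epsilon$ once and for all so small that $\gamma:=(1+\epsilon)q<1$ and summing the resulting geometric series yields~\eqref{eq:moment-emb} with $C=C_\epsilon/(1-\gamma)$.

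For~\eqref{eq:moment-cont}, which is the delicate part, let $N_t$ denote the number of jumps in $[0,t]$. On $\{\tau_k\le t<\tau_{k+1}\}$ one has $X_t=S_{t-\tau_k}(X_{\tau_k})$ by~\eqref{1.1}; inserting the dissipation bound and then~\eqref{eq:det-bound-emb} and collapsing exponentials gives the pathwise estimate
\[
	\|X_t\|^2\le(1+\epsilon)^{N_t}\Exp{-2\alpha t}\|x\|^2+C_\epsilon\sum_{j=1}^{N_t}(1+\epsilon)^{N_t-j}\Exp{-2\alpha(t-\tau_j)}\bigl(1+\|\eta_j\|^2\bigr)+\frac{\beta}{\alpha}.
\]
Then I would take $\E_x$. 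Since $N_t$ is Poisson$(\lambda t)$, $\E[(1+\epsilon)^{N_t}]=\Exp{\lambda\epsilon t}$, so the first term is $\le\Exp{(\lambda\epsilon-2\alpha)t}\|x\|^2\le\|x\|^2$ as soon as $\lambda\epsilon\le2\alpha$, which is precisely the inequality $\gamma<1$ already imposed. For the sum, I would write it as $\sum_{j\ge1}\E_x[\one_{\{\tau_j\le t\}}(1+\epsilon)^{N_t-j}\Exp{-2\alpha(t-\tau_j)}(1+\|\eta_j\|^2)]$ and condition on $\sigma(t_1,\dots,t_j,\eta_1,\dots,\eta_j)$: by the memoryless (restart) property, on $\{\tau_j\le t\}$ the increment $N_t-j$ is conditionally Poisson$(\lambda(t-\tau_j))$, hence contributes a factor $\Exp{\lambda\epsilon(t-\tau_j)}$, and, $\eta_j$ being independent of $\tau_j$, the $j$-th term equals $(1+\Lambda)\,\E[\one_{\{\tau_j\le t\}}\Exp{-(2\alpha-\lambda\epsilon)(t-\tau_j)}]$. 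Since the jump times form a Poisson process of intensity $\lambda$, $\sum_{j\ge1}\E[\one_{\{\tau_j\le t\}}\Exp{-(2\alpha-\lambda\epsilon)(t-\tau_j)}]=\lambda\int_0^t\Exp{-(2\alpha-\lambda\epsilon)(t-s)}\,\dd s\le\lambda/(2\alpha-\lambda\epsilon)$, which is finite exactly because $\lambda\epsilon<2\alpha$; absorbing $\beta/\alpha$ and this constant into $C$ gives~\eqref{eq:moment-cont}.

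The hard part is~\eqref{eq:moment-cont}. The naive shortcut of bounding $\|X_t\|^2\le\|X_{\tau_{N_t}}\|^2+\beta/\alpha$ (via $\Exp{-2\alpha(t-\tau_{N_t})}\le1$) and then invoking~\eqref{eq:moment-emb} fails, because $\E_x\|X_{\tau_{N_t}}\|^2$ need not be bounded uniformly in $t$: controlling it would demand an exponential moment of the backward recurrence time $t-\tau_{N_t}$, hence $2\alpha<\lambda$, a restriction absent from the statement. One is therefore forced to keep the decaying factor $\Exp{-2\alpha(t-\tau_{N_t})}$ through the entire computation, and the real bookkeeping is to balance the growth $(1+\epsilon)^{N_t-j}$ against the decay $\Exp{-2\alpha(t-\tau_j)}$ using the Poisson restart property; conveniently, the same smallness of $\epsilon$ that secures $\gamma<1$ is exactly what makes the resulting integral converge. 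An alternative route is to apply Dynkin's formula to $V(x)=\|x\|^2$ — for which~\textnormal{(C1)} and the finite variance of the jumps give $\LL V\le-\alpha V+C$ — but making Dynkin's formula rigorous here requires its own a-priori moment input, which part~(i) supplies anyway.
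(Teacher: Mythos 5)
Your proof is correct. Part~(i) and the bound~\eqref{eq:moment-emb} follow exactly the paper's route: the Gr\"onwall estimate $\|S_t(x)\|^2 \le \Exp{-2\alpha t}\|x\|^2 + \beta\alpha^{-1}$, the one-step recursion via $\|a+b\|^2\le(1+\epsilon)\|a\|^2+(1+\epsilon^{-1})\|b\|^2$, iteration, and averaging with $\E\,\Exp{-2\alpha t_i}=\lambda/(\lambda+2\alpha)$ and the same choice of $\epsilon$ making $\gamma=(1+\epsilon)\lambda/(\lambda+2\alpha)<1$ (which, as you note, is precisely $\lambda\epsilon<2\alpha$). For~\eqref{eq:moment-cont} the two arguments diverge in bookkeeping while relying on the same smallness of $\epsilon$: the paper bounds $\E_x\|X_t\|^2$ by $\E_x\|X_{\tau_{\mathcal N_t}}\|^2+\beta\alpha^{-1}$, decomposes over the events $\{\mathcal N_t=k\}$, keeps the factors $\Exp{-2\alpha(\tau_k-\tau_j)}$ from~(i), and controls the resulting double sum by a resummation that identifies (via exchangeability of the waiting times) the sum over $k$ of $\E(\one_{\{\mathcal N_t=k\}}\Exp{-2\alpha(\tau_k-\tau_j)})$ with $\E\,\Exp{-2\alpha\tau_{k-j}}$, reducing everything to the convergent geometric series $\sum_k(1+\epsilon)^k(\lambda/(\lambda+2\alpha))^k$. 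You instead retain the continuous-time decay $\Exp{-2\alpha(t-\tau_j)}$, use the restart property to compute $\E[(1+\epsilon)^{N_t-j}\,|\,\tau_j]=\Exp{\lambda\epsilon(t-\tau_j)}$ on $\{\tau_j\le t\}$, and sum over $j$ with Campbell's formula to get $\lambda\int_0^t\Exp{-(2\alpha-\lambda\epsilon)(t-s)}\dd s\le\lambda/(2\alpha-\lambda\epsilon)$. Both are valid; your version is arguably more transparent (the paper's resummation identity is really an inequality obtained by dropping the indicator, which it glosses over), at the mild cost of invoking the Poisson-process machinery explicitly. Your closing remark correctly identifies why one cannot simply plug~\eqref{eq:moment-emb} into $\E_x\|X_{\tau_{\mathcal N_t}}\|^2$, which is indeed the one trap in this lemma; note, however, that the paper does start from that very quantity and avoids the trap the same way you do, by carrying the full deterministic bound~(i) under the indicator rather than its unconditional expectation.
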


\begin{proof}
	First note that Condition~\textnormal{(C1)} implies the following estimate for the solution to the undriven equation:	\begin{equation}\label{eq:Gr}
		\|S_t(x)\|^2 \leq \Exp{-2 \alpha t }\|x\|^2 + \beta\alpha^{-1}
	\end{equation}for all $x \in \rr^d$ and~$t \geq 0$.
		Let $\epsilon > 0$ be arbitrary. Combining~\eqref{eq:disc-to-cont} and~\eqref{eq:Gr}, we find a positive constant~$C_\epsilon$ such that
		\[
			\|X_{\tau_k}\|^2 \leq (1+\epsilon)\Exp{-2\alpha t_k} \|X_{\tau_{k-1}}\|^2 + C_\epsilon (1+\|\eta_k \|^2).
		\]
		Iterating this inequality, we get~\eqref{eq:det-bound-emb}.
	 Taking expectation in~\eqref{eq:det-bound-emb} and using the independence of the sequences~$\{\eta_k\}$ and~$\{\tau_k\}$, we obtain
		\[
			\E_x \|X_{\tau_k}\|^2 \leq (1+\epsilon)^k \left(\frac{\lambda}{\lambda +2\alpha}\right)^k \|x\|^2 + C_\epsilon \sum_{j=1}^k \left(\frac{\lambda}{\lambda +2\alpha}\right)^{k-j} (1+\epsilon)^{k-j}(1+\La).
		\]
		Choosing~$\epsilon > 0$ so small that $\gamma := (1+\epsilon)\tfrac{\lambda}{\lambda + 2\alpha} \in (0,1)$ yields~\eqref{eq:moment-emb}. To prove~\eqref{eq:moment-cont}, we introduce the random variable
		\[
			\mathcal{N}_t := \max\{k \geq 0 : \tau_k \leq t\}
		\]
		and use~\eqref{eq:Gr}:
		\begin{equation}\label{eq:cont-time-decomp}
			\E_x\|X_t\|^2 \leq \E_x \|X_{\tau_{\mathcal{N}_t}}\|^2 + \beta\alpha^{-1} =\sum_{k=0}^\infty \E_x \left(\one_{\{\mathcal{N}_t = k\}}\|X_{\tau_k}\|^2\right)+  \beta\alpha^{-1} .
		\end{equation}
		 Inequality~\eqref{eq:det-bound-emb} and the independence of  $\{\eta_k\}$ and~$\{\tau_k\}$ imply
		\begin{equation}\label{eq:bound-N-is-k}
				\E_x \left(\one_{\{\mathcal{N}_t = k\}}\|X_{\tau_k}\|^2\right) \leq \gamma^k\|x\|^2 + C_\epsilon(1+\Lambda)\sum_{j=1}^k(1+\epsilon)^{k-j} \E \left(\one_{\{\mathcal{N}_t = k\}} \Exp{-2\alpha(\tau_k-\tau_j)}\right)
		\end{equation}
		and
		$$
			\sum_{k=1}^\infty\sum_{j=1}^k (1+\epsilon)^{k-j} \E \left(\one_{\{\mathcal{N}_t = k\}} \Exp{-2\alpha(\tau_k-\tau_j)}\right) = \sum_{k=0}^\infty (1+\epsilon)^k \E\left(\Exp{-2\alpha\tau_k}\right) = \sum_{k=0}^\infty (1+\epsilon)^k \left(\frac{\lambda}{\lambda +2\alpha}\right)^k,
		$$
		which is finite by our choice of~$\epsilon$. Combining this with~\eqref{eq:cont-time-decomp} and~\eqref{eq:bound-N-is-k}, we get~\eqref{eq:moment-cont} and complete  the proof~of~the lemma.
\end{proof}

As mentioned  in the introduction, the dissipativity Condition~\textnormal{(C1)} guarantees the existence of an  invariant measure. Indeed, the last lemma, combined with a Bogolyubov--Krylov argument and Fatou's lemma yields the following   result. We refer the  reader   to~\cite[\S{2.5}]{KuSh} for more details.
\begin{lemma}\label{lem:existence}
   Under Condition~\textnormal{(C1)}, the semigroup $(\PPPP^*_t)_{t \geq 0}$ admits at least one invariant measure~$\mu\invar\in \PP(\R^d)$. Moreover, any invariant measure~$\mu\invar\in \PP(\R^d)$   has a finite second~moment, that is
	 \begin{equation}\label{1.8}
		\int_{\R^d}\|y\|^2 \,\mu\invar(\dd y)< \infty.
	\end{equation}
\end{lemma}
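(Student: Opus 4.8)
The plan is to run the classical Bogolyubov--Krylov construction for existence and a truncation-plus-Fatou argument for the moment bound~\eqref{1.8}, in the spirit of~\cite[\S2.5.2]{KuSh}; only two points call for comment --- the weak Feller property of $(\PPPP_t)_{t\ge0}$, and the fact that~\eqref{1.8} for an \emph{arbitrary} invariant measure requires a genuine exponential contraction of the second moment rather than the merely uniform bound~\eqref{eq:moment-cont}.

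\emph{Existence.} Fix $x_0\in\rr^d$ and put $\mu_T:=\frac1T\int_0^T P_t(x_0,{\,\cdot\,})\,\dd t\in\PP(\rr^d)$ for $T\ge1$, which is well defined by the weak measurability of $t\mapsto P_t(x_0,{\,\cdot\,})$. By~\eqref{eq:moment-cont} there is $M=M(x_0)$ with $\int_{\rr^d}\|y\|^2\,\mu_T(\dd y)\le M$ for all $T\ge1$, so Chebyshev's inequality and the compactness of closed balls in $\rr^d$ make $\{\mu_T\}_{T\ge1}$ tight, and by Prokhorov $\mu_{T_n}\rightharpoonup\mu\invar$ along some sequence $T_n\to\infty$. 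To see that $\mu\invar$ is invariant I would first check weak Feller: for $g\in C_b(\rr^d)$ and $s\ge0$, the map $x\mapsto\PPPP_sg(x)=\E_xg(X_s)$ is continuous, because on the canonical space carrying $\{\tau_k,\eta_k\}_{k\in\nn}$, restricted to the full-probability event $\{s\ne\tau_k\text{ for all }k\}$ (the $\tau_k$ have continuous laws), the map $x\mapsto X_s$ is by~\eqref{1.1} a finite composition of the undriven flow maps and of the shifts $y\mapsto y+B\eta_k$ --- all continuous, the flow maps thanks to the smoothness of $f$ and the no-blow-up bound~\eqref{eq:Gr} --- so dominated convergence gives continuity of $\PPPP_sg$. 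Then the averaging identity
\[
\lag\PPPP_sg,\mu_{T_n}\rag=\frac1{T_n}\int_s^{T_n+s}\PPPP_tg(x_0)\,\dd t=\lag g,\mu_{T_n}\rag+\frac1{T_n}\Bigl(\int_{T_n}^{T_n+s}-\int_0^s\Bigr)\PPPP_tg(x_0)\,\dd t,
\]
whose last term is at most $2s\|g\|_\infty/T_n\to0$, together with $\PPPP_sg\in C_b(\rr^d)$ and weak convergence, gives $\lag\PPPP_sg,\mu\invar\rag=\lag g,\mu\invar\rag$; as $g$ and $s$ are arbitrary, $\PPPP_s^*\mu\invar=\mu\invar$ for all $s\ge0$.

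\emph{Finite second moment.} I would first upgrade~\eqref{eq:moment-cont} to a contraction. Combining~\eqref{1.1} and~\eqref{eq:Gr} pathwise yields, with $\mathcal{N}_t$ the number of jumps by time $t$ and $\epsilon>0$,
\[
\|X_t\|^2\le(1+\epsilon)^{\mathcal{N}_t}\Exp{-2\alpha t}\|x\|^2+C_\epsilon\sum_{j=1}^{\mathcal{N}_t}(1+\epsilon)^{\mathcal{N}_t-j}\Exp{-2\alpha(t-\tau_j)}\bigl(1+\|\eta_j\|^2\bigr)+\beta\alpha^{-1};
\]
taking $\E_x$, using the independence of $\{\eta_j\}$ and $\{\tau_k\}$, the identity $\E[(1+\epsilon)^{\mathcal{N}_t}]=\Exp{\la\epsilon t}$, and choosing $\epsilon$ so small that $(1+\epsilon)\la/(\la+2\alpha)<1$ (equivalently $c:=2\alpha-\la\epsilon>0$) --- the jump terms being summed exactly as in the proof of Lemma~\ref{lem:Lyap} --- gives constants $c>0$, $K>0$ with $\E_x\|X_t\|^2\le\Exp{-ct}\|x\|^2+K$ for all $x\in\rr^d$, $t\ge0$. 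Now let $\mu$ be any invariant measure, fix $t_0>0$, and set $h_R:=\|{\,\cdot\,}\|^2\wedge R^2$ for $R>0$. Invariance gives $\lag h_R,\mu\rag=\lag\PPPP_{nt_0}h_R,\mu\rag=\int_{\rr^d}\E_x\bigl(h_R(X_{nt_0})\bigr)\,\mu(\dd x)$ for all $n\in\nn$; since $0\le\E_x(h_R(X_{nt_0}))\le R^2\in L^1(\mu)$ and, pointwise in $x$, $\limsup_n\E_x(h_R(X_{nt_0}))\le\limsup_n\E_x\|X_{nt_0}\|^2\le K$, the reverse Fatou lemma yields $\lag h_R,\mu\rag\le K$, and letting $R\to\infty$ (monotone convergence) gives $\int_{\rr^d}\|y\|^2\,\mu(\dd y)\le K<\infty$, which is~\eqref{1.8}.

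\emph{Main obstacle.} Given Lemma~\ref{lem:Lyap}, the argument is routine; the one point that cannot be skipped is the sharpened contraction bound, since~\eqref{eq:moment-cont} with its constant $(1-\gamma)^{-1}>1$ does not close the truncation argument for a general invariant measure, so one must revisit a small part of the proof of Lemma~\ref{lem:Lyap}. The Feller property is the only other place where the jump structure of the noise enters, and there the crucial (and trivial) observation is that a fixed deterministic time almost surely avoids the continuously distributed jump times.
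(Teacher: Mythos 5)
Your proof is correct and follows exactly the Bogolyubov--Krylov-plus-Fatou route that the paper merely cites (to \cite[\S{2.5.2}]{KuSh}) without writing out. Your one substantive addition --- replacing~\eqref{eq:moment-cont}, whose coefficient $(1-\gamma)^{-1}>1$ indeed does not close the truncation argument for an \emph{arbitrary} invariant measure, by the genuine contraction $\E_x\|X_t\|^2\le \Exp{-ct}\|x\|^2+K$ obtained from $\E\bigl[(1+\epsilon)^{\mathcal{N}_t}\bigr]=\Exp{\la\epsilon t}$ and the choice $\epsilon<2\alpha/\la$ --- is exactly the right fix, and the remaining ingredients (weak Feller via the almost-sure avoidance of the jump times by a fixed deterministic time, tightness, reverse Fatou, monotone convergence) are standard and sound.
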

We now turn to an important consequence of the solid controllability Condition~\textnormal{(C3)}. The main ideas in its proof are borrowed from {\cite[\S{1}]{Sh17}  (also see the earlier works \cite[\S{2}]{AKSS07} and \cite[Ch.\,3]{KuSh})}. Such results are sometimes referred to as squeezing estimates, a concept to which we have referred in the introduction. This lemma  is used to prove a key property of the coupling constructed in the next section.

We consider the family of maps $F_k : \rr^d \times (\rr_+)^{\nn} \times (\rr^n)^\nn \to \rr^d$ defined by
\begin{equation}\label{E:fk}
	\begin{cases}
		F_0(x,\bs,  \bxi) = x, \\
			F_k(x,\bs, \bxi) = S_{s_k}(F_{k-1}(x,\bs, \bxi)) + B\xi_k
	\end{cases}
\end{equation}
for $k \in \nn$, $x\in \rr^d$, $\bs=(s_j)_{j\in\nn}\in (\rr_+)^{\nn}$, and $\bxi=(\xi_j)_{j\in\nn}\in (\rr^n)^\nn$; see Figure~\ref{fig:jump-flow}.
Because $F_k$ does not depend on $\{s_j,\xi_j\}_{j\ge k+1}$, {i.e.\ the times and displacements for kicks that happen later than the $k$-th kick,} we will often consider the domain of~$F_k$ to be~$\rr^d \times (\rr_+)^{m} \times (\rr^n)^{m}$ for some natural number~$m \geq k$.

\begin{figure}
\centering
	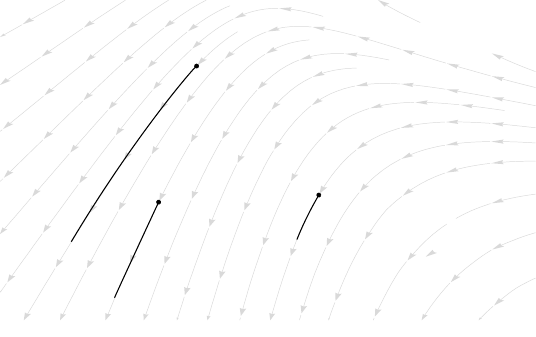
	\caption{{The map $F_k$ takes as an input a point~$x$, a sequence~$\bs$ of times and a sequence~$\bxi$ of displacement vectors and outputs the final position of a test particle which starts at~$x$, follows the integral curves of~$f$ for a time~$s_1$, is immediately displaced by~$\xi_1$, follows the integral curves of~$f$ for a time~$s_2$, is immediately displaced by~$\xi_2$, and so on until it is finally displaced by~$\xi_k$. We have sketched this for~$k=4$.}}
	\label{fig:jump-flow}
\end{figure}

\begin{lemma}\label{lem:minor-tot-var}
	{Suppose that $\hat{x}$ is as in Condition~\textnormal{(C3)}. Then,}  there exist numbers $m \in \nn$,   $r > 0$, and $p \in (0,1)$ and a non-degenerate ball\,\footnote{Here $[0,T_0]^{m}$ is endowed with the metric inherited from $\rr^{m}$.} $\rectm$ in $[0,T_0]^{m}$ such that
	\begin{equation}\label{E:18}
		\left\| F_m(x,\bs,{\cdot\,})_* (\ell^m ) -  F_m(x',\bs,{\cdot\,})_* (\ell^m) \right\|_\textnormal{var} \leq p
	\end{equation}
	for all $\bs \in \rectm$ and~$x,x'\in B(\hat x, r)$, where $F_m(x,\bs,{\cdot\,})_* (\ell^m )$ is the image of $\ell^m$ (the $m$-fold product of the law~$\ell$ with itself) under the mapping~$F_m(x,\bs,{\cdot\,}):(\rr^n)^m\to \rr^d$.
\end{lemma}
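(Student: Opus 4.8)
The plan is to deduce the total-variation squeezing estimate from Condition~(C3) by a topological-degree argument applied to a suitable family of maps, and then to pass from the deterministic image inclusion $G \subset \Phi(\KK)$ to a quantitative overlap of the pushed-forward measures $F_m(x,\bs,{\cdot\,})_*(\ell^m)$. First I would fix the data $\epsilon_0, T_0, \KK, G$ furnished by~(C3). Since $\KK \subset C([0,T_0];\rr^n)$ is compact, I would approximate its elements by piecewise-constant controls: choose $m \in \nn$ large and a mesh $0 = \sigma_0 < \sigma_1 < \dots < \sigma_m = T_0$ (with the $\sigma_i$ eventually allowed to vary in a small box $\rectm \subset [0,T_0]^m$ around the uniform mesh), and consider the map $(\xi_1,\dots,\xi_m) \mapsto F_m(\hat x, \bs, \bxi)$, where $\bs$ encodes the increments $\sigma_i - \sigma_{i-1}$. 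By continuous dependence of $S_t$ on $t$ and on the driving control — which follows from the smoothness of $f$ together with the a priori bound~\eqref{eq:Gr} — the finite-jump evolution $F_m(\hat x,\bs,{\cdot\,})$ uniformly approximates, on a compact set of $\bxi$'s corresponding to $\KK$, the map $\zeta \mapsto S_{T_0}(\hat x,\zeta)$; hence~(C3) applies and yields a non-degenerate ball $G' \subset \rr^d$ (slightly smaller than $G$) that is contained in the image $F_m(\hat x,\bs,{\cdot\,})(\mathsf{Q})$ for a fixed compact box $\mathsf{Q} \subset (\rr^n)^m$ and all $\bs$ in a small box $\rectm$; here one also uses that, $F_m(\hat x,\bs,{\cdot\,})$ being close to $S_{T_0}(\hat x,{\cdot\,})$ restricted to a discretization of $\KK$, the hypothesis of~(C3) is met with the role of $\Phi$ played by this finite-dimensional map.

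Next I would upgrade the inclusion $G' \subset F_m(\hat x,\bs,{\cdot\,})(\mathsf{Q})$ to a lower bound on the measure of preimages. Since $\ell = \mathcal{D}(\eta_k)$ has a continuous positive density on $\rr^n$, the product $\ell^m$ has a continuous positive density on $(\rr^n)^m$, so it assigns positive mass to every open subset of $\mathsf{Q}$; combined with a non-degeneracy (submersion / local-diffeomorphism) property of $F_m(\hat x,\bs,{\cdot\,})$ on $\mathsf{Q}$ — which I would arrange by choosing $\mathsf{Q}$ and the mesh so that the relevant Jacobian is non-degenerate, again available from the structure of~(C3) as used in~\cite[\S1.1]{Sh17} — one obtains a probability measure $\nu$ on $\rr^d$, absolutely continuous with bounded-below continuous density on a non-degenerate ball $G'' \subset G'$, and a constant $\delta>0$ such that $F_m(\hat x,\bs,{\cdot\,})_*(\ell^m) \geq \delta\,\nu$ in the sense of measures, uniformly in $\bs \in \rectm$. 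Finally, I would use continuity of $x \mapsto F_m(x,\bs,\bxi)$ near $\hat x$: shrinking $G''$ and $\delta$ if necessary, the same minorization $F_m(x,\bs,{\cdot\,})_*(\ell^m) \geq \tfrac{\delta}{2}\,\nu$ persists for all $x \in B(\hat x, r)$ with $r$ small, because the density of the pushforward depends continuously on the base point through the diffeomorphism and $\mathsf{Q}$ is compact. Two probability measures that both dominate $\tfrac{\delta}{2}\nu$ are at total-variation distance at most $1 - \tfrac{\delta}{2}$, which gives~\eqref{E:18} with $p := 1 - \tfrac{\delta}{2}$.

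The step I expect to be the main obstacle is the passage from the purely topological conclusion of~(C3) — the set inclusion $G \subset \Phi(\KK)$ for all $C^0$-close $\Phi$ — to a \emph{quantitative} minorization of the pushforward measure. The inclusion alone only says the image covers a ball; to convert "covers" into "the pushforward of an absolutely continuous measure has density bounded below on a ball" one needs some control on how $F_m(\hat x,\bs,{\cdot\,})$ covers $G$, i.e. a non-degeneracy of its differential so that a change-of-variables/coarea estimate is available. Making this precise while only assuming~(C3) (which is weaker than Hörmander and a priori gives no rank condition) is the delicate point; the resolution, following~\cite{Sh17}, is to note that~(C3) can be applied not to a single map but to a whole finite-dimensional \emph{family}, and one can build into the construction an auxiliary set of parameters along which the map is a genuine submersion onto a ball, so that the topological covering statement localizes to a genuine surjection with non-vanishing Jacobian on a suitable slice. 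The remaining verifications — continuous dependence of $S_t$ on $(t,\zeta)$, measurability, the bound $\|\mu_1 - \mu_2\|_{\textnormal{var}} \le 1 - \delta/2$ when $\mu_i \ge (\delta/2)\nu$ — are routine.
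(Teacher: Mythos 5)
Your proposal has the same skeleton as the paper's proof: discretize the controls in $\KK$ into $m$ jumps, use the uniform approximation of $\zeta\mapsto S_{T_0}(\hat x,\zeta)$ by $F_m(\hat x,\hat\bs,\iota_m\,\cdot\,)$ to apply Condition~(C3) and conclude that $F_m(\hat x,\hat\bs,\KK_m)$ contains a ball, then minorize the pushforwards by a common positive measure and read off the total-variation bound $p=1-\delta$. The opening and closing steps are correct and essentially identical to the paper's Steps~1 and~3.

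The genuine gap is exactly at the point you flag as "the main obstacle", and your proposed resolution does not close it. You write that you would "arrange by choosing $\mathsf{Q}$ and the mesh so that the relevant Jacobian is non-degenerate" and that one can "build into the construction an auxiliary set of parameters along which the map is a genuine submersion". But Condition~(C3) carries no differential information whatsoever, and there are no auxiliary parameters to build in: the only variables available are the jumps $\bxi$ themselves, and nothing in the construction lets you \emph{choose} the mesh or the box so as to force a rank condition. The correct mechanism, which the paper uses, is Sard's theorem: since $F_m(\hat x,\hat\bs,\,\cdot\,)$ is smooth and its image of the compact set $\KK_m$ contains a ball (a set of positive Lebesgue measure in $\rr^d$), the set of critical values being Lebesgue-null forces the existence of a regular value in that ball, hence of a point $\hat u\in\KK_m$ at which $D_{\bxi}F_m(\hat x,\hat\bs,\,\cdot\,)$ has full rank. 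Only \emph{after} this does the change-of-variables argument (the paper's Lemma~\ref{lem:image-meas}, with joint continuity in the parameters $(x,\bs)$) deliver the uniform lower bound $F_m(x,\bs,\,\cdot\,)_*(\ell^m)(\dd y)\ge\psi((x,\bs),y)\,\dd y$ for $(x,\bs)$ near $(\hat x,\hat\bs)$, which is the minorization you need. A related minor inaccuracy: the rank condition is obtained at a single point $\hat u$ (and hence, by continuity, on a neighbourhood of $(\hat x,\hat\bs,\hat u)$), not on a whole compact box $\mathsf{Q}$, and that is all the argument requires since $\ell^m$ has a positive continuous density everywhere. With Sard's theorem inserted at this juncture, the rest of your argument goes through as written.
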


\begin{proof}
	Let us fix $\epsilon_0$, $\KK$, and $G$ as in Condition~\textnormal{(C3)}. To simplify the presentation, we assume that   $T_0=1$.
	For any
  $m \in \nn$ and $\zeta\in C([0,1];\rr^n)$, let $\iota_m(\zeta):[0,1]\to \rr^n$ be the step function
	\[
		\iota_m(\zeta)=  \sum_{j=0}^{m-1}  \one_{\left[\frac {j}m,\frac{j+1}{m}\right)} \int_0^{\frac jm}\zeta(s)\d s,
	\] and    let
	$\KK_m$ be the set $\iota_m (\KK)$. {If~$\zeta$ is a continuous function which allows the system to be controlled from~$\hat x$ to some target in time~$1$, then~$\iota_m(\zeta)$ is a discretization in time of the antiderivative of~$\zeta$ and we expect that feeding its jump discontinuities to~$F_m$ would result in a final position which is close to the target if~$m$ is large enough. With this in mind, we often identify the function~$\iota_m(\zeta)$ with the $m$-tuple of vectors in~$\rr^n$ consisting of its jumps at the times~$\tfrac 1m, \tfrac 2m, \dotsc, \tfrac mm$.}

	We proceed in three steps. We first show that Condition~\textnormal{(C3)} implies that the set $F_m(\hat x, \hat \bs, \KK_m)$ contains a ball in~$\rr^d$.
	Then, combining this with Sard's theorem and some properties of images of measures under regular mappings, we show   a uniform lower bound on $ F_m(x,\bs,{\cdot\,})_* (\ell^m)$ for~$(x,\bs)$ close enough to~$(\hat x,\hat \bs)$ where $ \hat \bs := (\tfrac{1}{m}, \dotsc, \tfrac{1}{m}) \in [0,1]^{m}$.
	Finally, from this uniform lower bound  we derive   the desired estimate in total variation.

	\smallskip

	\noindent\textit{Step 1: Solid controllability.}
		Let $S_T$ be the mapping defined by~\eqref{0.4}.
		By the compactness of~$\KK$, for any $\epsilon>0$,  there exists $m_0(\epsilon) \in \nn$ such that
		$$
		   \sup_{\zeta\in \KK} \Big\|\iota_m\zeta-\int_0^\cdot \zeta(s)\d s\Big\|_{L^\infty([0,1],\rr^n)}\le \epsilon
		$$
		whenever $m \geq m_0(\epsilon)$. Hence, taking $m \geq m_0(\epsilon)$ for sufficiently small~$\epsilon$, we have
		$$
			\sup_{\zeta \in \KK} \| F_m(\hat x, \hat \bs, \iota_m\zeta) - S_1(\hat x,\zeta)\| \leq  \epsilon_0,
		$$
		{where we use the aforementioned identification of functions in~$\KK_m$ with $m$-tuples of displacement vectors in~$\rr^n$}.
		Using the continuity of $F_m(\hat x, \hat \bs, \iota_m \cdot): \KK \to \rr^d $ and Condition~\textnormal{(C3)}, we conclude that $F_m(\hat x, \hat \bs, \KK_m)$ contains a ball in~$\rr^d$.
		Until the end of the proof, we fix~$m \geq m_0(\epsilon)$ for such a small~$\epsilon$.

	\smallskip

	\noindent\textit{Step 2: Uniform lower bound.}
		We want to apply Lemma~\ref{lem:image-meas}  with  $\XXXX ={B(\hat x,1)} \times [0,1]^{m}$, $\YYYY = \rr^d$, and $\ZZZZ = (\rr^{n})^{m}$ and   the map $F_m : \XXXX \times \ZZZZ \to \YYYY$ as before.
		As $F_m(\hat x, \hat \bs, \KK_m)$ contains a ball in~$\rr^d$, Sard's theorem yields the existence of a point $\hat\zzzz \in \KK_m \subset \ZZZZ$ in which the derivative $D_{\bxi} F_m(\hat x, \hat \bs,{\cdot\,})$ has full rank.
		Hence, by Lemma~\ref{lem:image-meas}, there exists a continuous function $\psi : \XXXX \times \YYYY \to \rr_+$ and a radius~$r_m > 0$ such that
		\[
			\psi\left((\hat x,\hat \bs), F_m(\hat x, \hat \bs, \hat\zzzz)\right) > 0
		\]
		and
		\[
		 	\left(   F_m(x, \bs,{\cdot\,})_*(\ell^m)\right)(\d y)  \geq \psi\left((x,\bs),\yyyy\right) \d\yyyy
		\]
		(as measures, with $y$ ranging over~$\rr^d$) whenever $ x\in B(\hat x, r_m)$ and $\bs\in B_{\rr^m}(\hat \bs, r_m)$.

	\smallskip

	\noindent\textit{Step 3: Estimate in total variation.}
		Shrinking $r_m$ if necessary, Step 2 yields positive numbers~$\epsilon_{m,1}$ and~$\epsilon_{m,2}$ and a non-degenerate ball~$\rectm \subset  [0,1]^{m}$ such that
		\[
			F_m(x, \bs,{\cdot\,})_* (\ell^m) \wedge F_m(x', \bs,{\cdot\,})_* (\ell^m) \geq \epsilon_{m,1} \operatorname{Vol}_{\rr^d}\left({\,\cdot\,} \cap B(F_m(\hat x, \hat \bs, \hat\zzzz),\epsilon_{m,2})\right)
		\]
		whenever $x,x' \in B(\hat x, r_m)$ and $\bs \in \rectm$.
		Therefore,
		\[
			\|F_m(x,\bs,{\cdot\,})_* (\ell^m) - F_m(x',\bs,{\cdot\,})_* (\ell^m)\|_\textnormal{var} \leq 1 - \epsilon_{m,1} \epsilon_{m,2}^d \frac{\pi^{\frac d2}}{\Gamma\left(\frac d2 + 1\right)}=:p_m
		\]
		whenever  $x,x' \in B(\hat x, r_m)$ and $\bs \in \rectm$. This proves~\eqref{E:18} with $r=r_m$ and $p=p_m$.
 \end{proof}

\section{Coupling argument and exponential mixing}\label{sec:coupling}

In this section, we shall always assume that Conditions~\textnormal{(C1)}--\textnormal{(C3)} are satisfied. The Main Theorem is established by using the
  \textit{coupling method}, which consists in proving uniqueness and convergence to an invariant measure for a Markov family by using the inequality
$$
	\|P_t(x,{\cdot\,}) - P_t(x',{\cdot\,}) \|_\textnormal{var} \leq \IP\{ \stopping> t\},
$$
where $\stopping$ is a random time given by
\begin{equation}\label{E:sigma}
\stopping:=\inf\left\{ s\ge0 : Z_{u} = Z'_{u} \text{ for all } u \geq s\right\}
\end{equation}
and $(Z_t,Z'_t)_{t\geq 0}$ is any $(\rr^d\times\rr^d)$-valued random process defined  on {a space~$(\Omega,\mathcal{F},\IP_{(x,x')})$ with~$\IP_{(x,x')}(Z_t \in \Gamma) = P_t(x,\Gamma)$ and $\IP_{(x,x')}(Z'_t \in \Gamma) = P_t(x',\Gamma)$ for all $t \geq 0$ and all measurable~$\Gamma \subseteq \rr^d$.}
This inequality is of course most useful when the process~$(Z_t,Z'_t)_{t\geq 0}$, called a  \textit{coupling}, is constructed in a such a way that $\IP\{\stopping> t\}$ decays as fast as possible as $t \to \infty$, with a reasonable dependence on~$x$ and~$x'$. To do so, one usually uses at some point a general result of the type of Lemma~\ref{lem:m-c} on the existence of so-called \textit{maximal couplings} (see \cite[Chapter~3]{KuSh}).

{We first proceed to construct a coupling of two embedded discrete-time processes as introduced at the beginning of Section~\ref{sec:prelim},  but with different initial conditions: given~$x$ and~$x'$ in~$\rr^d$, we define a sequence~$(z_k,z'_k)_{k \in \nn}$ of $(\rr^d\times\rr^d)$-valued random variables on a probability space~$(\Omega,\mathcal{F},\IP_{(x,x')})$ with $\IP_{(x,x')}(z_k \in \Gamma) = \hat P_{k}(x,\Gamma)$ and $\IP_{(x,x')}(z'_k \in \Gamma) = \hat P_{k}(x',\Gamma)$ for all~$k \in \nn$ and measurable~$\Gamma \subseteq \rr^d$}.
{In this context, we call $(z_k)_{k \in \nn}$ [resp. $(z_k)_{k \in \nn}$] the first [resp.\ second] \emph{component} of the coupling $(z_k,z'_k)_{k \in \nn}$.}
The structure of the waiting times and the relation~\eqref{eq:disc-to-cont} then allow us to recover estimates for the original continuous-time process.
{The construction of this coupling is inductive and relies on the numbers~$m \in \nn$  and~$r > 0$ in Lemma~\ref{lem:minor-tot-var} and correlates the two components in a different way according to three cases}: for $j \in \nn_m^0$,
\begin{itemize}
	\item if $z_j = z'_j$, then $z_k = z'_k$ for all $k \in \nn$ with $k \geq j$;
	\item if $z_j$ and $z'_j$ are different but both in $B(\hat x, r)$, then the next~$m$ jumps are synchronous and{, given the times of these jumps,~$z_{j+m}$ and $z'_{j+m}$ are maximally coupled in the sense of} Lemma~\ref{lem:m-c};
	\item if $z_j$ and $z'_j$ are different and not both in $B(\hat x, r)$, then the next~$m$ jumps are synchronous, but the respective jump displacements are independent.
\end{itemize}
In essence, the worst-case scenario is when the initial conditions~$x$ and~$x'$ are different and very far from the origin, but the number
\begin{equation}\label{eq:def-simcompact}
	\simcompact := \min\{i \in \nn_m^0 : (z_i, z'_i) \in B(0,R) \times B(0,R)\}
\end{equation}
of jumps needed for both components to enter a large\footnote{The radius~$R$ of this compact set will be chosen to suitably fit the Lyapunov structure; cf.~Corollary~\ref{lem:R4}.} compact {set} around the origin is controlled by the Lyapunov structure inherited from~\textnormal{(C1)}. Then, the approximate controllability assumption~\textnormal{(C2)} {allows us to prove an estimate for an exponential moment of} the number
\begin{equation}\label{eq:def-simball}
	\simball := \min\{j \in \nn_m^0 : (z_j, z'_j) \in B(\hat x, r) \times B(\hat x,r)\}
\end{equation}
of jumps needed for both components to simultaneously enter~$B(\hat x, r)$. Finally, combining this with {the solid controllability assumption~\textnormal{(C3)},
we control the probability distribution of} the number
\begin{equation}\label{eq:def-simdiag}
\begin{split}
	\simdiag &:= \min\{k \in \nn_m^0 : z_k = z'_k\} \\
		&\ = \min\{k \in \nn_m^0 : z_\ell = z'_\ell \text{ for all } \ell \in \nn \text{ with } \ell \geq k\}
\end{split}
\end{equation}
of jumps after which the two components coincide.

{Alternatively, in a language which avoids the particularities of the coupling method, one could rephrase the above strategy by saying that combining (C2) and the consequence of (C3) expressed in Lemma~\ref{lem:minor-tot-var} gives a \emph{local Doeblin condition} in $B(0,R)$ which, when combined with the Lyapunov structured conferred by~(C1), yields exponential mixing by Meyn--Tweedie-type arguments~\cite{MeTw}.}

\subsection{Coupling for the embedded discrete-time process}\label{S:3.1}

In this section, {we construct a coupling~$(z_k,z_k')_{k\in\nn}$ for the embedded discrete-time process in such a way that the random time after which the two components coincide has an exponential moment which can we estimate in terms of the initial conditions} (see Proposition~\ref{prop:coincide}).

Let us fix the numbers   $m$, $r$, and $p$ as in Lemma~\ref{lem:minor-tot-var}.  The coupling is constructed    by blocks of~$m$~steps as follows.
Let $\XXXX = \rr^d \times (\rr_+)^m \times (\rr^n)^m$, $\YYYY = \rr^d$, and $\ZZZZ =\rr^d\times\rr^d\times (\rr_+)^m$.
Recall that   the functions $F_i : \XXXX \to \YYYY$ are defined by~\eqref{E:fk}
for $i = 1, \dotsc, m$. We consider 	two random probability measures
$\zzzz\in \ZZZZ\mapsto\mu(\zzzz,{\cdot\,}),\,\mu'(\zzzz,{\cdot\,})$   on~$\XXXX$ given by
\[
		\mu(\zzzz,{\cdot\,}) := \delta_{z} \times \delta_{\bs} \times \ell^m
	\qquad \text{and} \qquad
		\mu'(\zzzz,{\cdot\,}) := \delta_{z'} \times \delta_{\bs} \times \ell^m
\]
for $u=(z,z',\bs)\in \ZZZZ$, where $\delta_z$ is the Dirac measure at $z \in \rr^d$ and $\delta_{\bs}$ is the Dirac measure at $\bs \in (\rr_+)^m$.
{By Lemma~\ref{lem:m-c} applied to~$F_m$, there exist
a probability space $(\tilde\Omega,\tilde\FF,\tilde\IP)$ and measurable mappings $\xi, \xi':\ZZZZ\times \tilde\Omega\to \XXXX$ such that
\begin{gather}
		\xi(\zzzz,{\cdot\,})_* \tilde{\IP} = \delta_{z} \times \delta_{\bs} \times \ell^m, \qquad
		\xi'(\zzzz,{\cdot\,})_* \tilde{\IP} = \delta_{z'} \times \delta_{\bs} \times \ell^m,\nonumber
\intertext{and}
		\tilde{\IP}\big\{\tilde\omega: F_m(\xi(\zzzz,\tilde\omega)) \neq F_m(\xi'(\zzzz,\tilde\omega))\big\}
			= \left\|  F_m(z,\bs,{\cdot\,})_*(\ell^m) -    F_m(z',\bs,{\cdot\,})_*(\ell^m)\right\|_{\textnormal{var}} \label{eq:coupl-tot-var}
\end{gather}
for each $u=(z,z',\bs)\in \ZZZZ$.}
Replacing~$\tilde\Omega$ with a bigger space (still referred to as~$\tilde\Omega$) if necessary, we may find a third  measurable mapping $\xi'':\ZZZZ\times \tilde\Omega\to \XXXX$   with the same distribution  as~$\xi'$, but independent from~$\xi$.%
\footnote{{For example, one can take as a new~$(\tilde\Omega,\tilde\FF,\tilde\IP)$ the product of the old~$(\tilde\Omega,\tilde\FF,\tilde\IP)$ with itself and set $\xi_\textnormal{new}(u,\tilde\omega_1,\tilde\omega_2) = \xi_\textnormal{old}(u,\tilde\omega_1)$, $\xi'_\textnormal{new}(u,\tilde\omega_1,\tilde\omega_2) = \xi'_\textnormal{old}(u,\tilde\omega_1)$ and $\xi''_\textnormal{new}(u,\tilde\omega_1,\tilde\omega_2) = \xi'_\textnormal{old}(u,\tilde\omega_2)$ where $(\tilde\omega_1,\tilde\omega_2)$ is a generic element of the product of the old space with itself.}}
We set
\begin{align*}
		\RR_i(z,z',\bs,\tilde\omega) := F_i(\xi(z,z',\bs,\tilde\omega))
\end{align*}
and
\begin{align*}
	\RR'_i(z,z',\bs,\tilde\omega) :=
	\begin{cases}
		F_i(\xi(z,z',\bs,\tilde\omega))
		& \text{if } z = z', \\
		F_i(\xi'(z,z',\bs,\tilde\omega))
		& \text{if } z \neq z' \text{ both in } B(\hat x, r), \\
		F_i(\xi''(z,z',\bs,\tilde\omega))
		& \text{if } z \neq z' \text{ not both in } B(\hat x, r)
	\end{cases}
\end{align*}
for each $(z,z',\bs,\tilde\omega)\in \rr^d\times\rr^d\times (\rr_+)^m\times\tilde\Omega$ and~$i = 1, \dotsc, m$.
Now, let $\mathcal{E}^m_\la$ be the $m$-fold direct product of exponential laws with rate parameter~$\lambda$. {We denote by~$(\Omega, \mathcal{F}, \IP_{(x,x')})$  the direct product
of the probability space~$(\rr^d \times \rr^d, \mathcal{B}(\rr^d) \times \mathcal{B}(\rr^d), \delta_x \times \delta_{x'})$ with countably many copies of the probability space
\[
	 ((\rr_+)^m\times\tilde\Omega,\mathcal{B}((\rr_+)^m)\times\tilde{\mathcal{F}},\mathcal{E}^m_\la\times\tilde{\IP}),
\]
and define
the process $(z_k(\omega),z'_k(\omega))_{k\in\nn}$ inductively. First, set~$(z_0(\omega),z'_0(\omega)) = (y,y')$ where $\omega = (y,y',\omega_0, \omega_1, \dotsc) \in \Omega$ with $\omega_j=(\bs_j,\tilde \omega_j)\in (\rr_+)^m\times\tilde\Omega$,
$j = 0, 1, 2, \dotsc$, and $i = 1,\dotsc, m$. Then,}
\begin{align*}
	z_{jm+i}(\omega) &:= \RR_i(z_{jm}(\omega),z'_{jm}(\omega),\bs_j,\tilde \omega_j),\\
	z'_{jm+i}(\omega) &:= \RR'_i(z_{jm}(\omega),z'_{jm}(\omega),\bs_j,\tilde \omega_j).
\end{align*}
By construction, the pair~$( z_k,z'_k)$, $k \in \nn$ is a coupling for the embedded process:
\begin{equation}
\label{eq:indeed-hat-coupling}
	\IP_{(x,x')}\{\omega \in \Omega: z_k \in \Gamma\} = \hat{P}(x,\Gamma)
	\qquad\text{and}\qquad
	\IP_{(x,x')}\{\omega \in \Omega: z'_k \in \Gamma\} = \hat{P}(x',\Gamma)
\end{equation}
for all measurable~$\Gamma \subseteq \rr^d$.

We now state and prove two important properties of the constructed  coupling. The first one relies on~\textnormal{(C3)} and elucidates the choice of a construction by blocks of~$m$ steps with~$m$ as in Lemma~\ref{lem:minor-tot-var}. The second combines this first property and some technical consequences of Conditions~\textnormal{(C1)} and~\textnormal{(C2)} proved in Appendix~\ref{app:hitting} to establish an estimate on the time~$K$ needed for the coupling to hit the diagonal, {i.e.}~for the two coupled components to coincide; see~\eqref{eq:def-simdiag}. This will be crucial in the proof of the Main Theorem.
\begin{proposition}\label{prop:sc-to-coupling}
		There is a number $\hat p \in (0,1)$ such that
		\begin{equation}
			 \IP_{(x,x')}\left\{z_{ m} \neq z'_{ m} \right\} < \hat p
			 		\end{equation}
		for all $x,x' \in B(\hat x, r)$.
\end{proposition}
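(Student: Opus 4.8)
The plan is to unwind the construction of the coupling over the first block of~$m$ steps and reduce, by Fubini's theorem, to the total-variation estimate of Lemma~\ref{lem:minor-tot-var} averaged over the waiting times. First I would dispose of the trivial case $x = x'$, in which $z_m = z'_m$ by construction, so that the asserted probability vanishes. For $x \neq x'$ with $x, x' \in B(\hat x, r)$, the points $z = x$ and $z' = x'$ are distinct and both lie in $B(\hat x, r)$, so the definitions of $\RR_i$ and $\RR'_i$ give
\[
	z_m(\omega) = F_m\big(\xi(x,x',\bs_0,\tilde\omega_0)\big), \qquad z'_m(\omega) = F_m\big(\xi'(x,x',\bs_0,\tilde\omega_0)\big),
\]
where $\omega_0 = (\bs_0, \tilde\omega_0)$ is the first coordinate of~$\omega$, distributed according to $\mathcal{E}^m_\la \times \tilde\IP$; note that $(z_m, z'_m)$ depends on~$\omega$ only through~$\omega_0$.

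Next I would apply Fubini's theorem and the defining property~\eqref{eq:coupl-tot-var} of the maximal coupling $(\xi, \xi')$, used with $u = (x, x', \bs)$, to write
\begin{align*}
	\IP_{(x,x')}\big\{z_m \neq z'_m\big\}
		&= \int_{(\rr_+)^m} \tilde\IP\big\{ F_m(\xi(x,x',\bs,{\,\cdot\,})) \neq F_m(\xi'(x,x',\bs,{\,\cdot\,})) \big\}\, \mathcal{E}^m_\la(\dd\bs) \\
		&= \int_{(\rr_+)^m} \big\| F_m(x,\bs,{\,\cdot\,})_*(\ell^m) - F_m(x',\bs,{\,\cdot\,})_*(\ell^m) \big\|_\textnormal{var}\, \mathcal{E}^m_\la(\dd\bs).
\end{align*}
With $\rectm \subset [0,T_0]^m$ and $p \in (0,1)$ as in Lemma~\ref{lem:minor-tot-var} and $q := \mathcal{E}^m_\la(\rectm)$, I would bound the integrand by~$p$ for $\bs \in \rectm$ (this is exactly the content of that lemma, since $x, x' \in B(\hat x, r)$) and by~$1$ otherwise, obtaining
\[
	\IP_{(x,x')}\big\{z_m \neq z'_m\big\} \leq p q + (1 - q) = 1 - (1-p) q .
\]

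Finally I would check that $q > 0$: the ball~$\rectm$ is non-degenerate, hence of positive Lebesgue measure, and lies in $[0,T_0]^m$, on which the product density of $\mathcal{E}^m_\la$ is bounded below by a positive constant (the exponential density $s \mapsto \la\Exp{-\la s}$ being bounded below on $[0,T_0]$). Hence $1 - (1-p)q \in (0,1)$, and setting $\hat p := 1 - \tfrac12(1-p)q \in (0,1)$ yields the strict inequality $\IP_{(x,x')}\{z_m \neq z'_m\} < \hat p$ for all $x, x' \in B(\hat x, r)$. The argument is short once Lemma~\ref{lem:minor-tot-var} is in hand; the only steps requiring a little care are the reduction to a single block using the independence structure built into the probability space $(\Omega, \mathcal{F}, \IP)$, and the elementary observation that $\rectm$ carries positive $\mathcal{E}^m_\la$-mass.
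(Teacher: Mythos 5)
Your proof is correct and follows essentially the same route as the paper: both condition on whether the waiting-time vector lies in $\rectm$, invoke the maximal-coupling identity~\eqref{eq:coupl-tot-var} together with Lemma~\ref{lem:minor-tot-var} to bound the contribution from $\rectm$, and arrive at $1-(1-p)\,\mathcal{E}^m_\la(\rectm)$. Your additional touches (the explicit Fubini reduction to the first block, the check that $\mathcal{E}^m_\la(\rectm)>0$, and the halving trick to secure the strict inequality stated in the proposition) are welcome refinements of the same argument.
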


\begin{proof}
		With $\rectm$ as in and Lemma~\ref{lem:minor-tot-var}, the equality~\eqref{eq:coupl-tot-var} gives
		\begin{align*}
			& (\mathcal{E}^m_\la \times\tilde\IP)\left\{(\bs,\tilde\omega) : F_m(\xi(x,x',\bs,\tilde\omega)) \neq F_m(\xi'(x,x',\bs,\tilde\omega)) \right\} \\
				&\qquad \leq \mathcal{E}^m_\la(\rectm) \sup_{\bs  \in \rectm} \tilde{\IP}\left\{\tilde\omega : F_m(\xi(x,x',\bs,\tilde\omega)) \neq F_m(\xi'(x,x',\bs,\tilde\omega))\right\}  + \left(1 - \mathcal{E}^m_\la(\rectm)\right)\\
				&\qquad = \mathcal{E}^m_\la(\rectm) \sup_{\bs  \in \rectm} \|  F_m(x,\bs,{\cdot\,})_*(\ell^m) -   F_m(x',\bs,{\cdot\,})_*(\ell^m)\|_{\textnormal{var}} + \left(1 - \mathcal{E}^m_\la(\rectm)\right)
		\end{align*}
		whenever~$x$ and~$x'$ are in the ball~$B(\hat x, r)$. Therefore,
		\begin{align*}
			 \IP_{(x,x')}\left\{z_{ m} \neq z'_{ m} \right\}  \leq 1 - \mathcal{E}^m_\la(\rectm)( 1 - p)=: \hat p
		\end{align*}
		by Lemma~\ref{lem:minor-tot-var}.
\end{proof}
\begin{proposition}\label{prop:coincide}
		There are positive constants~$\theta_1$ and~$A_1$ such~that
		\begin{equation}
			\E_{(x,x')} \Exp{\theta_1  {\simdiag}} \leq A_1\left(1 + \|x\| + \|x'\|\right)
		\end{equation}for all $x,x' \in \rr^d$.
\end{proposition}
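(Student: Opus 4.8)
The plan is to reduce the estimate to a renewal argument over successive \emph{coupling attempts}, where one attempt consists in waiting for both components~$z_j,z'_j$ to lie in~$B(\hat x,r)$ and then invoking Proposition~\ref{prop:sc-to-coupling} on the ensuing block of~$m$ steps. The first ingredient is a bound on~$\simball$: there are constants~$\theta_0>0$ and~$C_0>0$ with
\[
	\E_{(x,x')}\Exp{\theta_0\simball}\le C_0\bigl(1+\|x\|+\|x'\|\bigr)\qquad\text{for all }x,x'\in\rr^d.
\]
This is where the consequences of~\textnormal{(C1)}--\textnormal{(C2)} from the Appendix enter. Applying Jensen's inequality to~\eqref{eq:moment-emb} (recalling that each marginal of the coupling is the process~$X$) shows that the $m$-block chain contracts the norm on average, $\E\bigl[\|z_{(j+1)m}\|\bigm|\mathcal F_{jm}\bigr]\le\gamma^{m/2}\|z_{jm}\|+C$, and similarly for~$z'$; together with a choice of~$R$ adapted to this Lyapunov structure (Lemma~\ref{lem:R4}), a standard supermartingale estimate yields an exponential moment of~$\simcompact$ bounded by a constant times~$1+\|x\|+\|x'\|$. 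From any point of~$B(0,R)$, Condition~\textnormal{(C2)} together with the continuity and positivity of the jump density gives a uniform lower bound on the probability of steering one component into~$B(\hat x,r)$ within a bounded number of blocks; and since the jump directions of the two components are chosen independently whenever they are not both in~$B(\hat x,r)$, both of them enter~$B(\hat x,r)$ simultaneously within a bounded number of blocks with a uniform positive probability. By the strong Markov property the number of blocks between~$\simcompact$ and~$\simball$ then has a geometric tail, and combining with the bound on~$\simcompact$ gives the displayed estimate.

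The second ingredient concerns a single attempt. If~$z_j\neq z'_j$ lie both in~$B(\hat x,r)$ at a block-time~$j$, then Proposition~\ref{prop:sc-to-coupling} and the strong Markov property give
\[
	\IP\bigl\{z_{j+m}\neq z'_{j+m}\bigm|\mathcal F_j\bigr\}\le\hat p;
\]
if the attempt fails, i.e.\ $z_{j+m}\neq z'_{j+m}$ (recall that once the two components agree they agree forever), one must wait again, but now the positions~$(z_{j+m},z'_{j+m})$ have a uniformly bounded second moment, because Lemma~\ref{lem:Lyap}(ii) applies from the bounded set~$B(\hat x,r)$. Combining this with the first ingredient and the Cauchy--Schwarz inequality, the number of blocks needed to bring both components back to~$B(\hat x,r)$ after a failed attempt again satisfies a~$\Exp{\theta_0\,\cdot}$-moment bound, now with a \emph{uniform} constant.

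It remains to iterate. For~$\theta_1\in(0,\theta_0)$ and~$N\in\nn$ set $M_N:=\sup_{x,x'}\E_{(x,x')}\Exp{\theta_1(\simdiag\wedge N)}/(1+\|x\|+\|x'\|)$, which is finite since~$\simdiag\wedge N\le N$. Running the two ingredients over a batch of~$\ell$ consecutive attempts and decomposing according to whether one of these~$\ell$ attempts succeeds --- in which case~$\simdiag$ is bounded by the time spent in the batch --- or all~$\ell$ of them fail --- in which case, by the strong Markov property at the block-time of the last attempt,~$\simdiag$ equals that time plus a fresh copy of~$\simdiag$ started from bounded-second-moment positions --- one obtains an inequality of the form
\[
	M_N\le C_\ell+q_\ell\,M_N,\qquad q_\ell\le\Exp{2\theta_1\ell m}\,C^{\theta_1/\theta_0}\,\hat p^{\ell/2},
\]
where the factor~$\hat p^{\ell/2}$ records the~$\ell$ failures (handled through Cauchy--Schwarz), the factor~$C^{\theta_1/\theta_0}$ comes from a Hölder interpolation --- with exponent~$\theta_1\ell/\theta_0$, so that one needs~$\theta_1<\theta_0/(2\ell)$ --- against the~$\Exp{\theta_0\,\cdot}$-bounds of the two ingredients for the successive return times, and the distance factors are absorbed using the uniform second-moment bound. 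Choosing first~$\ell$ large and then~$\theta_1$ small so that~$q_\ell<1$, we get~$M_N\le C_\ell/(1-q_\ell)$ uniformly in~$N$, and letting~$N\to\infty$ by monotone convergence yields $\E_{(x,x')}\Exp{\theta_1\simdiag}\le A_1(1+\|x\|+\|x'\|)$ with~$A_1:=C_\ell/(1-q_\ell)$.

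The substantive part is this last iteration. A single attempt does not suffice, since~$\hat p$ may be close to~$1$ and the return-time and second-moment constants of the two ingredients may be large; one must therefore work with a whole batch of~$\ell$ attempts and let~$\hat p^\ell$ absorb those constants, while at the same time the exponential moments of the successive return times only compose after a Hölder split, which pushes~$\theta_1$ below~$\theta_0/\ell$. Arranging these two requirements to be compatible, tracking through the strong Markov property how the bounded second moments of the restart positions feed the return-time estimates, and performing the truncation needed in order to divide by~$M_N$, are the points requiring care; conceptually the argument is routine once the two ingredients are in place.
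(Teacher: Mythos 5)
Your proposal is correct and follows essentially the same route as the paper: it combines the Appendix's exponential-moment bound on the simultaneous hitting time of $B(\hat x,r)$ (via the Lyapunov structure from (C1), the return to a large compact, and the uniform transition bound from (C2)) with the squeezing estimate of Proposition~\ref{prop:sc-to-coupling}, and then interpolates the geometric failure probability against the exponentially growing return-time moments. The only differences are bookkeeping: the paper sums directly over the successive return times $\simball_i$ with a single H\"older exponent $q$ chosen large (so that $\hat C^{1/q}\hat p^{1-1/q}<1$), whereas you batch $\ell$ attempts and close a truncated renewal inequality — the same trade-off in a different guise.
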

\begin{proof}
		Under Condition~\textnormal{(C1)},     $(x,x') \mapsto 1 + \|x\|^2 + \|x'\|^2$ is a   Lyapunov function for the coupling~$(z_k,z_k')_{k\in\nn}$. As a consequence of this, we control an exponential moment of the number~$\simcompact$ of jumps needed to enter a ball of large radius~$R$ around the origin  (see Corollary~\ref{lem:R4}).
		On the other hand, Condition~\textnormal{(C2)} guarantees the existence of a number~$M \in \nn_m$ of jumps in which transition probabilities from points in~$B(0,R)$ to the ball~$B(\hat x, r)$ are uniformly bounded from below  (see Lemma~\ref{lem:coupl-sync-near-hat}).

		Combining these results, we get the following bound on an exponential moment of the first simultaneous hitting time of the ball~$B(\hat x, r)$:
		there exist positive constants $\theta_2$ and~$A_2$ such that
		\begin{equation}\label{eq:hit-near-hat-couple}
			\E_{(x,x')} \Exp{\theta_2 \simball} \leq A_2\left(1 + \|x\|^2 + \|x'\|^2\right).
		\end{equation}
	 	This is stated and proved as Proposition~\ref{lem:hit-near-hat-couple} in the first appendix.
		Then, we   introduce a sequence of random times defined inductively by~$\simball_0 := 0$ and
		\[
			\simball_i := \min\left\{j \in \nn_m: z_j, z'_j \in B(\hat x, r) \text{ and } j > \simball_{i-1} \right\}
		\]
		for $i \geq 1$. Using the strong Markov property and applying the inequality~\eqref{eq:hit-near-hat-couple} repeatedly gives
		\begin{equation}\label{E:temps}
			\E_{(x,x')} \Exp{\theta_2 \simball_i} \leq \E \left(\Exp{\theta_2 \simball_{i-1}} \E_{(z_{\simball_{i-1}},z'_{\simball_{i-1}})} \Exp{\theta_2 \simball_1}\right) \leq {\hat C}^i \left(1 + \|x\|^2 + \|x'\|^2\right)
		\end{equation}
		for some positive constant ${\hat C}$.

		Note that Proposition~\ref{prop:sc-to-coupling} implies that~${\simdiag}$ is almost surely finite for all $x,x'\in \rr^d$. Indeed,
 		\begin{align}
			 \IP_{(x,x')}\{ {\simdiag} > \simball_i \}
				& \leq \IP_{(x,x')}\left\{z_{\simball_i + m} \neq z'_{\simball_i + m}\right\}\nonumber \\
				& = \IP_{(x,x')}\left(\left\{z_{\simball_i + m} \neq z'_{\simball_i + m} \right\} \big| \left\{z_{\simball_i} \neq z'_{\simball_i}\right\}\right) \IP_{(x,x')}\left\{z_{\simball_i} \neq z'_{\simball_i}\right\}\nonumber\\
				& \leq \hat p\, \IP_{(x,x')}\left\{z_{\simball_i} \neq z'_{\simball_i}\right\} \nonumber\\
				& \leq \hat p\, \IP_{(x,x')}\left\{z_{\simball_{i-1} + m} \neq z'_{\simball_{i-1} + m}\right\} \nonumber\\
				& \leq \hat p^i \label{E:26}
		\end{align}
		and almost-sure finiteness follows from the Borel--Cantelli lemma.
		Now, by H\"older's inequality,
		\begin{align*}
			 \E_{(x,x')} \Exp{\theta_1 {\simdiag}}
				  &  \leq 1 + \sum_{i=0}^\infty \E_{(x,x')} \left(\one_{\{\simball_i < {\simdiag} \leq \simball_{i+1}\}} \Exp{\theta_1 \simball_{i+1}}\right)
				\\ &  \leq 1 + \sum_{i=0}^\infty \left( \IP_{(x,x')}\{ {\simdiag} > \simball_i \}\right)^{1-\frac 1q} \left(\E_{(x,x')}\Exp{q \theta_1 \simball_{i+1}} \right)^{\frac 1q}
		\end{align*}
		for any $q \geq 1$. In each summand, the first term is controlled by the inequality~\eqref{E:26} and the second one
		   by~\eqref{E:temps}, provided that $ \theta_1\le \theta_2/q$:
		   \[
			\E_{(x,x')} \Exp{\theta_1 {\simdiag}}
				      \leq 1 + {\hat C}^{\frac 1q}\hat p^{\frac{1}{q}-1}\left(1 +\|x\|^2 + \|x'\|^2\right)^{\frac 1q} \sum_{i=0}^\infty \left({\hat C}^{\frac 1q} \hat p^{1-\frac{1}{q}}\right)^i.
			\]
		The proposition follows by taking $q \geq 2$ large enough that ${\hat C}^{\frac 1q} \hat p^{1-\frac{1}{q}}<1$.
\end{proof}

\subsection{Coupling for the original continuous-time process}

{Let the probability space~$(\Omega,\mathcal{F},\IP_{(x,x')})$ and the process $(z_k,z_k')$ be as in the previous subsection.
Recall that an element~$\omega$ of~$\Omega$ consists in an initial condition in~$\rr^d \times \rr^d$ and a sequence $(\bs_j,\tilde \omega_j)_{j\in\nn}$ of elements in~$(\rr_+)^m \times \tilde{\Omega}$ for some other probability space~$\tilde{\Omega}$ we have constructed. Let~$\tau_{jm+i}(\omega)$ be the positive real obtained by summing all the entries of~$\bs_1, \bs_2, \dotsc, \bs_j$ and the first~$i$ entries of $\bs_{j+1}$.}
Then, it follows from the construction of~$\IP_{(x,x')}$ that the sequence~$(\tau_k)_{k\in\nn}$ of random variables on~$(\Omega,\mathcal{F},\IP_{(x,x')})$ has independent increments distributed according to an exponential distribution with rate parameter~$\lambda$.

We define
\[
	Z_t(\omega) :=
		\begin{cases}
			z_{k}(\omega)  & \text{if } t = \tau_k(\omega), \\
			S_{t-\tau_k(\omega)}(z_k(\omega)) & \text{if } t \in (\tau_k(\omega),\tau_{k+1}(\omega))
		\end{cases}
		\] and \[
		Z'_t(\omega) :=
			\begin{cases}
				z'_{k}(\omega)  & \text{if } t = \tau_k(\omega), \\
				S_{t-\tau_k(\omega)}(z'_k(\omega)) & \text{if } t \in (\tau_k(\omega),\tau_{k+1}(\omega)).
			\end{cases}
\]
Then, \eqref{eq:disc-to-cont},~\eqref{eq:def-P-hat}	 and~\eqref{eq:indeed-hat-coupling} imply that~$(Z_t,Z'_t)$ is a coupling of $X_t$ and~$X'_t$.
\begin{proposition}\label{prop:almost-main}
	Under Conditions~\textnormal{(C1)--(C3)}, there exist positive constants $C$ and $c$ such that
	\begin{equation}
			\IP_{(x,x')}\{\stopping   > t\}
			\leq  C (1 + \|x\|+\|x'\|) \Exp{-ct}
	\end{equation}
	for any   $x,x'\in \rr^d$ and   $t \geq 0$.
\end{proposition}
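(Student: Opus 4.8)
The starting point is the observation that the last-disagreement time~$\stopping$ is dominated by the jump time at which the embedded coupling first reaches the diagonal. Indeed, by the second line of~\eqref{eq:def-simdiag}, on any~$\omega$ we have $z_\ell(\omega) = z'_\ell(\omega)$ for every~$\ell \geq \simdiag(\omega)$; hence, from the definition of~$(Z_t,Z'_t)$, for $t = \tau_k$ with~$k \geq \simdiag$ we get $Z_t = z_k = z'_k = Z'_t$, and for $t \in (\tau_k,\tau_{k+1})$ with~$k \geq \simdiag$ we get $Z_t = S_{t-\tau_k}(z_k) = S_{t-\tau_k}(z'_k) = Z'_t$. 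Thus $Z_u = Z'_u$ for all $u \geq \tau_{\simdiag}$, so that $\stopping \leq \tau_{\simdiag}$ almost surely, and it suffices to bound $\IP_{(x,x')}\{\tau_{\simdiag} > t\}$.

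\textbf{Splitting the event.} Fix a (small) number~$\epsilon > 0$ to be chosen later and set $N = N(t) := \lfloor \epsilon t \rfloor$. Then
\[
	\IP_{(x,x')}\{\tau_{\simdiag} > t\}
		\leq \IP_{(x,x')}\{\simdiag > N\} + \IP_{(x,x')}\big(\{\simdiag \leq N\} \cap \{\tau_{\simdiag} > t\}\big).
\]
For the first term I would use Chebyshev's inequality together with Proposition~\ref{prop:coincide}:
\[
	\IP_{(x,x')}\{\simdiag > N\} \leq \Exp{-\theta_1 N}\, \E_{(x,x')}\Exp{\theta_1 \simdiag} \leq A_1 \Exp{\theta_1}\Exp{-\theta_1 \epsilon t}\big(1 + \|x\| + \|x'\|\big),
\]
which already has the desired form. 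For the second term, the key point is the \emph{pathwise} inclusion $\{\simdiag \leq N\} \cap \{\tau_{\simdiag} > t\} \subseteq \{\tau_N > t\}$, valid because $k \mapsto \tau_k$ is nondecreasing; this is what lets us sidestep the fact that~$\simdiag$ is \emph{not} independent of the waiting times. Since $\tau_N = \sum_{j=1}^N t_j$ is a sum of~$N$ i.i.d.\ exponential variables of parameter~$\la$, for any $\mu \in (0,\la)$ one has $\E\Exp{\mu \tau_N} = (\la/(\la - \mu))^N$, so Chebyshev gives
\[
	\IP_{(x,x')}\{\tau_N > t\} \leq \Exp{-\mu t}\Big(\frac{\la}{\la - \mu}\Big)^{N} \leq \Exp{-\big(\mu - \epsilon \log\frac{\la}{\la - \mu}\big) t}.
\]

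\textbf{Choice of constants.} Fix once and for all some $\mu \in (0,\la)$ (for instance $\mu = \la/2$) and then choose $\epsilon > 0$ small enough that $c_2 := \mu - \epsilon \log\frac{\la}{\la - \mu} > 0$. Setting $c_1 := \theta_1 \epsilon$ and $c := \tfrac12\min\{c_1, c_2\} > 0$, and collecting the two bounds above, we obtain $\IP_{(x,x')}\{\stopping > t\} \leq \IP_{(x,x')}\{\tau_{\simdiag} > t\} \leq C(1 + \|x\| + \|x'\|)\Exp{-ct}$ for a suitable constant~$C > 0$ depending only on $A_1, \theta_1, \la, \mu, \epsilon$. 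I do not expect any genuine obstacle here: the substantive work — controlling the number of blocks~$\simdiag$ until coincidence by a Lyapunov/controllability argument — has already been carried out in Proposition~\ref{prop:coincide}, and the only mild care needed is precisely the decoupling of~$\simdiag$ from the exponential waiting times via the deterministic inclusion, rather than a Wald-type identity. The almost-sure finiteness of~$\stopping$ used implicitly is likewise inherited from that of~$\simdiag$, established in the proof of Proposition~\ref{prop:coincide}.
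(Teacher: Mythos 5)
Your proof is correct and follows essentially the same route as the paper: reduce to the tail of~$\tau_{\simdiag}$ via the pathwise bound $\stopping \leq \tau_{\simdiag}$, then convert the exponential moment of~$\simdiag$ from Proposition~\ref{prop:coincide} into an exponential tail in~$t$ using the explicit exponential moments of~$\tau_k$. The only difference is technical and minor: where you split on $\{\simdiag \leq \lfloor \epsilon t\rfloor\}$ and use the monotonicity of $k \mapsto \tau_k$ to sidestep the dependence between~$\simdiag$ and the waiting times, the paper instead bounds $\E_{(x,x')}\Exp{c\tau_{\simdiag}}$ directly by decomposing over the events $\{\simdiag = k\}$ and applying the Cauchy--Schwarz inequality; both devices are valid and yield the same conclusion.
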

\begin{proof}
	Let ${\simdiag}$ be defined by~\eqref{eq:def-simdiag}.
	As~$\tau_k$ is a sum of~$k$ independent exponentially distributed random variables with parameter~$\lambda$, the expectation of~$\Exp{2 c \tau_k}$ can be computed explicitly for~$c$ in the interval~$(0,\tfrac 12 \lambda)$, and $\tau_{\simdiag}$ is also almost-surely finite.
	For such a number~$c$, the Cauchy--Schwarz inequality yields
	\begin{align*}
		\E_{(x,x')} \Exp{c \tau_{\simdiag}} &= \sum_{k=0}^\infty \E_{(x,x')} \left(\Exp{c \tau_k} \one_{\{{\simdiag} = k\}}\right) 
		 	\leq \sum_{k=0}^\infty \left(\E_{(x,x')} \Exp{2 c \tau_k}\right)^{\frac 12} \left(\IP_{(x,x')}{\{{\simdiag} = k\}}\right)^{\frac 12}.
	\end{align*}
	On the other hand, we control $\IP_{(x,x')}{\{{\simdiag} \geq k\}}$ by Proposition~\ref{prop:coincide} and Chebyshev's inequality. Therefore,
	\begin{align*}
		\E_{(x,x')} \Exp{c \tau_{\simdiag}}
			&\leq \sum_{k=0}^\infty \left(\frac{\lambda}{\lambda - 2c}\right)^{\frac k2} \left( \Exp{-\theta_1k}A_1(1+\|x\|+\|x'\|)\right)^{\frac 12} \\
			&\leq A_1^{\frac 12}(1+\|x\|+\|x'\|)\sum_{k=0}^\infty \left(\frac{\lambda \Exp{-\theta_1}}{\lambda - 2c}\right)^{\frac k2},
	\end{align*}
	where $\theta_1$ and $A_1$ are as in Proposition~\ref{prop:coincide}. The series will converge for $c > 0$ small enough; fix such a value of~$c$. By Chebyshev's inequality, we find~$C > 0$ such that
	\[
				\IP_{(x,x')}\{\tau_{\simdiag} > t\}
				\leq  C (1+\|x\|+\|x'\|) \Exp{-c t}
	\]
		for all $x, x' \in \rr^d$.
		By construction, we have
	$
				\stopping   \leq \tau_{{\simdiag}}
	$
	almost surely and therefore
	\[
				\IP_{(x,x')}\{\stopping > t\}
					\leq C (1+\|x\|+\|x'\|) \Exp{-c t}.
	\]
	This completes the proof of the proposition.
\end{proof}

\subsection{Concluding the proof of the Main Theorem}

In view of Lemma~\ref{lem:existence}, {if we can find constants $C > 0$ and $c> 0$ such~that
\begin{align*}
	\|\PPPP_t^* \delta_x - \PPPP_t^* \delta_{x'} \|_\textnormal{var}
	&\leq C (1+\|x\|+\|x'\|) \Exp{-c t}
\end{align*}
for all $x,x' \in \rr^d$ and all $t \geq 0$, then integrating in~$x$ against~$\mu$ and in~$x'$ against $\mu^\textnormal{inv}$ gives the desired bound~\eqref{0.7A} with a different constant~$C$.}
By construction of the coupling $(Z_t,Z'_t)_{t \geq 0}$, we have
\begin{equation*}
	(\PPPP_t g)(x) - (\PPPP_t g)(x') = \E_{(x,x')} \left( g(Z_t) - g(Z'_t)  \right)
\end{equation*}
for all~$g \in L^\infty(\rr^d)$. Therefore,
\begin{align*}
	\|\PPPP_t^* \delta_x - \PPPP_t^* \delta_{x'} \|_\textnormal{var}
	&= \frac12\sup_{ \|g\|_\infty\le 1}      |(\PPPP_t g)(x) - (\PPPP_t g)(x')|  \\
	&\leq \frac12\sup_{ \|g\|_\infty\le 1}     \E_{(x,x')} | g(Z_t) - g(Z'_t)  |
\\&
= \frac12\sup_{ \|g\|_\infty\le 1}     \E_{(x,x')} \left\{\one_{\{Z_t \neq  Z'_t \}}| g(Z_t) - g(Z'_t)  |\right\}
\\
	&\leq      \IP_{(x,x')}\{ Z_t \neq  Z'_t \}
	\leq \IP_{(x,x')}\{ \stopping > t\}
\end{align*}
for all $x,x' \in \rr^d$ and   $t \geq 0$, and the result follows from Proposition~\ref{prop:almost-main}.

\section{Applications}

In this section, we apply the Main Theorem to the Galerkin approximations of \textsc{pde}s and to stochastically driven quasi-harmonic networks. For the Galerkin approximations we give a detailed derivation of   the controllability conditions and in the case of the networks we appeal to the results   obtained in~\cite{Ra18}. Before we do so, we briefly discuss the solid controllability assumption~\textnormal{(C3)}.

\subsection{Criteria for solid controllability}\label{A:B}

The notion of solid controllability was introduced by  Agrachev and Sarychev  in~\cite{AS05} (see also the survey~\cite{AS-2008}) in the context of the controllability of the 2D Navier--Stokes and Euler systems. It has been used in~\cite{AKSS07} to prove the existence of density for finite-dimensional projections of the laws of the solutions of randomly forced \textsc{pde}s. In~\cite{Sh17}, solid controllability is used to establish exponential mixing for some  random dynamical systems in a compact space, and in~\cite{Ra18}, for some classes of quasi-harmonic networks of oscillators driven by a degenerate Brownian motion. {It is the degeneracy allowed by this condition which sets our work apart from previous works on \textsc{sde}s driven by compound Poisson processes (that are too numerous to be cited here).}

{We compare it to two related well-known properties,} which might be more straightforward to check in some applications.
\begin{itemize}
 	\item[(C$3'$)]   \textit{Continuous  exact  controllability  from}~$\hat x$: there exists a nondegenerate closed ball $D \subset \rr^d$, a time $T_0>0$,  and a continuous function $\Psi: D \to C([0,T_0];\rr^n)$ such that $S_{T_0}(\hat x, \Psi(x)) = x$ for all $x \in D$.
 	\item[(C$3''$)] \textit{Weak H\"ormander condition at}~$\hat x$: the vector space spanned by the    family of vector fields
 	\begin{equation}\label{Lie-space}
 		 \left\{V_0,\, [V_1, V_2], \,\, [V_1,[V_2,V_3]],\,\, \dotsc : V_0 \in \mathbb{B},\,\, V_1, V_2,\dotsc \in \mathbb{B}\cup\{f\}\right\}
 	\end{equation}
  at the point~$\hat x$ coincides with $\rr^d$, where~$\mathbb{B}$ is  the set of constant vector fields formed by the columns of the matrix~$B$  and $[U,V](x)$ is  the \textit{Lie bracket}  of  the vector fields~$U$ and~$V$ in the point~$x$:
 	\[
 		[U,V](x)=D V(x)U(x)-DU(x)V(x).
 	\]
 Here, $DU(x)$ is
 the Jacobian matrix of~$U$ at~$x$.
\end{itemize}
It is shown in~\cite[\S{2.2}]{Sh17} that~\textnormal{(C$3''$)} implies~\textnormal{(C$3'$)} with arbitrary~$T_0$, and that~\textnormal{(C$3'$)} in turn implies~\textnormal{(C3)} with the same~$T_0$; see also~\cite[\S{3.2}]{Ra18}. The first implication appeals to some ideas from geometric control theory. The second implication can be seen from a degree theory argument (or alternatively from an application of Brouwer's fixed point theorem).

The weak H\"ormander condition, also known as the parabolic H\"ormander condition, has many important applications both in control theory (e.g., see~\cite[Ch.~5]{J-1997}) and  stochastic analysis (e.g., see \cite[\S{2.3} in Ch.~2]{nualart2006} and~\cite{Ha11}).  It is often assumed to hold in \textit{all} points of the state space. For finite-dimensional control systems, it ensures the global exact controllability; for It\^o diffusions, it guarantees existence and smoothness of the density of solutions with respect to the Lebesgue measure\,---\,a major step towards proving important ergodic properties. We emphasize that we bypass the study of smoothing properties of the transition function  of our Markov process and that the conditions stated need only hold in \textit{one} point of the state space (where Condition~\textnormal{(C2)} is also satisfied).

Recall that a pair of matrices, $A : \rr^d \to \rr^d$ and $B : \rr^n \to \rr^d$, is said to satisfy the {Kalman condition} if any $x\in \rr^d$ can be written as
$
	x = By_0 + ABy_{1} + \dotsb + A^{d-1}By_{d-1}
$
for some~$y_0, \dotsc, y_{d-1} \in \rr^n$.
For a linear control system of the form $\dot X = AX + B\zeta$, the Kalman condition implies~\textnormal{(C$3''$)} in all points through a straightforward computation of the Lie brackets; see~\cite[\S{1.2--1.3}]{Cor} for other well-known implications.
When~$f$ is a linear vector field~$x \mapsto Ax$ plus a perturbation, Condition~\textnormal{(C$3''$)} can be deduced at a point~$\hat x$ far from the origin by perturbing the Kalman condition on the pair~$(A,B)$, provided that one has good control on the decay of derivatives of the perturbation along a sequence of points~\cite[\S{5}]{Ra18}.

\subsection{Galerkin approximations of randomly forced PDEs}

In this section, we apply the Main Theorem to the Galerkin approximations of the following parabolic \textsc{pde} on the  torus $\T^D:=\rr^D/2\pi\zz^D$:
\begin{equation} \label{E:PDE}
	\partial_t u(t,x)-\nu\Delta_x u(t,x)+F(u(t,x))=h(x)+\ASc(t,x),   \quad  x\in   \T^D,
\end{equation}
where~$\nu>0$ is a constant, $h: \T^D\to \R$ is a given smooth function, and~$F:\R\to \R$ is a   function of the form
\begin{equation}\label{E:non}
F(u)=a  u^p+g(u).
\end{equation}We assume that $a>0$ is an arbitrary constant,   $p\ge3$ is an odd integer,  and $g:\R\to \R$ is a smooth function satisfying the following two conditions\footnote{The results of this subsection remain true under   weaker assumptions on the function $g$. This setting is chosen for the simplicity of   presentation.}:
\begin{description}
\item[(i)] there is a constant  $C>0$ such that
\[
	|g(u)|\le C (1+|u|)^{p-1}
\]
for all $u\in \R$.

\item[(ii)]	with $g^{(p)}$ the $p$-th derivative of $g$, the following limit holds
\[
	\lim_{u \to \pm \infty} g^{(p)}(u) = 0.
\]
\end{description}
For any $N\in \nn$,  consider the following finite-dimensional subspace of $L^2(\T^D)$:
\[
	\HH_N:=\text{span}\{s_k,\, c_k : k\in \zz^D,\,\, |k|\le N\},
\]
where $s_k(x):=\sin\lag x,k\rag$, $c_k(x):=\cos\lag x,k\rag$, $\lag x,k\rag:=x_1 k_1+\ldots+x_D k_D$ and $|k| := |k_1|+\ldots+|k_{ D}|$ for any multi-index~$k=(k_1, \ldots, k_{ D})\in \zz^D$ {and any vector~$x \in \T^D$}.
In particular,~$c_0$ is the constant function~$1$.
This subspace is endowed with the scalar product $\lag\cdot,\cdot\rag_{L^2}$  and the norm $\|\cdot\|_{L^2}$     inherited from~$L^2(\T^D)$.
Let ${\mathsf P}_N$ be  the orthogonal projection onto $\HH_N$ in $L^2(\T^D)$. The Galerkin approximations of~\eqref{E:PDE} are given~by
\begin{equation} \label{E:GPDE}
	\dot u(t)-\nu\Delta u(t)+{\mathsf P}_N F(u(t))=h+\ASc(t),
\end{equation}
where~$u$ is an unknown~$\HH_N$-valued function,~$h$ is an arbitrary vector in~$\HH_N$ and~$\ASc$ is a continuous~$\HH_1$-valued function.

Let us emphasize that the space~$H_1$ for the driving~$\ASc$ is the same for any level~$N\ge1$ of approximation, any value of the constant~$\nu$ and any function~$g$ satisfying~(i) and~(ii).

The main interest of the example considered in this section is that the perturbation term~$g$ in~\eqref{E:non} is quite general. In particular, we may have $F(u)=0$ in a large ball, so that the weak H\"ormander condition is not necessarily satisfied at all the points of the state space.
\begin{theorem}\label{T:4.1}
	Suppose that~\textnormal{(i)} and~\textnormal{(ii)} hold. Let $(\cpoiss_t)_{t\geq 0}$ be an $H_1$-valued compound Poisson with jump distribution~$\ell$ of finite variance and possessing a positive continuous density with respect to the Lebesgue measure on~$\HH_1$.
	Then, the semigroup~$(\PPPP^*_t)_{t \geq 0}$ for the \textsc{sde}
	\[
	\d u - \nu\Delta u \d t + {\mathsf P}_N F(u) \d t = h \d t + \d Y
	\]
	in~$\HH_N$ admits a unique invariant measure~$\mu\invar \in \mathcal{P}(\HH_N)$. Moreover, it is exponentially mixing in the sense that~\eqref{0.7A} holds for some constants~$C > 0$ and~$c > 0$, any  measure  $\mu\in\mathcal{P}(\HH_N)$, and any time~$t \geq 0$.
\end{theorem}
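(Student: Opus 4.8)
The plan is to realise the Galerkin system~\eqref{E:GPDE} as an instance of~\eqref{0.1} on $\rr^d$ with $d=\dim\HH_N$ and $n=\dim\HH_1$, by setting
\[
 f(u):=\nu\Delta u-{\mathsf P}_N F(u)+h,\qquad B:\HH_1\hookrightarrow\HH_N\ \text{the inclusion,}
\]
and taking the driving process to be the given $\HH_1$-valued compound Poisson $(\cpoiss_t)$. The hypothesis on $\mathcal{D}(\eta_1)$ in the statement is exactly the one demanded by the Main Theorem, so it remains only to check Conditions~\textnormal{(C1)--(C3)} for this $f$ and $B$.

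Checking~\textnormal{(C1)} is routine and uses the sign of the leading nonlinearity. Pairing $f(u)$ with $u$ in $L^2(\T^D)$, the diffusion term gives $\nu\langle\Delta u,u\rangle_{L^2}=-\nu\|\nabla u\|_{L^2}^2\le 0$, and, since $u\in\HH_N$,
\[
 \langle{\mathsf P}_N F(u),u\rangle_{L^2}=\langle F(u),u\rangle_{L^2}=a\|u\|_{L^{p+1}}^{p+1}+\int_{\T^D}g(u)\,u\,\dd x .
\]
By the growth bound~\textnormal{(i)} and Young's inequality one absorbs $\big|\int g(u)u\big|$ into $\tfrac a2\|u\|_{L^{p+1}}^{p+1}$ up to an additive constant, and $\langle h,u\rangle_{L^2}$ into $\delta\|u\|_{L^2}^2$ for $\delta$ as small as one likes; finally, since $\T^D$ has finite measure, $\|u\|_{L^2}\lesssim\|u\|_{L^{p+1}}$, so $-\tfrac a2\|u\|_{L^{p+1}}^{p+1}\le-2\alpha\|u\|_{L^2}^2+C$. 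Collecting terms yields~\eqref{0.3}.

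The substantial part is~\textnormal{(C2)} and~\textnormal{(C3)}, and here I would fix the base point $\hat x\in\HH_N$ to have \emph{large amplitude} (i.e.\ $\min_{y\in\T^D}|\hat x(y)|$ as large as we wish) and to be in generic position, say with all its Fourier coefficients nonzero. The point of the first requirement is that, by assumption~\textnormal{(ii)}, $\|g^{(p)}(\hat x)\|_{L^\infty(\T^D)}$ is then as small as we please, so that $F^{(p)}(\hat x)=a\,p!+g^{(p)}(\hat x)$ is close to the nonzero constant $a\,p!$. For~\textnormal{(C3)} I would verify the weak H\"ormander condition~\textnormal{(C$3''$)} at $\hat x$, which gives~\textnormal{(C3)} by the implications recalled in Section~\ref{A:B}. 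The iterated brackets of the constant fields $b\in\HH_1$ with $f$ are explicit: for $j\ge 2$ one has $\operatorname{ad}_b^{\,j}f(u)=-{\mathsf P}_N\big(F^{(j)}(u)\,b^{\,j}\big)$, whence at $\hat x$
\[
 \operatorname{ad}_b^{\,p}f(\hat x)=-{\mathsf P}_N\big((a\,p!+g^{(p)}(\hat x))\,b^{\,p}\big),
\]
which is within $\|g^{(p)}(\hat x)\|_{L^\infty}$ of $-a\,p!\,{\mathsf P}_N(b^{\,p})$; polarising in $b\in\HH_1$, the vectors ${\mathsf P}_N(b^{\,p})$ span ${\mathsf P}_N$ of the trigonometric polynomials of degree at most $p$. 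Higher-frequency directions are produced by brackets that also involve $f$, whose leading part $-a\,{\mathsf P}_N(u^{p})$ has full Fourier support at a generic $\hat x$; iterating this saturation step one reaches all of $\HH_N$. Since the spanning so produced is only approximate, I would finish with a genericity/perturbation argument in the spirit of~\cite[\S5]{Ra18}: the relevant Gram-type determinant is a nontrivial real-analytic function of $\hat x$, hence nonzero for $\hat x$ chosen as above, giving~\textnormal{(C$3''$)}. For~\textnormal{(C2)}, I would first use~\textnormal{(C1)}, in the form~\eqref{eq:Gr}, to drive any ball $B(0,R)$ into a fixed ball in a time uniform over the initial data, reducing the problem to approximate controllability from a fixed compact ball; from there an Agrach\'ev--Sarychev type saturation argument (cf.~\cite{AS05,AKSS07}) applies, since although the linear part $\nu\Delta$ together with $B$ controls only the $\HH_1$-modes, the monomial $a u^{p}$ propagates control, through relaxed/fast-oscillating controls and the resulting effective bracket directions ${\mathsf P}_N(F^{(j)}(u)b^{\,j})$, from $\HH_1$ to all modes up to $\HH_N$; iterating yields approximate controllability to an arbitrary target, in particular to $\hat x$, with a time that can be taken uniform over the compact ball.

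With~\textnormal{(C1)--(C3)} in hand, the Main Theorem furnishes the unique invariant measure $\mu\invar\in\PP(\HH_N)$ and the bound~\eqref{0.7A}. The main obstacle is clearly the controllability part: establishing~\textnormal{(C$3''$)} at a \emph{single} large-amplitude point\,---\,where assumption~\textnormal{(ii)} is precisely what makes the top-order bracket $\operatorname{ad}_b^{\,p}f$ essentially a nonzero constant multiple of ${\mathsf P}_N(b^{\,p})$\,---\,together with the saturation argument for~\textnormal{(C2)}; once these are carried out and combined with the (routine) dissipative estimate and the reduction to~\eqref{0.1}, the conclusion is immediate from the Main Theorem.
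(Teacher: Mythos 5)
Your overall architecture coincides with the paper's: cast the Galerkin system as an instance of~\eqref{0.1} with $f_N(u)=\nu\Delta u-{\mathsf P}_N F(u)+h$ and $B$ the embedding $\HH_1\hookrightarrow\HH_N$, verify~\textnormal{(C1)} by pairing with $u$ and using the sign of $au^p$, obtain~\textnormal{(C2)} from a global approximate controllability result of Agrachev--Sarychev type, obtain~\textnormal{(C3)} by checking the weak H\"ormander condition~\textnormal{(C$3''$)} at a point of large norm, and invoke the Main Theorem. Your~\textnormal{(C1)} computation is fine, and your plan for~\textnormal{(C2)} is the strategy of Proposition~\ref{P:4.2} and Appendix~\ref{A:D}; note, however, that the engine there is a specific fast-control limit (Lemma~\ref{B1}: $S_\delta(u_0,\delta^{-1/p}\ASconstcaux,\delta^{-1}\ASconstc)\to u_0+\ASconstc-{\mathsf P}_N\ASconstcaux^p$), which produces only the pure $p$-th-power directions ${\mathsf P}_N\ASconstcaux^p$, not the lower-order ``effective directions'' ${\mathsf P}_N(F^{(j)}(u)b^j)$, $j<p$, that you invoke.

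The genuine gap is in your verification of~\textnormal{(C$3''$)}, specifically in how you reach Fourier modes of frequency larger than $p$ when $N>p$. The brackets $\operatorname{ad}_b^{\,p}f$ with $b\in\HH_1$ only span ${\mathsf P}_N$ of the trigonometric polynomials of degree at most $p$, and your mechanism for going further\,---\,brackets ``that also involve $f$'', whose values at $\hat x$ contain $F^{(j)}(\hat x)=a\,\tfrac{p!}{(p-j)!}\,\hat x^{\,p-j}+g^{(j)}(\hat x)$ for $j<p$\,---\,is contaminated by the derivatives $g^{(j)}(\hat x)$ with $j<p$, which are \emph{not} controlled by the hypotheses: assumption~\textnormal{(ii)} only forces $g^{(p)}$ to vanish at infinity, and assumption~\textnormal{(i)} gives no pointwise control of lower derivatives. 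For the same reason the closing ``real-analytic Gram determinant'' step is unavailable: $g$ is merely smooth, so the relevant determinant is a smooth, not analytic, function of $\hat x$ and may vanish on large sets. The paper's Proposition~\ref{P:C3} avoids all of this by an induction on the frequency $|m|$ that only ever takes brackets of order exactly $p$ with \emph{constant} fields: once $c_m,s_m$ have been produced, they are re-used as bracketing directions in $[c_0,\dotsc[c_0,[c_l,[c_m,f_N]]]\dotsc]$, padded by $p-2$ copies of the constant function $c_0=1\in\HH_1$, so that the only derivative of $F$ ever evaluated is $F^{(p)}=a\,p!+g^{(p)}$; then~\eqref{eq:pth-Lie-bracket} gives $-a\,p!\,{\mathsf P}_N(c_lc_m)$ up to an error that is small exactly when $|\hat u|$ is large, and the trigonometric identities~\eqref{eq:proj-trigo-1}--\eqref{eq:proj-trigo-2} raise the frequency by one at each stage. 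You would need to replace your high-frequency step by this (or an equivalent) mechanism; as written, it fails for a general $g$ satisfying only~\textnormal{(i)}--\textnormal{(ii)}.
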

\begin{proof}
	The \textsc{sde} under consideration is of the form~\eqref{0.1} with $d=\dim\HH_N$, $n=\dim\HH_1=2D+1$, a smooth function $f_N:\HH_N\to \HH_N$  given by
 	\begin{equation}\label{E:NPDE}
 		f_N(u)= \nu \Delta u-{\mathsf P}_N F(u)+h,
 	\end{equation}
	and $B:\HH_1\to \HH_N$ the natural embedding operator.
	Let us show that    Conditions~\textnormal{(C1)}--\textnormal{(C3)} are verified.
	Using  the assumption~(i),  the fact that $s_k$  and $c_k$ are eigenfunctions of the Laplacian, and the Cauchy--Schwarz inequality, we get
	\begin{align*}
			\lag  f(u), u\rag_{L^2} &= \lag \nu \Delta u-{\mathsf P}_N F(u)+h, u\rag_{L^2} \\&\le - \nu\int_{\T^D} |u(x)|^2 \d x- C_1\int_{\T^D} |u(x)|^{p+1} \d x +C_2\\&\le-\nu\|u\|_{L^2} ^2 +C_2,
	\end{align*}where $C_1>0$ and $C_2>0$ are some constants and   $u\in \HH_N$ is arbitrary. This implies Condition~\textnormal{(C1)}.

	Condition~\textnormal{(C2)} (to all points) is a consequence of the global approximate controllability property  of Proposition~\ref{P:4.2} below, whose proof is given in Appendix~\ref{A:D}. Since it is proved in~\cite[\S{2.2}]{Sh17} that the weak H\"ormander condition implies solid controllability, Proposition~\ref{P:C3} below yields Condition~\textnormal{(C3)}.

	Thus, Conditions~\textnormal{(C1)}--\textnormal{(C3)} are satisfied and the proof of Theorem~\ref{T:4.1} is completed by applying our Main Theorem.
\end{proof}
\begin{proposition}\label{P:4.2}
	Equation~\eqref{E:GPDE} is approximately controllable: for any number~$\epsilon>0$,   any time~$T>0$, any initial condition $u_0\in \HH_N$, and any target $\hat u \in \HH_N$,   there exists a control~$\ASc \in C([0,T]; \HH_1)$ such that the solution $u$ of~\eqref{E:GPDE} with $u(0)=u_0$ satisfies
	\[
		\|u(T) - \hat u\|_{L^2}<\epsilon.
	\]
\end{proposition}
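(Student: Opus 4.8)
The plan is to prove this by the Agrachev--Sarychev saturation method, in the spirit of~\cite{AS05,AKSS07}. Equation~\eqref{E:GPDE} is a finite-dimensional control-affine system $\dot u = f_N(u) + \ASc$ on $\HH_N$, with drift $f_N$ as in~\eqref{E:NPDE} and controls $\ASc$ forced to live in the low-mode subspace $\HH_1 \subset \HH_N$. The point is that, although only the directions of $\HH_1$ are controlled directly, the leading nonlinear term $-a\,{\mathsf P}_N(u^p)$ of $f_N$ lets one enlarge the set of \emph{effectively controlled} directions: if one forces the solution very strongly, over a very short time, along directions $\eta_1,\dots,\eta_p$ already under control, then to leading order in the amplitude of the forcing the nonlinearity produces an extra drift proportional to ${\mathsf P}_N(\eta_1\cdots\eta_p)$ (pointwise product of functions on $\T^D$), while the terms $g(u)$, $\nu\Delta u$ and $h$ are of strictly lower order under this scaling. (For $N=1$ there is nothing to prove, since then $\HH_1 = \HH_N$.)

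Concretely, I would introduce the increasing chain of subspaces $\HH_1 = E_0 \subset E_1 \subset E_2 \subset \dots$ defined by
\[
    E_{j+1} := E_j + \operatorname{span}\big\{{\mathsf P}_N(\eta_1 \cdots \eta_p) : \eta_1, \dots, \eta_p \in E_j\big\}.
\]
The analytic core is the Agrachev--Sarychev "convexification and relaxation" lemma, adapted from the quadratic Navier--Stokes setting to the degree-$p$ nonlinearity ${\mathsf P}_N F$ here: for each $j$ and each initial datum $u_0$, the sets of points reachable at time $T$ from $u_0$ using controls in $C([0,T]; E_j)$ and using controls in $C([0,T]; E_{j+1})$ have the same closure. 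Its proof inserts fast large-amplitude oscillations along $E_j$-directions, and the main work is to keep the errors arising from $g$, $\Delta$ and $h$ uniformly small. Granting this lemma, and noting that with controls in the \emph{whole} space $\HH_N$ the system is trivially (indeed exactly) controllable in any prescribed time $T>0$ --- take $\ASc(t) = \dot\gamma(t) - f_N(\gamma(t))$ for a smooth path $\gamma$ from $u_0$ to $\hat u$ --- the chain stabilizes after finitely many steps and the proposition reduces to the \emph{saturation property} $\bigcup_j E_j = \HH_N$.

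Saturation is verified by an explicit computation with the product identities $c_k c_l = \tfrac12(c_{k+l} + c_{k-l})$, $s_k s_l = \tfrac12(c_{k-l} - c_{k+l})$, $s_k c_l = \tfrac12(s_{k+l} + s_{k-l})$, and the fact that $c_0 \equiv 1$ is a unit for the product. Starting from $E_0 = \operatorname{span}\{c_k, s_k : |k| \le 1\}$ and multiplying $p$ factors of which $p-m$ equal $c_0$ and at most $p$ are genuine low modes, one obtains, after forming suitable linear combinations, every mode $c_k, s_k$ with $|k| \le p$; since $p \ge 3 > 1$ this already gives $E_1 \supsetneq E_0$, and repeating the argument shows that the largest order occurring in $E_j$ is multiplied by at least $p$ at each step, so that $E_j = \HH_N$ once $p^{\,j} \ge N$. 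I expect the main obstacle to be the adaptation of the relaxation lemma to the non-quadratic nonlinearity ${\mathsf P}_N F$: one has to set up the extended system correctly, identify exactly which directions are gained at each stage, and control the errors uniformly in the scaling parameters. The combinatorial bookkeeping in the saturation step --- checking that \emph{all} Fourier modes of $\HH_N$, including the cross-modes in dimension $D > 1$, are genuinely produced --- is a secondary, more routine difficulty, while the prescribed time $T$ and the required continuity of $\ASc$ cause no trouble, since the oscillatory controls can be taken continuous and rescaled to any time window.
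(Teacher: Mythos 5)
Your proposal follows essentially the same route as the paper: an Agrachev--Sarychev saturation argument with the identical chain of subspaces generated by ${\mathsf P}_N$ of $p$-fold products, the same trigonometric verification that the chain exhausts $\HH_N$, and the same large-amplitude/short-time scaling to gain the new directions. The ``relaxation lemma'' you defer is precisely the paper's Lemma~\ref{B1}, proved there by the explicit scaling $S_\delta(u_0,\delta^{-1/p}\ASconstcaux,\delta^{-1}\ASconstc)\to u_0+\ASconstc-{\mathsf P}_N\ASconstcaux^p$ together with a Gr\"onwall-type estimate controlling the contributions of $g$, $\Delta$ and $h$.
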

\begin{proposition}\label{P:C3}
There is a number $ R>0$ such that the	 weak H\"ormander Condition~\textnormal{(C$3''$)} is satisfied for equation~\eqref{E:GPDE} at any point $\hat u\in \HH_N$ with $\|\hat u \|_{L^2}\ge  R$.
\end{proposition}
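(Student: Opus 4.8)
The plan is to verify Condition~\textnormal{(C$3''$)}, which by the discussion in Section~\ref{A:B} implies the solid controllability~\textnormal{(C3)} needed here. The key structural point is that the columns of $B$ are the Fourier modes $\{s_k,c_k:|k|\le 1\}$ viewed as \emph{constant} vector fields, so that bracketing with $f_N$ is differentiation: for a constant vector field $v$ one has $[v,f_N](u)=Df_N(u)v=\nu\Delta v-{\mathsf P}_N(F'(u)v)$, and iterating, the $j$-fold bracket $\mathrm{ad}_{v_1}\cdots\mathrm{ad}_{v_j}f_N$ evaluated at $u$ equals $-{\mathsf P}_N\!\big(F^{(j)}(u)\,v_1\cdots v_j\big)$ for every $j\ge2$. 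Since $F^{(j)}(u)=a\frac{p!}{(p-j)!}u^{p-j}+g^{(j)}(u)$ — in particular $F^{(p)}(u)=a\,p!+g^{(p)}(u)$ — I would first record these identities and reformulate \textnormal{(C$3''$)} as the statement that the span, evaluated at $\hat u$, of the constant fields in $\mathbb{B}$ together with all the iterated brackets in~\eqref{Lie-space} equals $\HH_N$.

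I would then establish this by an Agrachev--Sarychev-type saturation argument. The base step: the $p$-fold brackets above produce the $\hat u$-independent vectors $a\,p!\,{\mathsf P}_N(v_1\cdots v_p)$, whose span is the space of trigonometric polynomials of degree at most $p$ contained in $\HH_N$; together with $\mathbb{B}$ this gives us all modes $s_k,c_k$ with $|k|\le p$. The inductive step is a ``smoothing'' identity: if a vector $w$ lies in the subspace generated so far, realized as the value at $\hat u$ of some Lie-algebra element $X$, then expanding a bracket of the form $\mathrm{ad}_v\,\mathrm{ad}_X f_N$ (rewritten via the Jacobi identity as a combination of the admissible right-nested brackets) yields, at $\hat u$, a nonzero scalar multiple of ${\mathsf P}_N\!\big(F''(\hat u)\,v\,w\big)$ plus terms that are either of no higher Fourier degree or built from brackets with strictly fewer occurrences of $f_N$ — hence, by the induction hypothesis, already available. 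Letting $v$ range over $\HH_1$, the products $F''(\hat u)\,v\,w$ reach frequencies one level above those of $w$, because multiplying by a nonzero low mode spreads the spectrum; iterating, one climbs from degree $\le p$ to degree $N$ and exhausts $\HH_N$.

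The norm condition $\|\hat u\|_{L^2}\ge R$ is used only to secure the non-degeneracy that the inductive step requires, namely that the relevant coefficients built from $F^{(j)}(\hat u)$ (and from the Fourier coefficients of the powers $\hat u^{p-j}$) do not vanish in a way that collapses the construction. When $g\equiv h\equiv 0$ the entire construction is homogeneous in $\hat u$, so the exceptional set on which it fails is a proper cone — in fact, once non-degeneracy is checked at one point, it reduces to $\{0\}$. For general $g$ and $h$, assumptions~\textnormal{(i)} and~\textnormal{(ii)} give $g^{(j)}(u)=o(|u|^{p-j})$ for $j\le p-1$ and $g^{(p)}(u)\to0$, so $F^{(j)}$ behaves at infinity like its leading monomial $a\frac{p!}{(p-j)!}u^{p-j}$; this forces the exceptional set to stay within a bounded region, and it suffices to take $R$ larger than its radius (and large enough that $F''$ does not vanish identically along any $\hat u\in\HH_N$ with $\|\hat u\|_{L^2}\ge R$).

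I expect the main obstacle to be exactly the inductive smoothing identity: isolating, inside the expansion of $\mathrm{ad}_v\,\mathrm{ad}_X f_N(\hat u)$, the genuinely new Fourier frequency while showing that every remaining term either has lower Fourier degree or lies in the span produced by fewer bracket operations. A related delicate point is that a function of large $L^2$ norm need not be pointwise large, so $F'(\hat u),F''(\hat u),\dots$ cannot be treated as uniformly large functions; the non-degeneracy must be extracted from the spectral combinatorics of the products ${\mathsf P}_N(F^{(j)}(\hat u)\,v_1\cdots v_j)$ rather than from a crude size estimate.
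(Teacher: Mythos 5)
Your reduction of the iterated brackets to $-\mathsf{P}_N\bigl(F^{(j)}(\hat u)\,v_1\cdots v_j\bigr)$ and your base step (the $p$-fold brackets of controls span all modes of degree at most $p$, up to the $g^{(p)}$ correction) are correct and agree with the paper. The gap is the inductive step, precisely the point you flag as the main obstacle. You propose to climb the Fourier ladder with two-fold brackets of the form $[v,[X,f_N]]$, which forces you to (a) handle a non-constant Lie-algebra element $X$ in an inner slot, with all the Jacobi-identity bookkeeping and ``lower-order term'' accounting that entails, and (b) extract non-degeneracy from $\mathsf{P}_N\bigl(F''(\hat u)\,v\,w\bigr)$, whose coefficient $F''(\hat u)=a\,p(p-1)\,\hat u^{\,p-2}+g''(\hat u)$ depends on $\hat u$ and can degenerate: as you yourself observe, a large $L^2$ norm does not make $\hat u$ pointwise large, so $\hat u^{\,p-2}$ vanishes on the zero set of $\hat u$ and the claim that $F^{(j)}(\hat u(\cdot))$ ``behaves like its leading monomial'' is simply not available. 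Neither point is resolved, and the asserted ``spectral combinatorics'' non-degeneracy is not established; your homogeneity/cone argument for the exceptional set is tied to this $F''$-based step and does not repair it.

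The paper's proof avoids both difficulties with one trick you are missing: always take exactly $p$-fold brackets, padding the unused slots with the constant control $c_0=1\in\HH_1$. Once $c_m$ and $s_m$ have been produced (and identified with the corresponding constant vector fields, which in the case $g\equiv0$ they literally are, since the $p$-fold brackets are globally constant fields), one forms $[c_0,\dotsc,[c_0,[c_l,[c_m,f_N]]]\dotsc]$ with $p-2$ copies of $c_0$; this equals $-\mathsf{P}_N\bigl(F^{(p)}(\hat u)\,c_l c_m\bigr)=-\bigl(a\,p!+g^{(p)}(\hat u)\bigr)\mathsf{P}_N(c_l c_m)$, and the trigonometric identities then yield $c_{m\pm l}$ and $s_{m\pm l}$, so the induction advances one Fourier level per step with no Jacobi gymnastics. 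The coefficient is now the constant $a\,p!$ plus the bounded function $g^{(p)}(\hat u)$, and this is the only place where $\|\hat u\|_{L^2}\ge R$ enters: by assumption (ii) the contribution of $g^{(p)}$ becomes negligible for $\hat u$ large, and spanning $\HH_N$ is an open condition, so the unperturbed computation survives the perturbation. In particular, for $g\equiv0$ the condition holds at \emph{every} $\hat u$ (there is no exceptional cone), no $\hat u$-dependent non-degeneracy needs to be checked, and no smoothing identity is required. You should replace your inductive step by this padding argument.
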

\begin{proof}[Proof of Proposition~\ref{P:C3}]
	In view of the weak H\"ormander condition, we are interested in the nested subspaces $\{\VV_i\}_{i\geq 0}$ of $\HH_N$ defined by $\VV_0 = \HH_1$ and
	\[
			\VV_{i+1}(\hat u) := \operatorname{span}(\VV_{i} \cup \{[V, f_N](\hat u) : V \in \VV_{i}(\hat u)\}),
	\]
	where we at times identify the vector~$V \in \VV_i(\hat u)$ with the corresponding constant vector field on~$\HH_N$. Clearly, showing that $\VV_i(\hat u)=\HH_N$ for some $i$ large enough shows that the weak H\"ormander condition~\textnormal{(C3$''$)} holds in~$\hat u$.  We show in two steps that, indeed, $\VV_{(N-1)p}(\hat u)=\HH_N$ if~$\|\hat u\|_{L^2}$ is sufficiently large.

	\smallskip
	\noindent\textit{Step 1: Polynomial nonlinearity.}
		In this step, we assume that $g\equiv 0$, so that
		\begin{equation}\label{E:field}
		 		f_N(u)= \nu \Delta u-a  {\mathsf P}_N  (u^p)+h.
		\end{equation}
		In this case, Lie brackets with constant vector fields are especially straightforward to compute because~$\Delta$ is a linear operator and~$h$ is a constant vector. In particular, for any constant vector fields~$V_1$, $\dotsc$, $V_{p-2}$, $V_{p-1}$ and $V_p$,
		\begin{equation}\label{eq:pth-Lie-bracket}
			[V_1,\dotsc[V_{p-2},[V_{p-1},[V_p,f_N]]]\dotsc](\hat u) = -a \, p! \, \mathsf{P}_N(V_1 \dotsb V_{p-2} V_{p-1} V_p),
		\end{equation}
		where the product $V_1 \dotsb V_{p-2} V_{p-1} V_p$ is understood as a pointwise multiplication of functions.

		We claim that, for each multi-index~$m$ with $0 < |m| \leq N$, the vectors~$c_m$ and~$s_m$ are in~$\VV_{(|m|-1)p}(\hat u)$ for all~$\hat u \in \HH_N$.
		To start, note that if~$|l|\leq 1$, then~$c_l$ and~$s_l$ are in~$\HH_1$ and thus in~$\VV_i(\hat u)$ for each~$i$.

		Suppose now that~$c_m$ and~$s_m$ are in~$\VV_{(|m|-1)p}(\hat u)$. As noted above, for all multi-indices~$l$ with~$|l| \leq 1$, the vectors~$c_l$ and~$s_l$ are also in~$\VV_{(|m|-1)p}(\hat u)$. Therefore, combining the computation~\eqref{eq:pth-Lie-bracket} with trigonometric identities yields that
		\begin{align}
			\mathsf{P}_N c_{m \pm l}&= \mathsf{P}_N(1 \dotsb 1 \,c_lc_m) \mp \mathsf{P}_N(1 \dotsb 1 \,s_ls_m) \label{eq:proj-trigo-1} \\
				&= \tfrac{-1}{a \, p!}[c_0,\dotsc[c_0,[c_l,[c_m,f_N]]]\dotsc](\hat u) \pm \tfrac{1}{a \, p!}[c_0,\dotsc[c_0,[s_l,[s_m,f_N]]]\dotsc](\hat u)\notag
		\intertext{and}
			\mathsf{P}_N s_{m \pm l}&= \mathsf{P}_N (1 \dotsb 1 \,s_lc_m) \pm \mathsf{P}_N(1 \dotsb 1 \,c_ls_m) \label{eq:proj-trigo-2} \\
				&= \tfrac{-1}{a \, p!}[c_0,\dotsc[c_0,[s_l,[c_m,f_N]]]\dotsc](\hat u) \pm \tfrac{-1}{a \, p!}[c_0,\dotsc[c_0,[c_l,[s_m,f_N]]]\dotsc](\hat u)\notag
		\end{align}
		are in~$\VV_{(|m|-1)p+p}(\hat u)$. The result thus holds by induction on~$|m|$.

  \smallskip
  \noindent\textit{Step 2: The General case.}
		Let  $\tilde f_N$ be the vector field given by~\eqref{E:field}. If we consider the same Lie brackets as in Step 1, but now for the sum $\tilde f_N+{\mathsf P}_Ng$, the contribution of  ${\mathsf P}_Ng$ will vanish as $\hat u\to \infty$, thanks to assumption (ii). Therefore, $\VV_{(|N|-1)p}(\hat u)=\HH_N$, provided that~$\|\hat u\|_{L^2}$ is sufficiently large.
\end{proof}

\subsection{Stochastically driven networks of quasi-harmonic oscillators}\label{S:4.1}

Stochastically driven networks of oscillators play an important role in the investigation of various aspects of nonequilibrium statistical mechanics. In its simplest form, the setup can be described as follows. Consider $L$ unit masses, each labelled by an index $i \in \{1,\dotsc,L\}$ restricted to move in one dimension. Each of them is pinned by a spring of unit spring constant and, for $i \neq L$, the $i$th mass is connected to the $(i+1)$th mass by a spring of unit spring constant. The equations of motion for the positions and momenta, $(q_i,p_i)_{i=1}^L$, are the Hamilton equations
\[
\begin{cases}
		\d q_i = p_i \d t,  &
		 \d p_i = -(3q_i - q_{i-1} - q_{i+1})\d t, \qquad 1 < i < L,\\
		\d q_1 = p_1 \d t,  & \d p_1 = -(2q_{1} - q_{2})\d t,\\
		\d q_L = p_L \d t,  &  \d p_L = -(2q_{L} - q_{L-1})\d t.
\end{cases}
\]
Coupling the~$1$st [resp. the~$L$th] oscillator to a fluctuating bath with dissipation constant~$\gamma_1$ [resp.~$\gamma_L$] leads to the \textsc{sde}
\begin{equation}\label{eq:toy-network}
	\begin{cases}
			\d q_i = p_i \d t,
			&
			\d p_i = -(3q_i - q_{i-1} - q_{i+1}) \d t, \qquad\qquad\qquad 1 < i < L, \\
			\d q_1 = p_1 \d t,  & \d p_1 = -(2q_{1} - q_{2})\d t - \gamma_1 p \d t + \d Z_{1,t},\\
			\d q_L = p_L \d t,  & \d p_L = -(2q_{L} - q_{L-1})\d t  - \gamma_L p \d t + \d Z_{L,t},
	\end{cases}
\end{equation}
or variants thereof, where $Z_1$ and $Z_2$ are independent one-dimensional stochastic processes describing the fluctuations in the baths.

In the mathematical physics literature, many authors have considered nonlinear variants of this model where the thermal fluctuations\,---\,either acting on the momenta (the Langevin regime, as above) or on auxiliary degrees of freedom\,---\,are described by Gaussian white noise {i.e.}~$Z_{j,t} = \sqrt{2 \gamma_j \theta_j} W_{j,t}$, with~$W_{j,t}$ a standard Wiener process.
We refer the interested reader to~\cite{FKM65,Tr77}  for introductions to these models and discussions of their ergodic properties at thermal equilibrium; also see~\cite{JP97,JP98} for a generalization to non-Markovian models. The existence and uniqueness of the invariant measure is much more problematic out of equilibrium; see~\cite{LS77,EPR99a,EPR99b,EH00,RBT02,CEHRB}.
However, interesting phenomena pointed out in the physics literature for a single particle in a non-Gaussian bath~\cite{BC09,TC09,MQ+11,MG12} motivate a rigorous study of the mixing properties of corresponding networks. While the methods used for most of the previously cited existence and uniqueness results are not suitable to deal with compound Poisson processes, most of the ideas of~\cite{Sh17,Ra18} are. We develop the strategy to be followed in the present section.

Allowing for different spring constants and different ways of connecting the masses while staying in the Langevin regime leads us to considering the following generalization of~\eqref{eq:toy-network}.
Let~$I$ be a finite set and distinguish a nonempty subset $J \subset I$, where masses will be coupled to fluctuating baths. We use $\{\delta_i\}_{i\in I}$ [resp. $\{\delta_j\}_{j\in J}$] as the standard basis for~$\rr^I$ [resp.~$\rr^J$].
Let~$\omega: \rr^I \to \rr^I$ be a nonsingular linear map and let $\iota_j : \rr^J \to \rr^I$ be the rank-one map~$\delta_j \braket{\delta_j,{\cdot\,}}$ for each $j \in J \subset I$. The \textsc{sde}
\begin{align*}
	\d \begin{pmatrix}
		p \\ \omega q
	\end{pmatrix}
	&=
	\begin{pmatrix}
		- \sum_{j\in J} \gamma_j \iota_j\iota_j^* & -\omega^* \\
		\omega & 0
	\end{pmatrix}
	\begin{pmatrix}
		 p \\ \omega q
	\end{pmatrix} \d t
	+
	\sum_{j \in J}
	\begin{pmatrix}
		 \iota_j \\ 0
	\end{pmatrix} \d Z_j
\end{align*}
in~$\rr^{2|I|}$ then describes the positions~$q$ and momenta~$p$ of~$|I|$ masses connected to each other and pinned according to the matrix~$\omega$, with the $j$th oscillator being coupled to a Langevin bath with dissipation controlled by the constant~$\gamma_j > 0$ and fluctuations described by the process~$Z_j$.

In Proposition~\ref{prop:contr-Lang} and Corollary~\ref{cor:mix-Lang}, we consider a nonlinear version of this \textsc{sde} where the quadratic potential resulting form the springs is now  perturbed by a potential~$U : \rr^d \to \rr$. Their proofs are omitted since they are essentially the same as those of~Proposition~\ref{prop:contr-semi-M} and Corollary~\ref{cor:mix-semi-M} respectively. We start with dissipativity and controllability properties of the control system.
\begin{proposition}\label{prop:contr-Lang}
	Let $I,J,\omega$ and $(\gamma_j)_{j \in J}$ be as above. Then, the conditions
	\begin{enumerate}
		\item[\textnormal{(K)}] the pair $(\omega^*\omega,\sum_{j\in J} \iota_j\iota_j^*)$ satisfies the Kalman condition;
		\item[\textnormal{(G)}] the gradient of~$U$ is a smooth globally Lipschitz vector field growing strictly slower than $q \mapsto 1 + |q|^{\frac{1}{4|I|}}$;
		\item[\textnormal{(pH)}] there exists a sequence $\{q^{(n)}\}_{n\in\nn}$ of points in $\rr^I$, bounded away from~0, such that \[\lim_{n\to\infty} |q^{(n)}|^k \|D^{k+1}U(q^{(n)})\| = 0\]
		for each $k = 0, 1, \dotsc, d-1$;
	\end{enumerate}
	imply that the control system
	\begin{align*}
		\begin{pmatrix}
			\dot p \\ \omega \dot q
		\end{pmatrix}
		&=
		\begin{pmatrix}
			- \sum_{j\in J} \gamma_j \iota_j\iota_j^* & -\omega^* \\
			\omega & 0
		\end{pmatrix}
		\begin{pmatrix}
			 p \\ \omega q
		\end{pmatrix}
		-
		\begin{pmatrix}
			 \nabla U (q) \\ 0
		\end{pmatrix}
		+
		\sum_{j \in J}
		\begin{pmatrix}
			  \iota_j \\ 0
		\end{pmatrix} \zeta
	\end{align*}
	satisfies the conditions~\textnormal{(C1)},~\textnormal{(C2)} and~\textnormal{(C3)}.
\end{proposition}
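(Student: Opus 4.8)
The plan is to verify each of the three conditions \textnormal{(C1)}, \textnormal{(C2)}, \textnormal{(C3)} separately for the Langevin-type control system, using the stated hypotheses \textnormal{(K)}, \textnormal{(G)}, \textnormal{(pH)} as the respective engines. Throughout, write $x = (p, \omega q) \in \rr^{2|I|}$ and let $L$ denote the skew-adjointly-structured drift matrix $\begin{pmatrix} -\sum_j \gamma_j \iota_j \iota_j^* & -\omega^* \\ \omega & 0 \end{pmatrix}$, and $B \colon \rr^J \to \rr^{2|I|}$ the map $\zeta \mapsto (\sum_j \iota_j \zeta, 0)$.

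\textbf{Dissipativity \textnormal{(C1)}.} First I would compute $\braket{Lx, x}$. The off-diagonal blocks $-\omega^*$ and $\omega$ are mutually adjoint up to sign, so they contribute nothing to the quadratic form, and one is left with $\braket{Lx,x} = -\sum_{j\in J}\gamma_j |\iota_j^* p|^2 \le 0$. This is only negative \emph{semi}-definite, so bare linear dissipativity fails and one needs the standard trick of modifying the Lyapunov function by a small cross term $\braket{p, \omega q}$ (or equivalently working with $\|x\|^2 + \varepsilon \braket{p,\omega q}$): the Kalman condition \textnormal{(K)} guarantees that the semigroup $\e^{tL}$ is exponentially stable, hence that $L + L^*$ is negative definite on a suitably weighted inner product, which produces genuine coercivity $\braket{Lx,x}_{\mathrm{w}} \le -\alpha\|x\|^2_{\mathrm{w}}$. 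The nonlinear term $-(\nabla U(q), 0)$ contributes $-\braket{\nabla U(q), p}$ in the unweighted form and cross terms with the weight; by hypothesis \textnormal{(G)}, $\nabla U$ is globally Lipschitz and grows strictly slower than $|q|^{1/(4|I|)}$, so $\|\nabla U(q)\| = o(\|x\|)$ and these contributions are absorbed into the quadratic coercivity at the cost of enlarging $\beta$. One has to be a little careful that the weighted norm is equivalent to the standard one so that \eqref{0.3} holds in the form stated; this is routine.

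\textbf{Approximate controllability \textnormal{(C2)} and solid controllability \textnormal{(C3)}.} For \textnormal{(C2)} I would follow the scheme already used for the Galerkin case (Proposition~\ref{P:4.2} and its proof in the appendix): since the control enters only the $j$-labelled momentum coordinates, the idea is to first saturate, using the Kalman condition, so that iterated interaction of the controlled directions with the linear flow reaches all of $\rr^{2|I|}$, then use the dissipativity to steer any bounded initial datum toward a neighbourhood of a chosen point $\hat x$; the nonlinearity is handled perturbatively because $\nabla U$ is globally Lipschitz and mild. For \textnormal{(C3)}, the cleanest route is via condition \textnormal{(C$3''$)}, exactly as in Proposition~\ref{P:C3}: one shows that iterated Lie brackets of the constant control vector fields $\{(\iota_j,0)\}_{j\in J}$ with the drift field span $\rr^{2|I|}$ at points $\hat x$ whose $q$-component is $q^{(n)}$ for $n$ large. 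Bracketing the linear part reproduces the Kalman tower $\{B, LB, L^2B, \dots\}$, which spans by \textnormal{(K)}; the contribution of the Hessian-and-higher terms of $U$ along the sequence $q^{(n)}$ tends to zero by \textnormal{(pH)}, so the span is not destroyed and full rank persists at $\hat x = (\,0, \omega q^{(n)})$ (or any nearby point) for $n$ large. Then \textnormal{(C$3''$)} $\Rightarrow$ \textnormal{(C$3'$)} $\Rightarrow$ \textnormal{(C3)} by the implications recalled in Section~\ref{A:B}, with the same time $T_0$. Finally one must check that \textnormal{(C2)} holds \emph{at the same point} $\hat x$ from which \textnormal{(C3)} is established — but \textnormal{(C2)} here is to \emph{any} point, so this compatibility is automatic.

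\textbf{Main obstacle.} The delicate step is \textnormal{(C1)}: unlike the parabolic Galerkin example where the Laplacian gives diagonal dissipativity for free, here the linear drift is dissipative only in the degenerate directions $\iota_j^* p$, and genuine coercivity is a property of the \emph{Kalman pair}, not visible pointwise — so one has to pass to a hypocoercive/weighted norm and then make sure the nonlinear perturbation \textnormal{(G)} and the weight cross terms are still controlled. Everything else (the two controllability conditions) is essentially a transcription of the arguments already given for the Galerkin approximations, which is why the authors remark that the proof is omitted as being essentially the same as that of Proposition~\ref{prop:contr-semi-M}.
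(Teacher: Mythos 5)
Your proposal is correct and follows essentially the same route as the paper, which omits the proof of this proposition and refers to that of Proposition~\ref{prop:contr-semi-M}: there, \textnormal{(K)} yields the Kalman condition for the full drift pair $(L,B)$, hence strict negativity of the real parts of the eigenvalues of~$L$ and then \textnormal{(C1)} via a Gr\"onwall-type estimate combined with~\textnormal{(G)}, while \textnormal{(C2)} and \textnormal{(C3)} follow from the Kalman condition together with \textnormal{(G)} and \textnormal{(pH)} by invoking Propositions~3.3 and~5.1 of~\cite{Ra18} and the implication \textnormal{(C$3''$)}~$\Rightarrow$~\textnormal{(C3)}. Your weighted-norm (Lyapunov-equation) treatment of \textnormal{(C1)} and your Lie-bracket computation along the sequence $q^{(n)}$ are precisely the content of those cited arguments; the only cosmetic discrepancy is that the right template for \textnormal{(C2)} is the linear Kalman controllability argument of~\cite{Ra18} perturbed by the globally Lipschitz, sublinear $\nabla U$, rather than the saturation scheme used for the Galerkin example.
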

The exponent in the formulation of the growth condition is typically not optimal; see~\cite{Ra18} for a formulation in terms of a power related to the Kalman condition.
The following mixing result for the corresponding \textsc{sde} with Poissonian noise essentially follows from our Main Theorem (see the proof of Corollary~\ref{cor:mix-semi-M}).
\begin{corollary}\label{cor:mix-Lang}
	Under the same assumptions, if $(N_j)_{j \in J}$ is a collection of~$|J|$ independent one-dimensional compound Poisson processes with jump distributions with finite variance and continuous positive densities with respect to the Lebesgue measure on~$\rr$, then the \textsc{sde}
	\begin{align*}
		\d \begin{pmatrix}
			p \\ \omega q
		\end{pmatrix}
		&=
		\begin{pmatrix}
			- \sum_{j\in J} \gamma_j \iota_j\iota_j^* & -\omega^* \\
			\omega & 0
		\end{pmatrix}
		\begin{pmatrix}
			 p \\ \omega q
		\end{pmatrix} \d t
		 -
		 \begin{pmatrix}
		 	 \nabla U (q) \\ 0
		 \end{pmatrix} \d t
		+
		\sum_{j \in J}
		\begin{pmatrix}
			 \iota_j \\ 0
		\end{pmatrix} \delta_j \d N_j
	\end{align*}
	admits a unique stationary measure~$\mu\invar \in \mathcal{P}(\rr^{I}\oplus\rr^{I})$. Moreover, it is exponentially mixing in the sense that~\eqref{0.7A} holds for some constants~$C > 0$ and~$c > 0$, any  measure  $\mu\in\mathcal{P}(\rr^{I}\oplus\rr^{I})$, and any time~$t \geq 0$.
\end{corollary}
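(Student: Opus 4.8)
The plan is to deduce Corollary~\ref{cor:mix-Lang} from the Main Theorem by verifying that the finite-dimensional \textsc{sde} in the statement has exactly the form~\eqref{0.1} with a driving compound Poisson process satisfying the density and moment hypotheses, and that the associated control system satisfies Conditions~\textnormal{(C1)--(C3)}. The latter is precisely the content of Proposition~\ref{prop:contr-Lang}, so the corollary reduces to a bookkeeping check that the abstract framework applies. Concretely, one sets $d = 2|I|$, writes the state vector as $X = (p, \omega q) \in \rr^{I} \oplus \rr^{I} \cong \rr^{d}$, lets $f$ be the sum of the linear drift $\begin{pmatrix} -\sum_{j} \gamma_j \iota_j\iota_j^* & -\omega^* \\ \omega & 0 \end{pmatrix}$ and the nonlinear term $-\begin{pmatrix} \nabla U(q) \\ 0 \end{pmatrix}$, and lets $B : \rr^{J} \to \rr^{d}$ be the linear map $B\zeta = \sum_{j \in J} \begin{pmatrix} \iota_j \\ 0 \end{pmatrix} \delta_j \zeta_j$, i.e.\ the columns of $B$ are the vectors $(\iota_j, 0)$ scaled by $\delta_j$. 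The noise $Y$ is the $\rr^{J}$-valued compound Poisson process whose components are the $N_j$; since the $N_j$ are independent with jump laws having continuous positive densities of finite variance on $\rr$, the product jump law on $\rr^{J}$ has a continuous positive density with respect to Lebesgue measure on $\rr^{J}$ and finite variance, so the hypotheses of the Main Theorem on the jump distribution are met. One also notes that a change of coordinates by the fixed nonsingular block-diagonal linear map $\mathrm{diag}(\mathrm{Id}, \omega)$ (mapping $(p,q) \mapsto (p, \omega q)$) is a bilipschitz homeomorphism of $\rr^{d}$, so it transfers existence, uniqueness and exponential mixing in total variation between the two coordinate systems without changing the statement, which is why one may phrase everything in the $(p, \omega q)$ variables as written.

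The next step is to invoke Proposition~\ref{prop:contr-Lang}: under hypotheses~\textnormal{(K)}, \textnormal{(G)} and~\textnormal{(pH)}, the control system associated with this $f$ and $B$ satisfies \textnormal{(C1)}, \textnormal{(C2)} and \textnormal{(C3)}. Here one should remark that \textnormal{(C2)} is in fact established at every point (global approximate controllability), and in particular at the point $\hat x$ far from the origin where \textnormal{(C3)} holds via the weak H\"ormander condition \textnormal{(C$3''$)}, as produced by the Kalman perturbation argument of~\cite[\S{5}]{Ra18} using \textnormal{(pH)}; dissipativity \textnormal{(C1)} follows from the skew-adjoint structure of the conservative part of the linear drift together with the strictly sublinear growth of $\nabla U$ from \textnormal{(G)}, which makes the energy-type Lyapunov functional $\tfrac12(\|p\|^2 + \|\omega q\|^2) + U(q)$ comparable to $\|X\|^2$ and dissipated at a linear rate up to a constant. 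With \textnormal{(C1)--(C3)} in hand and the jump law as above, the Main Theorem applies verbatim and yields a unique invariant measure $\mu\invar \in \mathcal{P}(\rr^{I} \oplus \rr^{I})$ together with the bound~\eqref{0.7A} for all $\mu \in \mathcal{P}(\rr^{I} \oplus \rr^{I})$ and all $t \geq 0$, with constants $C, c > 0$ depending only on the data. This is exactly the assertion of Corollary~\ref{cor:mix-Lang}.

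Since the paper states that the proof is omitted because it is essentially identical to that of Corollary~\ref{cor:mix-semi-M}, the only genuine content I would spell out is the dictionary between the network \textsc{sde} and~\eqref{0.1}, and I would defer all controllability verifications to Proposition~\ref{prop:contr-Lang}. The main obstacle — which is entirely hidden inside Proposition~\ref{prop:contr-Lang} rather than the corollary itself — is establishing the weak H\"ormander condition \textnormal{(C$3''$)} at a point $\hat x$ with $\|\hat x\|$ large: one must compute iterated Lie brackets of the conservative linear drift (and the damping terms) with the constant vector fields given by the columns of $B$, show that when $U \equiv 0$ these brackets reproduce the Kalman iteration for the pair $(\omega^*\omega, \sum_j \iota_j\iota_j^*)$ and hence span $\rr^{d}$ under~\textnormal{(K)}, and then control the perturbative contribution of $\nabla U$ along the sequence $\{q^{(n)}\}$ using~\textnormal{(pH)} and~\textnormal{(G)} so that the spanning property survives at some $\hat x = (0, \omega q^{(n)})$ for $n$ large. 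A secondary technical point, also internal to Proposition~\ref{prop:contr-Lang}, is the global approximate controllability \textnormal{(C2)}, which for this hypoelliptic-type linear skeleton perturbed by a Lipschitz nonlinearity follows by a saturation/Agrachev--Sarychev-style or direct a priori estimate argument as in~\cite{Ra18}. For the corollary as stated, however, there is no real obstacle beyond checking that the hypotheses of the Main Theorem are literally satisfied.
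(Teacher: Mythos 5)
Your reduction to Proposition~\ref{prop:contr-Lang} for Conditions~\textnormal{(C1)--(C3)} and your dictionary between the network \textsc{sde} and~\eqref{0.1} are fine, but there is a genuine gap at the one point where this corollary actually requires an argument: the identification of the jump law of the driving noise. You claim that since the $N_j$ are independent with jump laws having continuous positive densities on~$\rr$, ``the product jump law on $\rr^J$ has a continuous positive density with respect to Lebesgue measure on $\rr^J$'' and hence the hypotheses of the Main Theorem are literally met. This is false. The superposition $Y_t = \sum_{j\in J}\delta_j N_{j,t}$ is indeed an $\rr^J$-valued compound Poisson process, but each of the $|J|$ component processes carries its own independent Poisson clock, so almost surely no two components ever jump at the same instant. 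Consequently every jump of $Y$ is supported on a single coordinate axis $\rr\,\delta_j \subset \rr^J$, and the jump distribution of $Y$ is a mixture of measures concentrated on the union of the coordinate axes --- a Lebesgue-null set as soon as $|J|\ge 2$. The product of the individual jump laws would be the law of a \emph{simultaneous} jump in all baths, an event of probability zero. So the Main Theorem does not apply verbatim, and the corollary cannot be dispatched as pure bookkeeping.

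The paper's (sketched) proof of the parallel Corollary~\ref{cor:mix-semi-M}, which it says is essentially the proof of this corollary too, addresses exactly this point: although simultaneous jumps never occur, there is positive probability that each bath produces one jump within an arbitrarily small time window, and by continuity of the flow maps $S_t$ the net effect of these $|J|$ nearly simultaneous jumps approximates a single jump distributed according to the product law, which does have a finite variance and a positive continuous density on~$\rr^J$. One must then rerun the control arguments (in particular the squeezing estimate of Lemma~\ref{lem:minor-tot-var} and the hitting-time estimate of Lemma~\ref{lem:coupl-sync-near-hat}) with this ``approximate simultaneity by continuity'' device in place of a genuinely non-degenerate jump law. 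That adaptation is the actual content of the corollary, and it is missing from your proposal; as written, your argument would only cover the case $|J|=1$ or the case of a single $\rr^J$-valued driving process with non-degenerate jump law.
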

In addition to the notation used so far, let $(\lambda_j)_{j \in J}$ be small positive numbers and let us use the shorthand~$\gamma\iota\iota^*$ for $\sum_j \gamma_j\iota_j\iota^*_j$, the shorthand~$\lambda\iota^*\iota$ for $\sum_j \lambda_j\iota_j^*\iota_j$, and so on. The \textsc{sde}
\begin{align*}
	\d \begin{pmatrix}
	 r \\	p \\ \tilde\omega q
	\end{pmatrix}
	&=
	\begin{pmatrix}
		-  \gamma\iota\iota^* & \lambda \iota \iota^* & 0 \\
		-\lambda\iota^*\iota & 0 & -\tilde\omega^* \\
		0 & \tilde\omega & 0
	\end{pmatrix}
	\begin{pmatrix}
		 r \\ p \\  \tilde\omega q
	\end{pmatrix} \d t
	+
	\begin{pmatrix}
		 \sqrt{2\gamma \theta}\iota^*\iota  \\ 0 \\ 0
	\end{pmatrix} \d W
\end{align*}
can be derived as the effective equation for the positions~$q$ and momenta~$p$ of a network of~$|I|$ masses connected to each other and pinned according to the matrix~$\omega$, with the $j$th oscillator being coupled to a classical Gaussian field at temperature~$\theta_j$ under some particular conditions on the coupling; see~\cite{EPR99a}. The~$|J|$ auxiliary degrees of freedom~$r \in \rr^J$ are introduced to make the process Markovian. The parameters~$\lambda_j$ and~$\gamma_j$ describe the coupling and dissipation for the $j$th bath. Here, the matrix $\tilde\omega$ encodes an effective quadratic potential and is such that $\tilde\omega^*\tilde\omega = \omega^*\omega - \lambda^2 \iota\iota^*$ ($\lambda$ is small), where $\omega$ encodes the original quadratic potential.
\begin{proposition}\label{prop:contr-semi-M}
	Let $I,J,\omega$ and $(\gamma_j)_{j \in J}$ be as above. Then, for $(\lambda_j)_{j \in J}$ small enough, the conditions~\textnormal{(K)}, \textnormal{(G)} and~\textnormal{(pH)} as in the previous proposition imply that the the control system
	\begin{align*}
		\begin{pmatrix}
		 \dot r \\	\dot p \\ \tilde\omega \dot q
		\end{pmatrix}
		&=
		\begin{pmatrix}
			-  \gamma\iota\iota^* & \lambda \iota \iota^* & 0 \\
			-\lambda\iota^*\iota & 0 & -\tilde\omega^* \\
			0 & \tilde\omega & 0
		\end{pmatrix}
		\begin{pmatrix}
			 r \\ p \\  \tilde\omega q
		\end{pmatrix}
		-
		\begin{pmatrix}
			 0 \\ \nabla U (q) \\ 0
		\end{pmatrix}
		+
		\begin{pmatrix}
			 \one \\ 0 \\ 0
		\end{pmatrix} \zeta
	\end{align*}
	satisfies the conditions~\textnormal{(C1)},~\textnormal{(C2)} and~\textnormal{(C3)}.
\end{proposition}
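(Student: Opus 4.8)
The plan is to verify Conditions~\textnormal{(C1)}, \textnormal{(C2)}, and \textnormal{(C3)} separately for the semi-Markovian control system, leaning on the structure of the linear part and on the hypotheses~\textnormal{(K)}, \textnormal{(G)}, and~\textnormal{(pH)} exactly as in the Langevin case.

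First, I would establish the dissipativity Condition~\textnormal{(C1)}. Writing $X = (r,p,\tilde\omega q)$, the antisymmetric blocks (those involving $\tilde\omega$ and $\pm\lambda\iota^*\iota$, $\lambda\iota\iota^*$) contribute nothing to $\braket{AX,X}$, so only the dissipative block $-\gamma\iota\iota^*$ survives, giving $\braket{AX,X} = -\sum_j \gamma_j |\iota_j^* r|^2 \le 0$. This only controls the $r$-directions, so by itself it is not coercive; the standard remedy is to replace $\|X\|^2$ by an equivalent quadratic form $\braket{QX,X}$ with a small off-diagonal perturbation (a hypocoercivity-type modification) that produces a strictly negative definite quadratic form in all variables, using that $\lambda>0$ and that $\tilde\omega$ is nonsingular. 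The nonlinear term $-\nabla U(q)$ pairs only against $p$, and by the growth hypothesis~\textnormal{(G)} it grows strictly slower than $|q|^{1+\frac{1}{4|I|}}$, hence strictly sublinearly in a suitable sense relative to $\|X\|^2$, so it is absorbed into the $\beta$ term after a Young-inequality split. This reproduces~\eqref{0.3}.

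Next, Condition~\textnormal{(C2)}, approximate controllability to a point: here I would appeal to the same argument as for Proposition~\ref{prop:contr-Lang}, namely that the linear control system with drift matrix $A$ and control matrix $B = (\one,0,0)^\top$ satisfies the Kalman condition\,---\,this is where hypothesis~\textnormal{(K)} enters, translated through the block structure: the chain of brackets $B, AB, A^2B, \dotsc$ spans everything precisely because $(\omega^*\omega, \sum_j\iota_j\iota_j^*)$, and hence $(\tilde\omega^*\tilde\omega, \iota^*\iota)$ for small $\lambda$, is Kalman. For a linear system the Kalman condition gives exact controllability between any two points in any positive time; the bounded Lipschitz perturbation $-\nabla U$ from~\textnormal{(G)} can then be handled by a standard fixed-point / perturbation argument to upgrade to global approximate controllability (indeed approximate controllability to \emph{any} point, in particular to a convenient far-away point $\hat x$). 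One chooses $\hat x$ to have large norm so that~\textnormal{(pH)} can be used at the next step.

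Finally, Condition~\textnormal{(C3)}: by the discussion in Section~\ref{A:B}, it suffices to verify the weak H\"ormander condition~\textnormal{(C$3''$)} at the chosen point $\hat x$. Along the sequence $q^{(n)}$ from~\textnormal{(pH)}, the derivatives $D^{k+1}U(q^{(n)})$ vanish fast enough that the iterated Lie brackets of the constant control fields with the drift $f(X) = AX - (0,\nabla U(q),0)^\top$ are, in the limit, those of the purely linear system $AX$; the linear system's bracket span is $\rr^{2|I|+|J|}$ by the Kalman condition~\textnormal{(K)} (again transported through the small-$\lambda$ perturbation $\tilde\omega^*\tilde\omega = \omega^*\omega - \lambda^2\iota\iota^*$), so by a limiting/continuity argument the span is full at some point $\hat x$ of large norm lying near the sequence $q^{(n)}$, which can be taken to be the same $\hat x$ to which the system is approximately controllable. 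Invoking~\cite[\S{2.2}]{Sh17} (weak H\"ormander $\Rightarrow$ solid controllability) then yields~\textnormal{(C3)}. The main obstacle is the first step: finding the correct modified Lyapunov form $Q$ that turns the merely negative semidefinite $\braket{AX,X}$ into something strictly dissipative while remaining compatible with the sublinear $\nabla U$ term; this is exactly the hypocoercivity bookkeeping that~\cite{Ra18} carries out, and the small parameters $\lambda_j$ must be taken small enough both for this estimate and for the perturbed Kalman conditions to persist.
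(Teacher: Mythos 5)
Your proposal is correct and follows essentially the same route as the paper: the paper's proof simply cites Proposition~4.1, Proposition~3.3 and Proposition~5.1 of~\cite{Ra18} (for the perturbed Kalman condition, for~(C2), and for the weak H\"ormander condition at a far point via~(pH)) together with Lemma~5.1(2) of~\cite{JPS17} (Kalman $\Rightarrow$ Hurwitz, whence~(C1) by a Gr\"onwall estimate), whereas you sketch the content of those citations by hand. Your hypocoercivity-type modified quadratic form for~(C1) is exactly the equivalent inner product implicit in the eigenvalue argument, so the two treatments coincide in substance.
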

\begin{proof}
	The Kalman condition on the pair~$(\omega^*\omega,\iota\iota^*)$ implies the Kalman condition on the pair $(\tilde\omega^*\tilde\omega,\iota\iota^*)$ if $\lambda$ is small enough. This in turn implies that the pair
	\[
	(A,B) := \left(
	\begin{pmatrix}
		-  \gamma\iota\iota^* & \lambda \iota \iota^* & 0 \\
		-\lambda\iota^*\iota & 0 & -\tilde\omega^* \\
		0 & \tilde\omega & 0
	\end{pmatrix}
	,
	\begin{pmatrix}
		 \one \\ 0 \\ 0
	\end{pmatrix}
	\right)
	\]
	also satisfies the Kalman condition; see Proposition~4.1 in~\cite{Ra18}. It follows  by Lemma~5.1(2) in~\cite{JPS17} that the eigenvalues of~$A$ then have strictly negative real part. {Combined with the growth assumption~\textnormal{(G)}, the negativity of the eigenvalues implies~\textnormal{(C1)} for a suitable inner product; see Lemma~3.1 in~\cite{Ra18}.} Proposition~3.3 in~\cite{Ra18} says that the Kalman condition on~$(A,B)$ and the growth condition~\textnormal{(G)} on~$\nabla U$ give~\textnormal{(C2)} everywhere.
	The fact that the Kalman condition on~$(A,B)$ and assumption~\textnormal{(pH)} give the weak H\"ormander condition~\textnormal{(C3'')} in one point is the content of Proposition~5.1 in~\cite{Ra18}. But, as previously mentioned, the weak H\"ormander condition implies solid controllability.
\end{proof}
Concerning the corresponding \textsc{sde} with Poissonian noise, we have the following mixing result\,---\,which again parallels that of~\cite{Ra18}\,---\,as a corollary of the controllability properties.
\begin{corollary}\label{cor:mix-semi-M}
	Under the same assumptions, if $(N_j)_{j \in J}$ is a collection of~$|J|$ independent one-dimensional compound Poisson processes with jump distributions with finite variances and continuous positive densities with respect to the Lebesgue measure on~$\rr$, then the \textsc{sde}
	\begin{align*}
		\d \begin{pmatrix}
		 r \\	p \\ \tilde\omega q
		\end{pmatrix}
		&=
		\begin{pmatrix}
			-  \gamma\iota\iota^* & \lambda \iota \iota^* & 0 \\
			-\lambda\iota^*\iota & 0 & -\tilde\omega^* \\
			0 & \tilde\omega & 0
		\end{pmatrix}
		\begin{pmatrix}
			 r \\ p \\  \tilde\omega q
		\end{pmatrix} \d t
		 -
		 \begin{pmatrix}
		 	 0 \\ \nabla U (q) \\ 0
		 \end{pmatrix} \d t
		+
		\begin{pmatrix}
			 \one \\ 0 \\ 0
		\end{pmatrix}  \sum_{j \in J} \delta_j \d N_j.
	\end{align*}
	admits a unique stationary measure~$\mu\invar \in \mathcal{P}(\rr^{J}\oplus\rr^{I}\oplus\rr^I)$. Moreover, it is exponentially mixing in the sense that~\eqref{0.7A} holds for some constants~$C > 0$ and~$c > 0$, any~$\mu\in\mathcal{P}(\rr^{J}\oplus\rr^{I}\oplus\rr^I)$, and any time~$t \geq 0$.
\end{corollary}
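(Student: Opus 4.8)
The plan is to recognise the \textsc{sde} in the statement as an instance of~\eqref{0.1} and to feed Proposition~\ref{prop:contr-semi-M} into the coupling machinery of Section~\ref{sec:coupling}. Concretely, I would write the unknown as $(r,p,\tilde\omega q)\in\rr^{J}\oplus\rr^I\oplus\rr^I$, take $f$ to be the affine vector field obtained from the block matrix appearing in Proposition~\ref{prop:contr-semi-M} by subtracting $(0,\nabla U(q),0)$, and let $B:\rr^{|J|}\to\rr^{J}\oplus\rr^I\oplus\rr^I$ be the embedding $(\one,0,0)$ onto the $r$-block. The superposition $Y_t:=(N_{j,t})_{j\in J}$ of the independent one-dimensional compound Poisson processes is itself a compound Poisson process of the form~\eqref{0.2}: its waiting times are exponential with parameter $\lambda:=\sum_{j\in J}\lambda_j$, and its jump law is the mixture $\ell=\sum_{j\in J}\tfrac{\lambda_j}{\lambda}\bigl(\bigl(\bigotimes_{i\neq j}\delta_0\bigr)\otimes\mu_j\bigr)$, where $\mu_j$ denotes the jump distribution of $N_j$. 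With these identifications the corollary's \textsc{sde} is exactly~\eqref{0.1}; Proposition~\ref{prop:contr-semi-M} supplies Conditions~\textnormal{(C1)}--\textnormal{(C3)} and, in its proof, the weak H\"ormander condition~\textnormal{(C$3''$)} at a point $\hat x$ of large norm, which I will use below; and the finite variance of each $\mu_j$ gives $\ell$ finite variance, so Lemma~\ref{lem:Lyap} and Lemma~\ref{lem:existence} apply. It therefore only remains to prove the squeezing bound $\|\PPPP_t^*\delta_x-\PPPP_t^*\delta_{x'}\|_{\textnormal{var}}\le C(1+\|x\|+\|x'\|)\Exp{-ct}$, after which one concludes uniqueness and~\eqref{0.7A} exactly as in the proof of the Main Theorem.

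The subtlety — and the step I expect to be the main obstacle — is that for $|J|\ge 2$ the jump law $\ell$ is carried by the union of the coordinate axes of $\rr^{|J|}$, hence is \emph{not} absolutely continuous with respect to Lebesgue measure on $\rr^{|J|}$, so the hypothesis of the Main Theorem does not hold literally. This defect enters the argument only through Lemma~\ref{lem:minor-tot-var}, and I would repair it by conditioning on which bath jumps at each of the $m$ steps. Write $\ell^m=\sum_{\mathbf j\in J^m}\bigl(\prod_{k=1}^m\tfrac{\lambda_{j_k}}{\lambda}\bigr)\,\mu_{j_1}\otimes\dots\otimes\mu_{j_m}$ with $\mathbf j=(j_1,\dots,j_m)$; each summand is carried by the $m$-dimensional affine subspace $E_{\mathbf j}\subset(\rr^{|J|})^m$ on which the $k$th jump is a multiple of the $j_k$th basis vector of $\rr^{|J|}$, and is absolutely continuous there with a continuous positive density. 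It then suffices to exhibit one $\mathbf j\in J^m$ and one suitable (fixed) choice of flow times for which the image of the restriction of $F_m(\hat x,\cdot,\cdot)$ to $E_{\mathbf j}$ has nonempty interior in $\rr^d$: Sard's theorem then furnishes a point $\hat\zzzz\in E_{\mathbf j}$ at which this restriction is a submersion, so its image contains a ball in $\rr^d$, the push-forward of the corresponding summand of $\ell^m$ has a continuous positive density on a smaller ball, and Steps~2 and~3 of the proof of Lemma~\ref{lem:minor-tot-var} go through with this $\hat\zzzz$ in place of the regular value produced there, delivering~\eqref{E:18}. Producing such a $\mathbf j$ for $m$ large enough is a standard accessibility statement: after reparametrisation, the map in question is the endpoint map of the control system $\dot X=f(X)+\sum_{j\in J}u_j\,(B\delta_j)$ driven by piecewise-constant controls that are active in one coordinate at a time, and the weak H\"ormander condition~\textnormal{(C$3''$)} at $\hat x$ — which involves precisely iterated Lie brackets of the columns of $B$ and of $f$ — forces its attainable set to have nonempty interior (a Chow--Rashevskii-type argument, along the lines of the passage from~\textnormal{(C$3''$)} to~\textnormal{(C$3'$)} recalled in Section~\ref{A:B}).

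With Lemma~\ref{lem:minor-tot-var} available for the law $\ell$, I would run the remainder of Section~\ref{sec:coupling} verbatim: the block-coupling construction of Subsection~\ref{S:3.1}, Proposition~\ref{prop:sc-to-coupling}, Proposition~\ref{prop:coincide} (using the Lyapunov structure from~\textnormal{(C1)} and the approximate controllability~\textnormal{(C2)} given by Proposition~\ref{prop:contr-semi-M}), and Proposition~\ref{prop:almost-main} all apply without change and yield $\IP_{(x,x')}\{\stopping>t\}\le C(1+\|x\|+\|x'\|)\Exp{-ct}$, hence the squeezing bound above, on $\rr^{J}\oplus\rr^I\oplus\rr^I$. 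Combined with Lemma~\ref{lem:existence}, this gives uniqueness of the stationary measure and the exponential mixing estimate~\eqref{0.7A}, as in the proof of the Main Theorem. Everything outside the modification of Lemma~\ref{lem:minor-tot-var} is bookkeeping; the real work is in that modification, since one must run the push-forward/regular-value argument with a measure supported on a proper subspace and extract the needed full-rank point from~\textnormal{(C$3''$)}.
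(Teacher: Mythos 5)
You correctly identify the real obstacle\,---\,the jump law $\ell$ of the superposed process $\sum_{j}\delta_j N_j$ is carried by the union of the coordinate axes of $\rr^{|J|}$ and therefore fails the density hypothesis of the Main Theorem\,---\,and your repair of Lemma~\ref{lem:minor-tot-var} by conditioning on the jump pattern $\mathbf j\in J^m$ and pushing forward a single absolutely continuous summand supported on $E_{\mathbf j}$ is a legitimate alternative to the paper's (sketched) strategy of clustering $|J|$ nearly simultaneous jumps, one per bath, whose independent sum has a positive continuous density on $\rr^{J}$. However, your claim that this defect ``enters the argument only through Lemma~\ref{lem:minor-tot-var}'' is not correct: it also breaks the proof of Lemma~\ref{lem:coupl-sync-near-hat}. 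There, the uniform lower bound on $\IP_x\{\|X_{\tau_M}-\hat x\|<r\}$ rests on $\inf_{x}\ell^M(\Xi_x)>0$, where $\Xi_x$ is a small $\delta$-neighbourhood in $(\rr^n)^M$ of $\iota_M(\zeta_x)$, whose entries $\int_{(j-1)/M}^{j/M}\zeta_x$ are generic (non-axis-aligned) vectors of $\rr^{|J|}$; for $|J|\geq 2$ and $\delta$ small these neighbourhoods miss the support of $\ell^M$, so $\ell^M(\Xi_x)=0$. That lemma must also be modified\,---\,e.g.\ by replacing each vector-valued jump of the step control by a rapid succession of $|J|$ axis-aligned jumps separated by short flow times, which is precisely the ``adapted by continuity'' argument of the paper's sketch\,---\,so ``run the remainder of Section~\ref{sec:coupling} verbatim'' does not stand as written.

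A second, softer gap sits in how you extract the full-rank point. The Chow--Rashevskii/Krener-type statement you invoke yields nonempty interior of the attainable set of the one-coordinate-at-a-time system, i.e.\ of the union over all $m$, all $\mathbf j$ \emph{and all flow times} $\bs$ of the images of $F_m(\hat x,\bs,{\cdot\,})|_{E_{\mathbf j}}$; it does not by itself produce one \emph{fixed} $(m,\mathbf j,\hat\bs)$ whose $\bxi$-slice has image with nonempty interior. A continuum of measure-zero images can sweep out an open set, and fixing $\bs$ at a joint regular point of the map in $(\bs,\bxi)$ may drop the rank, since the $\partial_{s_i}$-directions are pushforwards of $f$ and need not lie in the span of the $\partial_{\xi}$-directions. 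The cleanest bridge is again the continuity argument: Step~1 of Lemma~\ref{lem:minor-tot-var} (via (C3), which Proposition~\ref{prop:contr-semi-M} supplies) already gives a point where $\bxi\mapsto F_m(\hat x,\hat\bs,\bxi)$ with unconstrained jumps in $(\rr^{|J|})^m$ is a submersion; replacing each such jump by $|J|$ consecutive axis jumps with interlacing times tending to $0$ converges in $C^1$ on compacts, and being a submersion is a $C^1$-open condition, so the axis-constrained map inherits a full-rank point. With these two repairs your plan is sound and in fact fills in details the paper leaves implicit; the rest of your reduction (the superposition is a compound Poisson process with rate equal to the sum of the rates and mixture jump law of finite variance, and (C1)--(C3) come from Proposition~\ref{prop:contr-semi-M}) is correct.
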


\begin{proof}[Proof sketch]
	If the noise $\sum_{j \in J} \delta_j  N_j  $ were replaced by a single compound Poisson process whose jump distribution possesses a finite second moment and a positive continuous density with respect to the Lebesgue measure on~$\rr^J$, then our Main Theorem would apply.

	Although the probability that jumps in the different baths occur simultaneously is zero by independence, there is a positive probability that they occur arbitrarily close to simultaneity. Since an independent sum of a jump from each distribution gives a random variable with a finite variance and a positive continuous density with respect to the Lebesgue measure on~$\rr^J$, our control arguments can be adapted using additional continuity arguments.
\end{proof}

\appendix

\section{Exponential estimates on hitting times}
\label{app:hitting}

In this appendix, we present results on hitting times for  the  coupling $(z_k,z_k')$  constructed in Subsection~\ref{S:3.1}.   Loosely speaking, estimates on the hitting times of a small ball near~$\hat x$ are obtained by combining a lower bound on the hitting time of a (large) compact   around the origin and a lower bound on the probability of making a transition from the aforementioned compact to the small ball. We shall assume that Conditions~\textnormal{(C1)}--\textnormal{(C3)} are satisfied and fix the parameters $m, r,$ and $p$ as in Lemma~\ref{lem:minor-tot-var}.

We provide an estimate for the first simultaneous hitting time~$I$ of a ball of large radius $R$ around the origin. To do this, we use the preliminary estimates of Lemma~\ref{lem:Lyap} to exhibit the existence of a suitable Lyapunov structure and conclude with a standard argument.

\begin{lemma}\label{pre-lem:R4}
	The function~$V$ defined by~$V(y,y'):= 1+\|y\|^2+\|y'\|^2$ is a Lyapunov function in the sense that there exist positive constants~$R$ and~$C_*$ and a constant~$0 < a < 1$ such that
	\begin{align}
			\E_{(x,x')} V(z_{m},z_{m}')&\le a \,V(x,x')\,\quad\quad \quad    \text{for $\|x\|\vee\|x'\|\ge R$,}\label{E:31}\\
			\E_{(x,x')} V(z_{k},z_{k}')&\le C_* \, \quad\quad\quad \quad\quad \quad  \text{for $\|x\|\vee\|x'\|<R$, $k\ge0$}\label{E:32}.
	\end{align}
\end{lemma}

\begin{proof}
	By Lemma~\ref{lem:Lyap}, there is $\gamma \in (0,1)$ such~that
	\begin{align}
		\E_{(x,x')} (1 + \|z_{k}\|^2 + \|z'_{k}\|^2) &= 1 + \E_x \|X_{\tau_k}\|^2 + \E_{x'} \|X_{\tau_k}\|^2\nonumber \\
			&\leq 1 + \gamma^k (\|x\|^2 + \|x'\|^2) + 2C(1 + \Lambda)\label{E:33}
	\end{align}
	for all $k \in \nn$ and   $x, x' \in \rr^d$.  Taking  $k=m$, any $a\in (\gamma^m,1)$, and any $x,x'\in \rr^d$ such that
	\[
	\|x\|\vee \|x'\| \geq   (a -\gamma^m)^{-1/2} (1-a +  2C(1 + \Lambda))^{1/2}=:R,
	\]
	we get
	\begin{align*}
		\E_{(x,x')} \left(1 + \|z_m\|^2 + \|z'_m\|^2\right) &\leq a \left(1+\|x\|^2 + \|x'\|^2\right).
	\end{align*}
	Thus,~\eqref{E:31} holds.
 	In the case
	$
		\|x\|\vee \|x'\| \leq   R,
	$
	by~\eqref{E:33}, we have
	\begin{align*}
		\E_{(x,x')} (1 + \|z_{k}\|^2 + \|z'_{k}\|^2) &\leq 1 +   2R^2 + 2C(1 + \Lambda)=:C_*.
	\end{align*}
	This gives~\eqref{E:32} and completes the proof of the lemma.
\end{proof}

{It is well known that the Lyapunov structure of the previous lemma implies a bound on an exponential moment for the time needed to reach a large enough level set of the Lyapunov function~$V$. While arguments for this implication can be found in~\cite{MeTw}, we give a brief proof sketch and refer the reader to Proposition~3.1 in~\cite{Sh08} for a statement and complete proof which more precisely reflects our approach.}

\begin{corollary}\label{lem:R4}
		There exist positive constants $R$, $c_1$, and $C_1$ such that
		\[
			\E_{(x,x')}\Exp{c_1 \simcompact} \leq C_1(1 + \|x\|^2 + \|x'\|^2)
		\]
		for all $x,x' \in \rr^d$,
		where
		\[
			\simcompact :=\min \{ j \in \nn_m^0  : z_j, z'_j \in B(0,R) \}
		\]
\end{corollary}

{
\begin{proof}[Proof sketch]
	One can show using the Markov property and~\eqref{E:31} repeatedly that
	\[
		\ee_{(x,x')}[\one_{\{I > n m\}} V(z_{nm},z'_{nm})] \leq a^n V(x,x')
	\]
	and deduce using $V \geq 1$ that
	\begin{equation}
	\label{eq:3.7-in-Sh08}
		\pp_{(x,x')}[I > n m] \leq a^n V(x,x').
	\end{equation}
	By~\eqref{eq:3.7-in-Sh08} and the Borel--Cantelli lemma, $I$ is almost surely finite. Therefore, one can use
	\begin{align*}
		\ee_{(x,x')} \Exp{c_1 I} \leq 1 + \sum_{n=1}^\infty \ee_{(x,x')}[\one_{\{I = nm\}} \Exp{c_1 I}]
	\end{align*}
	and, for~$c_1$ small enough, the right-hand side can be bounded using~\eqref{eq:3.7-in-Sh08} in terms of $V(x,x')$ and a convergent geometric series.
\end{proof}
}

In what follows $R$, $c_1$ and~$C_1$ will be as in Corollary~\ref{lem:R4}. We continue with another estimate on an exponential moment.

\begin{lemma}\label{lem:coupl-exp-fast-in-compact}
	For any   $M \in \nn$, there is a   constant $C_2 > 0$ such that
	\begin{equation}\label{E:sig_i}
		\E_{(x,x')} \Exp{c_1 \simcompact_i} \leq C_2^i (1 + \|x\|^2 + \|x'\|^2)
	\end{equation}
	for all $x, x' \in \rr^d$ and   $i\in \nn$, where
	$\simcompact_0:=0$ and
	\[
		\simcompact_i:= \min\left\{ j \in \nn_m : j \geq \simcompact_{i-1} + M \text{ and } z_{j},z'_{j} \in B(0,R)\right\}.
	\]
\end{lemma}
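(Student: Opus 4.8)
The plan is to bootstrap Lemma~\ref{lem:R4} from a single excursion to an arbitrary number of excursions, using the strong Markov property at the stopping times $\simcompact_i$. The key observation is that, conditionally on $\mathcal{F}_{\simcompact_{i-1}}$, the time $\simcompact_i$ is, up to a deterministic shift of at most $M' := M + m$ blocks (accounting for the requirement $j \geq \simcompact_{i-1}+M$ and for rounding up to the next multiple of $m$), a copy of the first hitting time $\simcompact$ started from the pair $(z_{\simcompact_{i-1}+M'}, z'_{\simcompact_{i-1}+M'})$. Since all constants in the construction are uniform in the initial conditions, this reduces the claim to a recursive application of Lemma~\ref{lem:R4} together with the Lyapunov moment bound~\eqref{E:33}.

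Concretely, first I would fix $M' \in \nn_m$ with $M' \geq M$ (e.g.\ $M' = m\lceil M/m \rceil + m$) and write, using the strong Markov property and the tower rule at the stopping time $\simcompact_{i-1}$,
\begin{equation*}
	\E_{(x,x')} \Exp{c_1 \simcompact_i} = \E_{(x,x')}\left( \Exp{c_1 \simcompact_{i-1}} \, \E_{(z_{\simcompact_{i-1}}, z'_{\simcompact_{i-1}})}\Exp{c_1 \sigma'} \right),
\end{equation*}
where $\sigma'$ denotes the first block-time $j \in \nn_m$ with $j \geq M'$ at which both components lie in $B(0,R)$, started from the current pair. Since $z_{\simcompact_{i-1}}, z'_{\simcompact_{i-1}} \in B(0,R)$ by definition of $\simcompact_{i-1}$, the pair at time $\simcompact_{i-1}$ is a bounded initial condition. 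Next, I would bound $\sigma' \leq M' + \simcompact \circ \Theta_{M'}$, where $\simcompact \circ \Theta_{M'}$ is the first hitting time of $B(0,R)\times B(0,R)$ for the coupling shifted by $M'$ steps; applying the strong Markov property at time $M'$, Lemma~\ref{lem:R4}, and the moment bound~\eqref{E:33} (which controls $\E(1 + \|z_{M'}\|^2 + \|z'_{M'}\|^2)$ by a constant depending only on $R$ and $M'$ when started from $B(0,R)$), one gets
\begin{equation*}
	\sup_{y,y' \in B(0,R)} \E_{(y,y')}\Exp{c_1 \sigma'} \leq \Exp{c_1 M'}\, C_1\left(1 + \sup_{y,y'\in B(0,R)} \E_{(y,y')}(\|z_{M'}\|^2 + \|z'_{M'}\|^2)\right) =: C_2 < \infty.
\end{equation*}
Substituting this uniform bound back into the recursion yields $\E_{(x,x')}\Exp{c_1 \simcompact_i} \leq C_2 \, \E_{(x,x')}\Exp{c_1 \simcompact_{i-1}}$, and since $\simcompact_0 = 0$, iterating and invoking Lemma~\ref{lem:R4} for the base case $i=1$ gives $\E_{(x,x')}\Exp{c_1\simcompact_i} \leq C_2^{i-1} C_1(1 + \|x\|^2 + \|x'\|^2) \leq C_2^i(1 + \|x\|^2 + \|x'\|^2)$ after possibly enlarging $C_2$ to absorb $C_1$.

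The main obstacle, and the step requiring the most care, is making the strong Markov property rigorous for the block-structured coupling: one must check that $\simcompact_i$ is genuinely a stopping time for the filtration generated by the blocks $(\bs_j, \tilde\omega_j)$, that the post-$\simcompact_{i-1}$ increments $(\bs_j, \tilde\omega_j)_{j > \simcompact_{i-1}/m}$ are independent of $\mathcal{F}_{\simcompact_{i-1}}$ with the correct product law, and that the map $(y,y') \mapsto \E_{(y,y')}\Exp{c_1 \sigma'}$ is measurable so that the tower rule applies. The rounding to multiples of $m$ (so that $\simcompact_i \in \nn_m$) and the shift by $M$ must be handled by the deterministic inflation $M \rightsquigarrow M'$ described above, which only costs a harmless factor $\Exp{c_1 M'}$. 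Everything else is a direct combination of Lemma~\ref{lem:R4} and~\eqref{E:33} with uniform-in-initial-condition constants.
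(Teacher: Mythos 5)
Your argument is correct and follows essentially the same route as the paper: bound a single excursion uniformly over starting points in $B(0,R)\times B(0,R)$ by shifting deterministically past the forced waiting period, applying Lemma~\ref{lem:R4} at that shifted time together with the Lyapunov moment bound, and then iterate via the (strong) Markov property at the stopping times $\simcompact_{i-1}$, at which both components are guaranteed to lie in $B(0,R)$. Your explicit rounding $M\rightsquigarrow M'\in\nn_m$ is a point the paper glosses over, and is handled correctly (the inflated excursion time dominates the true one, so the recursion still closes as an inequality).
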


\begin{remark}
	The stopping time~$\simcompact_i$ depends on both~$M$ and~$R$. The value of~$R$ was already fixed in Corollary~\ref{lem:R4} and, in our application,~$M$ will be as in Lemma~\ref{lem:coupl-sync-near-hat}. It is important that the constant~$C_2$ does not depend on~$x$ and~$x'$.
\end{remark}

\begin{proof}
	  	By our last corollary, the Markov property, and~\eqref{eq:moment-emb} in Lemma~\ref{lem:Lyap}, we have
	\begin{align}
		\E_{(x,x')} \Exp{{c_1   \simcompact_1}}
			&= \Exp{c_1 M} \E_{(x,x')} \left( \E_{(z_M,z_M')}\Exp{c_1  \simcompact}\right) \nonumber\\
			&\leq C_1 \Exp{c_1 M} \E_{(x,x')} (1 + \|z_M\|^2 + \|z'_M\|^2) \nonumber\\
			&\leq C_1 \Exp{c_1 M}   (1 + \gamma^M \|x\|^2 + \gamma^M\|x'\|^2 + 2 C(1+ \Lambda))\nonumber \\
			&\leq \tilde C_1   (1 +\|x\|^2 +  \|x'\|^2)\label{E:40}
	\end{align}
	for $\tilde C_1$ a combination of $C$, $C_1$ and~$\Lambda$.
	In particular, for any   $x, x' \in B(0,R)$,
	\begin{align*}
		\E_{(x,x')} \Exp{{c_1    \simcompact_1}}
			 \leq \tilde C_1   (1 +R^2 +  R^2)=:C_2.
			 \end{align*}
			 Then
	 $z_{\simcompact_{i-1}},z'_{\simcompact_{i-1}}  \in B(0,R)$ for any $i > 1$,  and therefore
	 \[
			\E_{(x,x')} \Exp{c_1 \simcompact_i}  = \E_{(x,x')} \left(\Exp{c_1 \simcompact_{i-1}}  \E_{(z_{\simcompact_{i-1}},z'_{\simcompact_{i-1}})}\Exp{c_1  \simcompact_1 }\right)  			 \leq C_2 \E_{(x,x')}  \Exp{c_1 \simcompact_{i-1}}
			 \leq C_2^{i-1} \E_{(x,x')}  \Exp{c_1 \simcompact_{1}} .
		\]
	Finally, using~\eqref{E:40}, we obtain~\eqref{E:sig_i}.
\end{proof}

\begin{lemma}\label{lem:coupl-sync-near-hat}
	Consider the random variable
	\[
		\simball:= \min\left\{j \in \nn_m^0 : z_{j},z'_{j} \in B(\hat x, r)\right\},
	\]
	where $\hat x$ is as in Condition~\textnormal{(C2)}.
	There exists $M  \in \nn_m$ such that
		\begin{equation}\label{E:timectrl}
			0< q  := \inf_{x,x' \in B(0,R)} \IP_{(x,x')} \left\{\simball  \le  M\right\}.
		\end{equation}
\end{lemma}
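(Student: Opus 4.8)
\emph{Plan.} The idea is to realize the approximate controllability~\textnormal{(C2)} through the jumps of the coupled process and then to quantify the probability of doing so by exploiting the positivity of the densities of the waiting times and of the jumps. I would first fix $R$ as in Lemma~\ref{lem:R4} and a small number $\epsilon\in(0,r)$, to be pinned down at the end. By~\textnormal{(C2)}, there is a \emph{single} time $T>0$ such that for every $y\in B(0,R)$ one can pick a control $\zeta_y\in C([0,T];\rr^n)$ with $\|S_T(y,\zeta_y)-\hat x\|<\epsilon$. The family $\{\zeta_y\}$ is not a priori bounded or equicontinuous, so I would use the continuity of $(y,\zeta)\mapsto S_T(y,\zeta)$ together with the compactness of $B(0,R)$ to replace it by a \emph{finite} family: there are points $y_1,\dots,y_L\in B(0,R)$, radii $\rho_1,\dots,\rho_L>0$ with $B(0,R)\subset\bigcup_i B(y_i,\rho_i)$, and fixed controls $\zeta^{(1)}:=\zeta_{y_1},\dots,\zeta^{(L)}:=\zeta_{y_L}$ (each bounded and uniformly continuous on $[0,T]$) such that $\|S_T(y,\zeta^{(i)})-\hat x\|<2\epsilon$ whenever $y\in B(y_i,\rho_i)$.

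Next I would discretize, as in Step~1 of the proof of Lemma~\ref{lem:minor-tot-var}. For $M\in\nn_m$ set $\bs^\ast:=(\tfrac TM,\dots,\tfrac TM)\in(\rr_+)^M$ and, for $1\le i\le L$, let $\bxi^{(i)}\in(\rr^n)^M$ be the vector whose $k$-th entry is $\int_{(k-1)T/M}^{kT/M}\zeta^{(i)}(s)\,\d s$. The convergence of the corresponding piecewise-flow scheme, uniform over the finite family, gives $\|F_M(y_i,\bs^\ast,\bxi^{(i)})-S_T(y_i,\zeta^{(i)})\|\to 0$ as $M\to\infty$; combining this with the continuity of $F_M$ in its first argument and shrinking the $\rho_i$, I fix $M\in\nn_m$ large enough that $\|F_M(y,\bs^\ast,\bxi^{(i)})-\hat x\|<3\epsilon$ for all $i$ and all $y\in B(y_i,\rho_i)$. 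Using the continuity of $F_M$ once more, I then pick $\delta>0$ so small that $\|F_M(y,\bs,\bxi)-\hat x\|<4\epsilon$ whenever $y\in B(y_i,\rho_i)$, every coordinate of $\bs$ is within $\delta$ of $\tfrac TM$, and every coordinate of $\bxi$ is within $\delta$ of the corresponding coordinate of $\bxi^{(i)}$; taking $\epsilon<r/4$ makes this endpoint land in $B(\hat x,r)$.

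With $M$, $\bs^\ast$, $\bxi^{(1)},\dots,\bxi^{(L)}$ and $\delta$ fixed, I would lower bound the coupling probability block by block. Given $x,x'\in B(0,R)$, pick $i,i'$ with $x\in B(y_i,\rho_i)$ and $x'\in B(y_{i'},\rho_{i'})$, and consider, for $j=0,\dots,\tfrac Mm-1$, the event $G_j$ that in the $j$-th block all $m$ waiting times are within $\delta$ of $\tfrac TM$, the jump vector used by the first component is within $\delta$ coordinatewise of the $j$-th block of $\bxi^{(i)}$, and, \emph{if} that block is run in the independent-directions mode, the jump vector used by the second component is within $\delta$ of the $j$-th block of $\bxi^{(i')}$ (with $G_j$ declared to hold automatically once both components have already met $B(\hat x,r)$). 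Conditioning on the past, the waiting times of block $j$ have a density bounded below near $\bs^\ast$, and the conditional law of the jump vector used by each component is $\ell^m$, with the two being conditionally independent in the independent-directions mode; hence $\IP(G_j\mid\text{past})$ is bounded below by a constant $c>0$ \emph{independent of $x,x'$}, because the relevant $\ell^m$-masses of $\delta$-balls range over the finite collection indexed by $1,\dots,L$ and the exponential density does not depend on the initial data at all. Thus $\IP_{(x,x')}\big(\bigcap_j G_j\big)\ge c^{M/m}$. It remains to observe that on $\bigcap_j G_j$ one has $\simball\le M$: at every block before the two components have both entered $B(\hat x,r)$ the block runs either in the independent-directions mode or (once they coincide) in the coupled mode, so the first component always follows the plan $\bxi^{(i)}$ and lies in $B(\hat x,r)$ at step $M$, while the second follows either $\bxi^{(i')}$ or, after coinciding, the same plan $\bxi^{(i)}$, and in both cases also lies in $B(\hat x,r)$ at step $M$; hence the first simultaneous visit to $B(\hat x,r)$ occurs no later than step $M$. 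Taking the infimum over $x,x'\in B(0,R)$ yields~\eqref{E:timectrl} with this $M$ and $q:=c^{M/m}$.

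The main obstacle is organizational rather than deep: the controls supplied by~\textnormal{(C2)} come only pointwise in the initial condition, so one must first pass to a finite subfamily by compactness before discretizing, and one must keep track of the coupling's mode switches. The latter turn out to be benign, since entering the maximal-coupling mode already means $\simball$ has been achieved, and entering the coupled mode only forces the second component to follow a plan that still ends in $B(\hat x,r)$. The discretization estimate itself is of the same nature as the one already carried out in Step~1 of Lemma~\ref{lem:minor-tot-var}.
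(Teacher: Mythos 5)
Your proposal is correct and follows essentially the same route as the paper's proof: reduce (C2) to a finite family of controls by continuity and compactness, discretize them into planned jump/waiting-time vectors as in Step~1 of Lemma~\ref{lem:minor-tot-var}, lower-bound the probability of the corresponding tube events using the positive densities of $\ell$ and of the exponential waiting times, and observe that the coupling's mode switches are harmless since the maximal-coupling mode is only entered after $\simball$ has already been achieved. Your block-by-block conditioning is a slightly more explicit version of the independence argument the paper invokes, but it is the same idea.
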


\begin{proof}
	Let~$T$ be the time  in Condition~\textnormal{(C2)} for~$\epsilon = \tfrac r2 $ and radius $R$.  To simplify the presentation, we assume that   $T=1$.

 	\smallskip
 	\noindent \textit{Step 1: controlling a single trajectory of the \textsc{sde}~\eqref{0.1}.} First, let us show an inequality like~\eqref{E:timectrl} for a single trajectory of the \textsc{sde}~\eqref{0.1}.
 	Take an initial condition $x \in B(0,R)$. By Condition~\textnormal{(C2)}, there exists a control~$\zeta_x \in C([0,1];\rr^n)$ such that
	\begin{equation}\label{E:ctrl}
			\|S(x,\zeta_x) - \hat x\| < \frac r2.
	\end{equation}
	By a standard continuity and compactness argument, we can find a   finite set
	\(
		Z 
		\subset C([0,1];\rr^n)
	\)
	such that the control $\zeta_x$ in~\eqref{E:ctrl} can be chosen from  $Z$ for any $x\in B(0,R)$. For any integer~$M\ge1$, let the mapping $F_M:\rr^d\times (\rr_+)^M\times (\rr^n)^M \to \rr^d$ be defined by~\eqref{E:fk}, let $\iota_M$ be as in Lemma~\ref{lem:minor-tot-var}, and consider the sets
 	\begin{align*}
		\Delta &:= \left\{\bs=(s_j)_{j=1}^M\in (\rr_+)^M : s_j \in \left(\frac{1-\delta}{M},\frac{1}{M}\right),\quad  j = 1, \dotsc, M\right\},\\
		\Xi_x&:=\left\{\bxi=(\xi_j)_{j=1}^M\in(\rr^n)^M : \left\| \iota_M(\zeta_{x})  - \bxi\right\|_{(\rr^n)^M} <  \delta    ,\quad  j = 1, \dotsc, M\right\}
	\end{align*}for any $\delta>0$. Again by a   continuity and compactness argument, it is not hard to see that
	\begin{align*}
			\Delta \times  \Xi_x \subset  \left\{\bs\in (\rr_+)^M,\, \bxi\in (\rr^n)^M  : \|F_M(x,\bs,\bxi) - \hat x\| < r \right\}
	\end{align*}for sufficiently large $M\in \nn_m$, small $\delta>0$, and any $x\in B(0,R)$.
	Note that $F_M(x,\bs, \bxi)=X_{\tau_M}$ when $\bs=(t_j)_{j=1}^M$ and $\bxi=(\eta_j)_{j=1}^M$.
	By our assumptions on the laws of $t_j$ and $\eta_j$, it is clear that\footnote{Recall that $\mathcal{E}_\la^M$ and $\ell^M$ stand for the $M$-fold products of the exponential distribution and $\ell$, respectively.}
	\begin{gather*}
		   \mathcal{E}_\la^M (\Delta)=  \prod_{j = 1}^M  \left(\Exp{-\lambda  \frac{1-\delta}{M}}- \Exp{-\lambda \frac{1}{M}}\right)>0,\\
   \inf_{x\in B(0,R)}\ell^M(\Xi_x) > 0,
	\end{gather*}
	since there is only a finite number of sets $\Xi_x$ for $x$ in $B(0,R)$.
	We conclude that
	\begin{equation}\label{E:36}
		0<  \inf_{x\in B (0,R)} \IP_{x} \left\{ \|X_{\tau_M}-\hat x\|<r \right\}.
	\end{equation}

	\smallskip
	\noindent \textit{Step 2: case of coupling   trajectories.} We consider three cases.

	$\bullet$
		If $x = x'$, then the trajectories $z_{j}$ and $z'_{j}$ coincide for all $j$ and the result follows immediately from~\eqref{E:36}.

	$\bullet$   If $x \neq x'$ with $x, x' \in B(\hat x, r)$, then
			\[\IP_{(x,x')} \left\{\simball =0 \right\} = 1.\]

	$\bullet$
		If $x \neq x'$ not both in $B(\hat x, r)$, consider $\bs \in \Delta$, $\bxi \in \Xi_x$, and  $\bxi' \in \Xi_{x'}$. By construction, both~$F_M(x,\bs,\bxi)$ and $F_M(x',\bs,\bxi')$ lie in~$B(\hat x, r)$.
		Then, there exists a minimal $k \in \nn_m$ such that both $F_k(x,\bs,\bxi)$ and $F_k(x',\bs,\bxi')$ lie in~$B(\hat x, r)$. Necessarily,~$k$ satisfies $k \leq M$.
		Therefore,~the~construction of the coupling\,\footnote{When the coupling starts with $x \neq x'$ not both in $B(\hat x, r)$, the   first $m$ jumps are independent. The probability of $z_m = z'_m$ is zero by our assumptions on~$\ell$. Thus going by blocks of~$m$ steps, we see that the jumps are independent until both trajectories simultaneously hit $B(\hat x, r)$ at a time which is a multiple of~$m$.}
		implies that $z_k, z'_k$ are guaranteed to be in~$B(\hat x, r)$ for some $ k \leq M$ {for all~$\omega = (x,x',(\bs_j,\tilde\omega_j)_{j \in \nn})$ such that $(\bs_j)_{j=1}^{M/m}$} lies in~$\Delta$
		and such that $(\xi(x,x',\bs_j,\tilde\omega_j))_{j=1}^{M/m}$ and~$(\xi''(x,x',\bs_j,\tilde\omega_j))_{j=1}^{M/m}$ lie respectively in~$\Xi_x$ and~$\Xi_{x'}$. By construction,
		{
		\begin{align*}
			\tilde\IP\left\{\tilde\omega_j : \xi(x,x',\bs_j,\tilde\omega_j) \in
			 \Xi_x\right\}  &= \ell^M(\Xi_x), \\
			\tilde\IP\left\{\tilde\omega_j : \xi''(x,x',\bs_j,\tilde\omega_j) \in
			 \Xi_{x'}\right\} & = \ell^M(\Xi_{x'}),
		\end{align*}
		and
		\begin{equation*}
			\mathcal{E}_\la^M
			(\Delta)
			=  \prod_{j = 1}^M  \left(\Exp{-\lambda  \frac{1-\delta}{M}}- \Exp{-\lambda \frac{1}{M}}\right) .
		\end{equation*}}
		Then, independence gives
		\[
			  \IP_{(x,x')}\left\{\simball \leq M \right\} \\
			  \geq \ell^M(\Xi_x)\, \ell^M(\Xi_{x'})  \prod_{j = 1}^M  \left(\Exp{-\lambda  \frac{1-\delta}{M}}- \Exp{-\lambda \frac{1}{M}}\right) >0 .
		\]
  	The uniformity in~$x$ and~$x'$  follows from the fact that there  is  only   a finite number of sets~$\Xi_x$ and $\Xi_{x'}$ to consider as~$x$ and~$x'$ range over the set~$B(0,R)$.
\end{proof}

The main result of this appendix is the following exponential-moment bound on the random variable~$\simball$. {The argument used to deduce the proposition from the previous lemmas is well known and is for example discussed in depth in Section~3.3.2 in~\cite{KuSh}.} 
\begin{proposition}\label{lem:hit-near-hat-couple}
	There are constants~$\theta_2>0$ and $A_2>0$ such that
	\begin{equation}\label{1.4}
		\E_{(x,x')} \Exp{\theta_2  \simball} \le A_2\left(1+\|x\|^2 + \|x'\|^2\right)
 	\quad
 	\end{equation}
	for all $x, x' \in \R^d$.
\end{proposition}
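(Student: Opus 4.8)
The plan is to mimic the proof of Proposition~\ref{prop:coincide}: I would combine the recurrence to the large ball~$B(0,R)$ afforded by Lemma~\ref{lem:coupl-exp-fast-in-compact} with the uniform lower bound of Lemma~\ref{lem:coupl-sync-near-hat} on the probability of driving both components into~$B(\hat x,r)$ within~$M$ steps, and then sum a geometric series by means of a H\"older inequality. To set things up, fix~$M\in\nn_m$ and the corresponding~$q\in(0,1)$ as in Lemma~\ref{lem:coupl-sync-near-hat}, and let~$(\simcompact_i)_{i\ge0}$ be the nested stopping times of Lemma~\ref{lem:coupl-exp-fast-in-compact} associated with this value of~$M$ (and with~$R$,~$c_1$ as in Lemma~\ref{lem:R4}). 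Each window~$(\simcompact_i,\simcompact_i+M]$ with~$i\ge1$ will serve as an ``attempt'' at achieving~$z_j,z'_j\in B(\hat x,r)$ simultaneously, and by construction these attempts succeed with probability at least~$q$, conditionally on the past.

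The first step is to prove the geometric estimate~$\IP_{(x,x')}\{\simball>\simcompact_i\}\le(1-q)^{i-1}$ for every~$i\ge1$. Since~$\simcompact_{i+1}\ge\simcompact_i+M$ by construction, one has~$\{\simball>\simcompact_{i+1}\}\subset\{\simball>\simcompact_i+M\}$, so it suffices to bound the conditional probability of the latter given~$\mathcal{F}_{\simcompact_i}$. On~$\{\simball>\simcompact_i\}$ we have~$z_{\simcompact_i},z'_{\simcompact_i}\in B(0,R)$ by the very definition of~$\simcompact_i$ (this is where~$i\ge1$ is used), and the strong Markov property at the stopping time~$\simcompact_i\in\nn_m$ —legitimate because the coupling of Subsection~\ref{S:3.1} is built in blocks of~$m$ steps— identifies~$\IP_{(x,x')}(\simball>\simcompact_i+M\mid\mathcal{F}_{\simcompact_i})$ on that event with~$\IP_{(z_{\simcompact_i},z'_{\simcompact_i})}\{\simball>M\}\le1-q$ by Lemma~\ref{lem:coupl-sync-near-hat}; on the complementary event~$\{\simball\le\simcompact_i\}$ the set~$\{\simball>\simcompact_i+M\}$ is empty. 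Taking expectations yields~$\IP_{(x,x')}\{\simball>\simcompact_{i+1}\}\le(1-q)\,\IP_{(x,x')}\{\simball>\simcompact_i\}$, and iterating (using~$\IP\{\simball>\simcompact_1\}\le1$) gives the claim; in particular~$\simball<\infty$ almost surely.

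The second step is the H\"older summation. Introducing an exponent~$\varrho\ge1$ to be fixed later and decomposing according to which window contains~$\simball$, one gets
\begin{align*}
	\E_{(x,x')}\e^{\theta_2\simball}
		&\le 1+\sum_{i=0}^\infty\E_{(x,x')}\!\left(\one_{\{\simcompact_i<\simball\le\simcompact_{i+1}\}}\e^{\theta_2\simcompact_{i+1}}\right)\\
		&\le 1+\sum_{i=0}^\infty\bigl(\IP_{(x,x')}\{\simball>\simcompact_i\}\bigr)^{1-\frac1\varrho}\bigl(\E_{(x,x')}\e^{\varrho\theta_2\simcompact_{i+1}}\bigr)^{\frac1\varrho}.
\end{align*}
Here I would bound the probability factor by Step~1 (and trivially by~$1$ for~$i=0$), and the exponential-moment factor by Lemma~\ref{lem:coupl-exp-fast-in-compact}, namely~$\E_{(x,x')}\e^{\varrho\theta_2\simcompact_{i+1}}\le C_2^{\,i+1}(1+\|x\|^2+\|x'\|^2)$, which is valid as soon as~$\varrho\theta_2\le c_1$. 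The resulting series has geometric ratio~$(1-q)^{1-1/\varrho}C_2^{1/\varrho}$, which tends to~$1-q<1$ as~$\varrho\to\infty$; hence one first fixes~$\varrho$ large enough for the series to converge and then sets~$\theta_2:=c_1/\varrho$, producing~\eqref{1.4} with a constant~$A_2$ depending only on~$\varrho$,~$q$,~$c_1$, and~$C_2$ (using~$(1+t)^{1/\varrho}\le1+t$ for~$t\ge0$ to absorb the power).

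I expect the only genuinely delicate point to be the strong Markov bookkeeping of Step~1: one must check that, conditionally on~$\mathcal{F}_{\simcompact_i}$ and on~$\{\simball>\simcompact_i\}$, the restriction of~$(z_k,z'_k)$ to the window~$(\simcompact_i,\simcompact_i+M]$ has the law of a fresh coupling issued from~$(z_{\simcompact_i},z'_{\simcompact_i})\in B(0,R)\times B(0,R)$, so that Lemma~\ref{lem:coupl-sync-near-hat} applies verbatim over its~$M$-step horizon. This rests precisely on the block-of-$m$ structure of the construction together with the fact that~$M$ and all the~$\simcompact_i$ are multiples of~$m$; everything else is routine bookkeeping entirely parallel to the proof of Proposition~\ref{prop:coincide}.
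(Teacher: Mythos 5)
Your proposal is correct and follows essentially the same route as the paper: the same nested stopping times $\simcompact_i$ (with $M$ from Lemma~\ref{lem:coupl-sync-near-hat} and $R$, $c_1$ from Lemma~\ref{lem:R4}), the same geometric estimate $\IP_{(x,x')}\{\simball>\simcompact_i\}\le(1-q)^{i-1}$ obtained from the strong Markov property at the block times, and the same exponential-moment input from Lemma~\ref{lem:coupl-exp-fast-in-compact}. The only difference is in the final bookkeeping: the paper first derives the exponential tail bound $\IP_{(x,x')}\{\simball>k\}\le a^k(1+\|x\|^2+\|x'\|^2)$ by splitting $\{\simball>k\}\subset\{\simcompact_i<\simball\}\cup\{\simcompact_i\ge k\}$ with $i\sim\epsilon k$ and then passes to the moment, whereas you sum the exponential moment directly via H\"older's inequality as in the proof of Proposition~\ref{prop:coincide}; both finishes are standard and valid.
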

\begin{proof}
	Let $\simcompact_i $ be defined as in Lemma~\ref{lem:coupl-exp-fast-in-compact} with constant  $M\in\nn_m$ as in  Lemma~\ref{lem:coupl-sync-near-hat}.
	Then
	\[
		 \IP_{(x,x')} \left\{\simball > k	\right\} \\
			  \leq \IP_{(x,x')} \{ \simcompact_i < \simball\} + \IP_{(x,x')} \{ \simcompact_i \geq k \}
	\]
	for any choice of integers $i,k\ge1$.
	To control the first term, note that the Markov property and Lemma~\ref{lem:coupl-sync-near-hat} imply
	\[
		 \IP_{(x,x')} \left\{ \simcompact_i <  \simball \right\}    \leq (1- q )\,\IP_{(x,x')} \left\{ \simcompact_{i-1} < \simball \right\}   \leq (1- q )^{i-1}.
	\]
	For the second term, we have the bound
	\begin{align*}
		\IP_{(x,x')} \{ \simcompact_i  \geq k \} \leq  C_2^{i} \Exp{-c_1 k}(1 + \|x\|^2 + \|x'\|^2)
	\end{align*}
	by Chebyshev's inequality and Lemma~\ref{lem:coupl-exp-fast-in-compact}.
	 In particular, taking $i$ scaling like $\epsilon k$ for $\epsilon$ small enough, we find
	\begin{align*}
		\IP_{(x,x')} \left\{ \simball > k
		\right\} & \leq (1- q )^{\epsilon k - 1} + C_2^{\epsilon k} \Exp{-c_1 k} (1 + \|x\|^2 + \|x'\|^2) \\
			&\leq C_3 a^k (1 + \|x\|^2 + \|x'\|^2)
	\end{align*}
	for some $a \in (0,1)$ and $C_3 > 0$. This exponential decay of the probability yields the proposition for~$\theta_2$ small enough and $A_2$ large enough.
\end{proof}

\section{Controllability of ODEs with polynomially growing nonlinearities}\label{A:D}

When the perturbation term~$g$ in~\eqref{E:non} is a polynomial, Proposition~\ref{P:4.2} follows  from~\cite[Thm.~3]{JK-1985} or~\cite[Thm.~11 in Ch.~5]{J-1997} and the system is even exactly controllable. In the general case, when~$g$ is an arbitrary smooth function satisfying~(i) and~(ii),  these results  cannot be  applied since the
H\"ormander con\-dition is not necessarily satisfied at all the points. We adapt an argument used in~\cite[Thm.~2.5]{N-2019} which is particularly simple in the case of ordinary differential equations. Let us consider the equation
\begin{equation} \label{E:GPDE2}
	\dot u(t)-\nu\Delta (u(t)+\AScaux(t))+{\mathsf P}_N F(u(t)+\AScaux(t))=h+\ASc(t),
\end{equation}
with two controls~$\AScaux$ and~$\ASc$ in~$C([0,T]; \HH_N)$.\footnote{The idea of introducing the second control $\AScaux$ comes from~\cite{AS05} and is nowadays extensively used in the control theory of PDEs with finite-dimensional controls (see the surveys~\cite{AS-2008, MR3777005}).} We denote by $S_t(u_0,\AScaux, \ASc)$ the solution of~\eqref{E:GPDE2} satisfying  the initial condition~$u(0)=u_0$. To simplify the presentation, we shall assume that~$a=1$ in~\eqref{E:non}.
Let us define a sequence $\{\HHH_i\}_{i \geq 1}$ of   subspaces of $H_N$ as follows: $\HHH_1=H_1$   and
\[
	\HHH_i=\text{span}\left\{{\mathsf P}_N(\ASconstcaux_1\cdot\ldots\cdot\ASconstcaux_p):\,\,\ASconstcaux_j\in\HHH_{i-1},\,\, j=0,\ldots,p  \right\}
\]
for $i\ge2$.
The trigonometric identities~\eqref{eq:proj-trigo-1} and~\eqref{eq:proj-trigo-2} give that
$s_{l\pm m}, c_{l\pm m} \in \HHH_i$, provided that $s_{l}, s_{ m}, c_{l}, c_{ m}\in \HHH_{i-1}$. Recalling the definition of $H_1$, it is easy to infer that
\begin{equation}\label{E:47}
\HHH_i=\HH_N \quad \text{ for sufficiently large $i\ge1$.}
\end{equation}
We will also use another form of these subspaces:
\begin{equation}
	\HHH_i=\text{span}\left\{\ASconstcaux_0,  \,\,{\mathsf P}_N \ASconstcaux^p:\,\,\ASconstcaux_0,\ASconstcaux\in\HHH_{i-1}   \right\} \label{E:48}
\end{equation}
for~$i\ge2$, which can be verified as in Lemma~4.2 in~\cite{N-2019}.

The following lemma will play an important role in the proof of Proposition~\ref{P:4.2}. It is established at the end of this subsection.
\begin{lemma}\label{B1}
	Under the conditions of Theorem~\ref{T:4.1},
	for any vectors $u_0,\ASconstcaux, \ASconstc\in \HH_N$, we have
	\begin{equation}\label{E:49}
 	 	S_{\de}(u_0,\de^{-1/p}\ASconstcaux,\de^{-1}\ASconstc)\to  u_0+\ASconstc-  {\mathsf P}_N\ASconstcaux^p \quad\text{in $\HH_N$ as $\de\to 0$}.
	\end{equation}
\end{lemma}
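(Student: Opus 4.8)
The plan is to analyze the solution of the scaled controlled equation~\eqref{E:GPDE2} over the short time interval~$[0,\delta]$ by writing out the integral form and tracking the contributions of each term as~$\delta \to 0$. Write $u(t) = S_t(u_0, \delta^{-1/p}\varphi, \delta^{-1}\psi)$. On $[0,\delta]$ the control $\xi$ is the constant vector $\delta^{-1/p}\varphi$, the control $\zeta$ is the constant vector $\delta^{-1}\psi$, and so the equation reads
\[
	\dot u(t) = \nu\Delta(u(t) + \delta^{-1/p}\varphi) - {\mathsf P}_N F(u(t) + \delta^{-1/p}\varphi) + h + \delta^{-1}\psi .
\]
First I would integrate from~$0$ to~$\delta$. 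The term $\int_0^\delta \delta^{-1}\psi\,\d t = \psi$ survives in the limit and produces the additive~$\psi$ on the right-hand side of~\eqref{E:49}. The linear terms $\nu\Delta u(t)$, the forcing $h$, and $\nu\Delta(\delta^{-1/p}\varphi)$ contribute $\int_0^\delta(\dotsb)\,\d t$; since $\delta^{-1/p}\cdot\delta = \delta^{1-1/p} \to 0$ for $p \geq 3$, and $u$ stays bounded on $[0,\delta]$ (to be justified), all of these vanish. The crux is the nonlinear term. Using $F(v) = v^p + g(v)$ with $a=1$, the dominant piece is
\[
	-\int_0^\delta {\mathsf P}_N\big( (u(t) + \delta^{-1/p}\varphi)^p \big)\,\d t .
\]
Expanding the $p$-th power by the binomial theorem, the leading term is $-\int_0^\delta {\mathsf P}_N\big( \delta^{-1}\varphi^p \big)\,\d t = -{\mathsf P}_N \varphi^p$, which is exactly the remaining term on the right of~\eqref{E:49}. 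Every other term in the expansion carries a factor $\delta^{-k/p}$ with $0 \le k \le p-1$, multiplied by powers of the bounded quantity $u(t)$, and integrated over an interval of length $\delta$; since $\delta^{1 - k/p} \to 0$ for $k \le p-1$, these all vanish. The contribution of $g$: by assumption~(i), $|g(v)| \le C(1+|v|)^{p-1}$, so ${\mathsf P}_N g(u(t) + \delta^{-1/p}\varphi)$ is $O(\delta^{-(p-1)/p})$ pointwise, and integrating over $[0,\delta]$ gives $O(\delta^{1/p}) \to 0$.

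The main obstacle, and the step requiring the most care, is establishing the \emph{a priori} bound on $\sup_{t \in [0,\delta]}\|u(t)\|_{L^2}$ that makes all the above estimates legitimate — since the argument is somewhat circular (the estimates on the nonlinear term need $u$ bounded, but $u$ is a solution of an equation with large coefficients). The way around this is a bootstrap: introduce the shifted variable $w(t) := u(t) - \delta^{-1} t\,\psi$ (so that $w$ absorbs the large constant forcing and $w(0) = u_0$), or alternatively $w(t) := u(t) + \delta^{-1/p}\varphi$, and write a Grönwall-type inequality for $\|w(t)\|_{L^2}$ on $[0,\delta]$. One uses the dissipativity already extracted in the proof of Theorem~\ref{T:4.1} — namely $\langle \nu\Delta v - {\mathsf P}_N F(v), v\rangle_{L^2} \le -\nu\|v\|_{L^2}^2 + C$ together with the sign of the top-order term $-\langle {\mathsf P}_N(v^p), v\rangle_{L^2} = -\int |v|^{p+1} \le 0$ — to control the genuinely nonlinear contributions, while the remaining terms (which involve the large factors $\delta^{-1/p}$ and $\delta^{-1}$) are linear or constant in $w$ and so produce, after integrating the differential inequality over a time interval of length $\delta$, contributions of size $O(\delta^{1 - 1/p})$ or $O(\delta^{\text{(something positive)}})$ plus a term proportional to $\|\psi\|$ and $\|\varphi\|$. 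This yields $\sup_{[0,\delta]}\|w(t)\|_{L^2} \le \|u_0\|_{L^2} + \|\psi\| + o(1)$, hence the needed uniform bound on $u$ on $[0,\delta]$ for $\delta$ small.

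Once the uniform bound is in hand, I would revisit the integral identity, substitute the bound into each of the error terms identified above, and collect: the limit of $u(\delta) = u_0 + \int_0^\delta \dot u(t)\,\d t$ is $u_0 + \psi - {\mathsf P}_N\varphi^p$, with all other terms shown to be $o(1)$ as $\delta \to 0$. A clean way to organize this is to split $\dot u = \delta^{-1}\psi + \mathcal{R}(t)$ where $\mathcal{R}(t)$ gathers every other term, establish $\|\mathcal R(t)\|_{L^2} \le \delta^{-1}\|{\mathsf P}_N\varphi^p\|_{L^2} + C(\delta^{-(p-1)/p} + \dotsb)$ with the leading $\delta^{-1}$ piece being exactly $-\delta^{-1}{\mathsf P}_N\varphi^p$ up to an integrable-in-the-limit remainder, and conclude $\int_0^\delta \mathcal R(t)\,\d t \to -{\mathsf P}_N\varphi^p$. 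This gives~\eqref{E:49}. The convergence is in $\HH_N$, which is finite-dimensional, so all norms are equivalent and there is no subtlety about the topology.
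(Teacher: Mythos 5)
Your proposal is correct and follows essentially the same route as the paper: a change of variables that absorbs the singular drift, a Gr\"onwall-type energy estimate supplying the a priori bound (with the top-order term $-\int v^{p+1}\le 0$ or Young's inequality controlling the genuine nonlinearity), and a binomial expansion of $(u+\delta^{-1/p}\varphi)^p$ identifying $-{\mathsf P}_N\varphi^p$ as the only surviving nonlinear contribution. The paper merely packages this more compactly by rescaling time to $[0,1]$ and subtracting the full affine limit trajectory $w(t)=u_0+t(\psi-{\mathsf P}_N\varphi^p)$, so that a single differential inequality $\tfrac{\d}{\d t}\|v\|_{L^2}^2\le C\delta^{1/p}(\|v\|_{L^2}^{p+1}+1)$ delivers the uniform bound and the convergence in one stroke, rather than in your two-step (bound first, then integral identity) form.
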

\begin{proof}[Proof of Proposition~\ref{P:4.2}]
	By a general argument (see for example Step~4 in the proof of Theorem~2.3 in~\cite{N-2019}) approximate controllability in any fixed time~$T>0$ can be obtained from controllability in arbitrarily small time.

	Lemma~\ref{B1} gives that for all~$u_0 \in \HH_N$, $\ASconstc \in \HH_1 = \HHH_1$, $\epsilon > 0$, and~$T>0$, there exists~$ \ASc \in C([0,\delta];\HH_1)$ with $0 < \delta < T$ such that
	\begin{equation}\label{eq:appr-goal}
		\|S_{\delta}(u_0, \ASc) - (u_0 + \ASconstc)\|_{L^2} < \epsilon.
	\end{equation}

	Because $\HH_N = \HHH_i$ for some $i$, we may proceed by induction on~$i$:
	let us suppose that for all~$u_0 \in \HH_N$, $\ASconstc \in \HHH_{i-1}$, $\epsilon > 0$, and~$T>0$, there exists $ \ASc \in C([0,\delta];\HH_1)$ with $0 < \delta < T$ such that~\eqref{eq:appr-goal} holds;
	we will show that this property then also holds for~$i$,
	and the proof of the proposition will be complete.

	Fix $u_0 \in \HH_N$. By~\eqref{E:48}, any~$\ASconstc \in \HHH_i$ can be written as a linear combination of elements of the form $\mathsf{P}_N \ASconstcaux^p$ with~$\ASconstcaux \in \HHH_{i-1}$, plus a vector in~$\HHH_{i-1}$.
	Hence, by an iteration argument, it suffices to consider vectors~$\ASconstc$ of the form~$-\mathsf{P}_N \ASconstcaux^p$ for some $\ASconstcaux \in \HHH_{i-1}$. Let $\epsilon > 0$ and $T>0$ be arbitrary. By Lemma~\ref{B1}, there exists $\delta_2 \in (0,\tfrac 13 T)$ such that
	\[
		\|S_{\delta_2}(u_0,\delta_2^{-1/p}\ASconstcaux,0) - (u_0 - \mathsf{P}_N\ASconstcaux^p)\|_{L^2} < \tfrac 14 \epsilon.
	\]
	On the other hand, a change of variable shows
	\[
		S_{\delta_2}(u_0,\delta_2^{-1/p}\ASconstcaux,0) = S_{\delta_2}(u_0 + \delta_2^{-1/p}\ASconstcaux,0) - \delta_2^{-1/p}\ASconstcaux
	\]
	so that
	\[
		\|S_{\delta_2}(u_0 + \delta_2^{-1/p}\ASconstcaux,0) - (u_0 - \mathsf{P}_N\ASconstcaux^p + \delta_2^{-1/p}\ASconstcaux)\|_{L^2} < \tfrac 14 \epsilon.
	\]
	By continuity, there exists a radius $\rho > 0$ such that
	\[
	 	\|S_{\delta_2}(u,0) - (u_0 - \mathsf{P}_N\ASconstcaux^p + \delta_2^{-1/p}\ASconstcaux)\|_{L^2} < \tfrac 12 \epsilon
	\]
	for all $u$ with \[\|u - (u_0 + \delta_2^{-1/p}\ASconstcaux)\|_{L^2} < \rho.\]
	By the induction hypothesis, there exists $\tilde \ASc_1 \in C([0,\delta_1];\HH_1)$ with $0 < \delta_1 < \tfrac 13 T$ such that
	$
	 	\|S_{\delta_1}(u_0,\tilde \ASc_1) - (u_0 + \delta_2^{-1/p}\ASconstcaux)\|_{L^2} < \rho,
	$
	and therefore such that
	\begin{align*}
		\|S_{\delta_2}(S_{\delta_1}(u_0,\tilde \ASc_1), 0) - (u_0 - \mathsf{P}_N \ASconstcaux^p + \delta_2^{-1/p }\ASconstcaux)\|_{L^2} < \tfrac 12 \epsilon.
	\end{align*}
	Yet again by the induction hypothesis, there exists $\tilde \ASc_3 \in C([0,\delta_3];\HH_1)$ with $0 < \delta_3 < \tfrac 13 T$ such that
	\[
		\|S_{\delta_3}(S_{\delta_2}(S_{\delta_1}(u_0,\tilde \ASc_1), 0),\tilde\ASc_3) - (S_{\delta_2}(S_{\delta_1}(u_0,\tilde \ASc_1), 0) - \delta_2^{-1/p} \ASconstcaux)\|_{L^2} < \tfrac 14 \epsilon.
	\]
	Therefore, by the triangle inequality,
	\[
		\|S_{\delta_3}(S_{\delta_2}(S_{\delta_1}(u_0,\tilde \ASc_1), 0),\tilde\ASc_3) - (u_0 - \mathsf{P}_N \ASconstcaux^p)\|_{L^2} < \tfrac 34 \epsilon.
	\]
	We conclude that~\eqref{eq:appr-goal} holds with~$ \ASc \in C([0,\delta_1 + \delta_2 + \delta_3];\HH_1)$ a good enough continuous approximation of the function $\one_{[0,\delta_1)}\tilde\ASc_1 + \one_{[\delta_1+\delta_2, \delta_1+\delta_2+\delta_3]}\tilde\ASc_3(\,\cdot\,-(\delta_1+\delta_2))$.
	Note that $0 < \delta_1 + \delta_2 + \delta_3 < T$ by construction.
\end{proof}

\begin{proof}[Proof of Lemma~\ref{B1}]
	Fix~$\ASconstcaux, \ASconstc \in \HH_N$ and let $u(t) = S_t(u_0,\AScaux,\ASc)$ with the constant controls~$\AScaux(t) \equiv \ASconstcaux$ and~$\ASc(t) \equiv \ASconstc$. Also let
	\[
	 	w(t) :=   u_0+t(\ASconstc - {\mathsf P}_N\ASconstcaux^p) \qquad \text{ and } \qquad v(t):= u(\delta t)-w(t).
	\]
	Clearly, the fact that~$u$ solves~\eqref{E:GPDE2} with $u(0) = u_0$ implies that~$v$ solves
	\[
		\dot v(t)-\nu \delta\Delta (v(t)+w(t)+\de^{-1/p}\ASconstcaux)+ \delta {\mathsf P}_N F(v(t)+w(t)+\de^{-1/p}\ASconstcaux)- {\mathsf P}_N\ASconstcaux^p =\delta h
	\]
	with~$v(0)=0$. Taking the scalar product in~$L^2$ of this equation with~$v(t)$, applying the Cauchy--Schwarz   inequality, and dropping the arguments~$(t)$ for notational simplicity,  we get
	\begin{align}
	\frac12\frac \d {\d t}  \| v \|^2_{L^2}     &\le  \Big(\nu \delta \|\Delta w\|_{L^2}+    \nu \delta^{1-1/p} \| \Delta \ASconstcaux\|_{L^2} +   \delta\|h\|_{L^2} \\
		&\qquad\qquad +\|\delta {\mathsf P}_N F(v + w + \de^{-1/p}\ASconstcaux)- {\mathsf P}_N\ASconstcaux^p\|_{L^2}\Big)  \|v\|_{L^2} \nonumber\\& \le C_1 \left(\delta^{1-1/p}+ \|\delta {\mathsf P}_N F(v+w+\de^{-1/p}\ASconstcaux)- {\mathsf P}_N\ASconstcaux^p\|_{L^2}\right)  \| v \| _{L^2}\label{E:50}
	\end{align} for any $t\le 1$ and $\delta\le 1$. Using the assumption (i) and   the Young inequality, we obtain
	      \begin{align}
	 \|\delta {\mathsf P}_N F(v+w+\de^{-1/p}\ASconstcaux)- {\mathsf P}_N\ASconstcaux^p\|_{L^2}&\le  C_2\delta \left(\|v\|_{L^2}^p+\|w\|_{L^2}^p+ \delta^{-(p-1)/p} \|\ASconstcaux\|^{p-1}_{L^2}+1\right)\nonumber\\
	 &\le C_3\delta \left(\|v\|_{L^2}^p + \delta^{-(p-1)/p}  +1\right).\label{E:51}
	\end{align}
	  Combining~\eqref{E:50} and~\eqref{E:51},   we see that
	\begin{align}
	 \frac \d {\d t}  \| v(t) \|_{L^2}^2    \le     C_4 \delta^{1/p}\left(\|v(t)\|_{L^2}^{p+1}+1   \right). \label{E:52}
	\end{align}
	  Let us set $A_\delta:=C_4 \delta^{1/p} $  and
	\begin{equation}
		\Phi(t):= A_\delta + A_\delta\int_0^t  \|v(s)\|_{L^2}^{p+1} \dd s.     \label{E:53}
	\end{equation}
	Then,~\eqref{E:52} is equivalent to
	\[
		(\dot \Phi)^{2/(p+1)}\le A_\delta^{2/(p+1)}\Phi,
	\]
	and
	\[
		\frac{\dot \Phi}{\Phi^{(p+1)/2}}\le A_\delta.
	\]
	Integrating this inequality, we derive
	\[
		\Phi(t)\le  A_\delta \left(1-\frac{p-1}{2}A_\delta^{(p+1)/2} t\right)^{-2/(p-1)}
	\]
	for all $0 \leq t <1  \wedge T_*(\delta)$, where
	\[
		T_*(\delta):=    \left(\frac{p-1}{2}A_\delta^{(p+1)/2}\right)^{-1}.
	\]
	Because $T_*(\delta) \uparrow \infty$ monotonically as $\delta \downarrow 0$, there exists $\delta_0 > 0$ small enough that
	\begin{equation}\label{E:54}
		\Phi(t) \le 2 A_\delta
	\end{equation}
	for all $0 \leq t \leq 1$, whenever $0 < \delta \leq \delta_0$.
	Then, combining~\eqref{E:52}--\eqref{E:54}, we obtain
	$$
		\|v(1)\|_{L^2}^2 \le C_5 \delta^{1/p}
	$$
	for some constant $C_5$ independent of~$\delta$. Thus $v(1)\to 0$ as $\delta\to 0$,   which implies~\eqref{E:49}.
\end{proof}

\section{Some results from measure theory}\label{A:C}
\subsection{Maximal couplings}

Let $\XXXX,\YYYY$, and $\ZZZZ$ be Polish  spaces endowed with their Borel $\sigma$-algebras,~$\zzzz\in \ZZZZ\mapsto\mu(\zzzz,{\cdot\,}),\,\mu'(\zzzz,{\cdot\,})$ be two random probability measures on~$\XXXX$, and $F:\XXXX\to \YYYY$ be a measurable mapping.
We denote by~$F_*\mu(\zzzz,{\cdot\,})$ the image of~$\mu(\zzzz,{\cdot\,})$ under $F$ (similarly for $\mu'$).
The following lemma on the existence of maximal couplings is a particular case of Exercise~1.2.30.ii in~\cite{KuSh} (see the last section   of the book for a proof).

\begin{lemma}\label{lem:m-c}
	There is a probability space $(\Omega,\FF,\IP)$ and measurable mappings $\xi, \xi':\ZZZZ\times \Omega\to \XXXX$ such that the following two properties are satisfied:
\begin{enumerate}
\item[$\bullet$] {for all~$\zzzz \in \ZZZZ$, $(\xi(\zzzz,{\cdot\,}),\xi'(\zzzz,{\cdot\,}))$ is a coupling of~$\mu(\zzzz,{\cdot\,})$ and~$\mu'(\zzzz,{\cdot\,})$ in the sense that
\begin{equation}\label{eq:m-c}
 \xi(\zzzz,{\cdot\,})_* \IP = \mu(\zzzz,{\cdot\,}) \qquad \text{and} \qquad
 \xi'(\zzzz,{\cdot\,})_* \IP = \mu'(\zzzz,{\cdot\,});
\end{equation}}
\item[$\bullet$] for all~$\zzzz \in \ZZZZ$, $(F(\xi(\zzzz,{\cdot\,})),F(\xi'(\zzzz,{\cdot\,})))$ is a maximal coupling of~$F_*\mu(\zzzz,{\cdot\,})$ and~$F_*\mu'(\zzzz,{\cdot\,})$ in the sense that
\begin{equation}\label{eq:m-c-F}
\IP\left(\{\omega\in\Omega : F(\xi(\zzzz,\omega)) \neq F(\xi'(\zzzz,\omega))\}\right) = \|F_*\mu(\zzzz,{\cdot\,}) - F_*\mu'(\zzzz,{\cdot\,})\|_\textnormal{var}
\end{equation}and the random variables  $F(\xi(\zzzz,{\cdot\,}))$ and $F(\xi'(\zzzz,{\cdot\,}))$ conditioned on the event
$$
\left\{\omega \in \Omega : F(\xi(\zzzz,\omega))\neq F(\xi'(\zzzz,\omega))\right\}
$$ are independent.
\end{enumerate}
\end{lemma}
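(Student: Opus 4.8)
The plan is to split the construction into three parts: first build, for each $\zzzz$, a maximal coupling of the pushforward measures $F_*\mu(\zzzz,{\cdot\,})$ and $F_*\mu'(\zzzz,{\cdot\,})$ on $\YYYY\times\YYYY$; then lift it to a coupling on $\XXXX\times\XXXX$ by disintegrating $\mu(\zzzz,{\cdot\,})$ and $\mu'(\zzzz,{\cdot\,})$ along $F$; and finally check that everything can be arranged to depend measurably on $(\zzzz,\omega)$ jointly. Throughout, I would fix Borel isomorphisms identifying $\XXXX$ and $\YYYY$ with Borel subsets of $[0,1]$ (available since they are Polish), so that all the measures in play can be realized as pushforwards of Lebesgue measure under maps measurable in the auxiliary parameters; this reduces the measurable-dependence questions to elementary manipulations of real functions and their cumulative distribution functions.

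\emph{Step 1: a measurable family of maximal couplings downstairs.} For each $\zzzz$ set $\rho(\zzzz,{\cdot\,}):=F_*\mu(\zzzz,{\cdot\,})\wedge F_*\mu'(\zzzz,{\cdot\,})$ (the largest measure dominated by both) and $p(\zzzz):=\rho(\zzzz,\YYYY)$, so that $p(\zzzz)=1-\|F_*\mu(\zzzz,{\cdot\,})-F_*\mu'(\zzzz,{\cdot\,})\|_{\textnormal{var}}$. Using a countable algebra generating $\mathcal B(\YYYY)$ and measurably chosen Radon--Nikodym densities, the maps $\zzzz\mapsto\rho(\zzzz,{\cdot\,})$, $\zzzz\mapsto p(\zzzz)$ and the normalized excess measures $(F_*\mu(\zzzz,{\cdot\,})-\rho(\zzzz,{\cdot\,}))/(1-p(\zzzz))$, $(F_*\mu'(\zzzz,{\cdot\,})-\rho(\zzzz,{\cdot\,}))/(1-p(\zzzz))$ are all measurable in $\zzzz$. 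On an auxiliary space carrying a uniform variable and independent sampling randomness, define $(Y,Y')(\zzzz,{\cdot\,})$ by: with probability $p(\zzzz)$ draw $Y=Y'$ from $\rho(\zzzz,{\cdot\,})/p(\zzzz)$; otherwise draw $Y$ and $Y'$ independently from the two normalized excess measures (with the obvious conventions when $p(\zzzz)\in\{0,1\}$). Since $\rho(\zzzz,{\cdot\,})$ is the largest measure below both pushforwards, the two excess measures are mutually singular, hence $\{Y\neq Y'\}$ coincides almost surely with the second branch; this gives a coupling of $(F_*\mu(\zzzz,{\cdot\,}),F_*\mu'(\zzzz,{\cdot\,}))$ with $\IP\{Y\neq Y'\}=1-p(\zzzz)=\|F_*\mu(\zzzz,{\cdot\,})-F_*\mu'(\zzzz,{\cdot\,})\|_{\textnormal{var}}$, and conditionally on $\{Y\neq Y'\}$ the pair $(Y,Y')$ is a product of the two excess laws, hence independent.

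\emph{Step 2: lifting along $F$.} Since $\XXXX$ is Polish and $F$ is Borel, the disintegration theorem provides, for each $\zzzz$, probability kernels $\yyyy\mapsto\mu_\yyyy(\zzzz,{\cdot\,})$ and $\yyyy\mapsto\mu'_\yyyy(\zzzz,{\cdot\,})$ on $\XXXX$, concentrated on $F^{-1}(\yyyy)$ for $F_*\mu(\zzzz,{\cdot\,})$- resp.\ $F_*\mu'(\zzzz,{\cdot\,})$-a.e.\ $\yyyy$, with $\mu(\zzzz,{\cdot\,})=\int\mu_\yyyy(\zzzz,{\cdot\,})\,F_*\mu(\zzzz,\d\yyyy)$ and likewise for $\mu'$; moreover these kernels may be chosen jointly measurable in $(\zzzz,\yyyy)$ (the parametrized form of the theorem, again reducible to the $[0,1]$-model). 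Given the pair $(Y,Y')$ from Step 1, draw $\xi$ from $\mu_Y(\zzzz,{\cdot\,})$ and $\xi'$ from $\mu'_{Y'}(\zzzz,{\cdot\,})$ using further independent randomness. Then $\mathcal{D}(\xi(\zzzz,{\cdot\,}))=\mu(\zzzz,{\cdot\,})$ and $\mathcal{D}(\xi'(\zzzz,{\cdot\,}))=\mu'(\zzzz,{\cdot\,})$, which is \eqref{eq:m-c}; and since $F(\xi(\zzzz,{\cdot\,}))=Y$ and $F(\xi'(\zzzz,{\cdot\,}))=Y'$ almost surely, the properties of $(Y,Y')$ yield exactly \eqref{eq:m-c-F} together with the conditional independence. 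Collecting the auxiliary spaces into a single product space $(\Omega,\FF,\IP)$ and tracing the construction, $\xi$ and $\xi'$ are measurable in $(\zzzz,\omega)$, as required.

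\emph{Main obstacle.} The only non-routine point is the \emph{uniform} choice of all ingredients — measurability in $\zzzz$ of the infimum measure $\rho(\zzzz,{\cdot\,})$ and the normalized excess measures, and joint measurability in $(\zzzz,\yyyy)$ of the conditional kernels $\mu_\yyyy(\zzzz,{\cdot\,})$, $\mu'_\yyyy(\zzzz,{\cdot\,})$. I would dispatch this by transporting everything through the fixed Borel isomorphism onto $[0,1]$, where measures are encoded by their distribution functions and the relevant operations (minimum of two measures, Radon--Nikodym densities, quantile transforms realizing a measure as a pushforward of Lebesgue measure) are visibly jointly measurable; the gluing in Steps 1--2 is then standard. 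This is precisely the content behind Exercise~1.2.30 in~\cite{KuSh}, of which the lemma is the stated special case.
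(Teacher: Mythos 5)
Your construction is correct, and it is essentially the standard argument behind the result the paper invokes: the paper itself gives no proof of Lemma~\ref{lem:m-c}, referring instead to Exercise~1.2.30.ii of~\cite{KuSh}, whose solution proceeds exactly along your lines (decompose the pushforwards into the common part $\rho=F_*\mu\wedge F_*\mu'$ and the two mutually singular normalized excess measures, sample the diagonal branch with probability $\rho(\YYYY)$ and the excess measures independently otherwise, then lift to $\XXXX$ by disintegrating $\mu$ and $\mu'$ along $F$). Your Step~1 correctly delivers both halves of the second bullet: the identity $\IP\{Y\neq Y'\}=1-\rho(\zzzz,\YYYY)=\|F_*\mu(\zzzz,{\cdot\,})-F_*\mu'(\zzzz,{\cdot\,})\|_{\textnormal{var}}$ (with the paper's normalization of the total variation norm) because the excess measures are mutually singular, and the conditional independence because the off-diagonal branch is by construction a product of the two normalized excess laws. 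The only substantive content you defer --- joint measurability in $(\zzzz,\yyyy)$ of the infimum measure, the excess densities, and the disintegration kernels --- is indeed the technical heart of the cited exercise, and your plan of transporting everything to $[0,1]$ via Borel isomorphisms and working with distribution functions and quantile transforms is the standard and adequate way to discharge it; you correctly identify it rather than glossing over it, so I see no gap.
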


\subsection{Images of   measures under regular mappings}\label{S:6.3}

 Let $\XXXX$ be a compact metric space, $\YYYY$ and $\ZZZZ$ be  finite-dimensional spaces,   and  $F:\XXXX\times \ZZZZ\to~\!\!\YYYY$ be a continuous mapping.   The following  is a consequence   of a   more general result proved in   Theorem~2.4 in~\cite{Sh07} (see also Chapter~9 of~\cite{Bog}). In this simplified context in finite dimension, it can be proven directly from the implicit function theorem and a change of variable.

\begin{lemma}\label{lem:image-meas}
Assume that the mapping $F(\xxxx,{\cdot\,}):\ZZZZ\to \YYYY$ is   differentiable for any  $\xxxx\in \XXXX$, the derivative $D_\zzzz F$ is continuous on~$\XXXX\times \ZZZZ$, the image of the linear operator $(D_\zzzz F)(\hat\xxxx,\hat\zzzz)$ has full rank for some $(\hat\xxxx,\hat\zzzz)\in \XXXX\times \ZZZZ$, and~$\varrho \in\PP(\ZZZZ)$ is a measure possessing  a positive continuous density with respect to the Lebesgue measure on~$\ZZZZ$.~Then there is  a   continuous function $\psi : \XXXX\times \YYYY\to \R_+$ and a   number $r>0$    such that
\[
\psi(\hat x,F(\hat\xxxx,\hat\zzzz))>0,
\]
and
\[
(F_*(\xxxx, {\cdot\,})\varrho)(\d y) \ge \psi(\xxxx,\yyyy)  \d\yyyy
\]
(as measures on~$\YYYY$) for all $\xxxx\in B_\XXXX(\hat x,r)$.
\end{lemma}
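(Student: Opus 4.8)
The plan is to follow the route indicated right after the statement: localize near $(\hat\xxxx,\hat\zzzz)$, put $F$ into a local normal form via the inverse function theorem applied \emph{with the parameter $\xxxx$ held fixed}, and then compute the pushforward of $\varrho$ by an explicit change of variables, taking care that every constant produced is uniform for $\xxxx$ in a small ball around $\hat\xxxx$. First I would identify $\YYYY$ and $\ZZZZ$ with Euclidean spaces $\rr^{k}$ and $\rr^{m}$ (so $k=\dim\YYYY\le m=\dim\ZZZZ$) and write $\rho$ for the positive continuous Lebesgue density of $\varrho$. Since $(D_\zzzz F)(\hat\xxxx,\hat\zzzz):\rr^{m}\to\YYYY$ is onto, a linear change of coordinates on $\ZZZZ$ (which preserves the class of measures with positive continuous Lebesgue density) lets us split $\zzzz=(v,w)\in\rr^{k}\times\rr^{m-k}$ so that $\partial_v F(\hat\xxxx,\hat\zzzz)$ is invertible; write $\hat\zzzz=(\hat v,\hat w)$ and $\hat\yyyy:=F(\hat\xxxx,\hat\zzzz)$. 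Joint continuity of $D_\zzzz F$ on $\XXXX\times\ZZZZ$ then supplies closed balls $U_0=B_\XXXX(\hat\xxxx,r_0)$, $V_0\ni\hat v$, $W_0\ni\hat w$ and a constant $M_0$ such that, on $U_0\times V_0\times W_0$, the operator $\partial_v F(\xxxx,v,w)$ stays invertible with $\|\partial_v F(\xxxx,v,w)^{-1}\|\le M_0$ and such that $(v,w)\mapsto\partial_v F(\xxxx,v,w)$ admits a modulus of continuity uniform in $\xxxx\in U_0$ (uniform continuity on the compact $U_0\times V_0\times W_0$).

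The next step, which I expect to be the only genuinely delicate one, is to make the inverse function theorem uniform in the parameters $(\xxxx,w)$ even though $F$ is merely continuous, not $C^1$, in $\xxxx$. The way around this is to avoid the jointly-$C^1$ statement of the theorem and instead run its contraction-mapping proof in the single variable $v$ for each fixed $(\xxxx,w)$: the quantitative conclusions of that proof --- the radius of a ball $V_1\ni\hat v$ on which $v\mapsto F(\xxxx,v,w)$ is injective, and the radius of a ball around $F(\xxxx,\hat v,w)$ contained in its image --- depend only on $M_0$ and on the uniform modulus of continuity above, hence can be taken independent of $\xxxx$ and $w$. The parameter $\xxxx$ is never differentiated; its continuity is used only to arrange, by shrinking $r_0$ and $W_0$, that $\|F(\xxxx,\hat v,w)-\hat\yyyy\|$ is small enough that the image of $v\mapsto F(\xxxx,v,w)$ contains a fixed ball $B_0:=B_\YYYY(\hat\yyyy,\epsilon_2)$ for \emph{every} $(\xxxx,w)\in U_0\times W_0$. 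Writing $\Psi_{\xxxx,w}:V_1\to\YYYY$ for the resulting $C^1$ diffeomorphism and $\Phi_{\xxxx,w}$ for its inverse (jointly continuous in all arguments), we have $B_0\subset\Psi_{\xxxx,w}(V_1)$ throughout, and $\overline{V_1}\times W_0$ is a compact subset of $\ZZZZ$.

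It then remains to disintegrate and push forward. For nonnegative Borel $g$ on $\YYYY$ and $\xxxx\in U_0$, restricting the integral and using Tonelli,
\[ \int_\YYYY g\,\dd\big(F_*(\xxxx,{\cdot\,})\varrho\big)\ \ge\ \int_{W_0}\!\bigg(\int_{V_1} g\big(F(\xxxx,v,w)\big)\rho(v,w)\,\dd v\bigg)\dd w\ \ge\ \int_{W_0}\!\bigg(\int_{B_0} g(\yyyy)\,\rho\big(\Phi_{\xxxx,w}(\yyyy),w\big)\big|\det\partial_v F\big(\xxxx,\Phi_{\xxxx,w}(\yyyy),w\big)\big|^{-1}\dd\yyyy\bigg)\dd w, \]
the inner inequality being the change of variables $\yyyy=\Psi_{\xxxx,w}(v)$ together with $B_0\subset\Psi_{\xxxx,w}(V_1)$. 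Exchanging the two integrals by Tonelli once more gives
\[ \big(F_*(\xxxx,{\cdot\,})\varrho\big)(\dd\yyyy)\ \ge\ \one_{B_0}(\yyyy)\,h(\xxxx,\yyyy)\,\dd\yyyy,\qquad h(\xxxx,\yyyy):=\int_{W_0}\rho\big(\Phi_{\xxxx,w}(\yyyy),w\big)\big|\det\partial_v F\big(\xxxx,\Phi_{\xxxx,w}(\yyyy),w\big)\big|^{-1}\dd w . \]
The set $\{(\Phi_{\xxxx,w}(\yyyy),w):\xxxx\in U_0,\ w\in W_0,\ \yyyy\in\overline{B_0}\}$ is contained in the compact $\overline{V_1}\times W_0$, on which $\rho$ is bounded below by some $\rho_{\min}>0$ and $|\det\partial_v F|$ is bounded above; hence there is a constant $c_0>0$ with $h(\xxxx,\yyyy)\ge c_0$ for all $(\xxxx,\yyyy)\in U_0\times B_0$.

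Finally I would manufacture the required continuous function $\psi$ by cutting off. Choose continuous functions $\chi:\XXXX\to[0,1]$ supported in $U_0$ with $\chi(\hat\xxxx)=1$ and $\phi:\YYYY\to[0,1]$ supported in $B_0$ with $\phi(\hat\yyyy)=1$, and set $\psi(\xxxx,\yyyy):=c_0\,\chi(\xxxx)\,\phi(\yyyy)$. Then $\psi$ is continuous on $\XXXX\times\YYYY$ and $\psi(\hat\xxxx,F(\hat\xxxx,\hat\zzzz))=c_0>0$; and for any $r\in(0,r_0]$ and any $\xxxx\in B_\XXXX(\hat\xxxx,r)\subset U_0$ one has $\psi(\xxxx,\yyyy)\le c_0\,\one_{B_0}(\yyyy)\le\one_{B_0}(\yyyy)\,h(\xxxx,\yyyy)$, so that $(F_*(\xxxx,{\cdot\,})\varrho)(\dd\yyyy)\ge\psi(\xxxx,\yyyy)\,\dd\yyyy$, which is the assertion. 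The routine points I am skipping are the verification that the pushforward change of variables is licit (the maps $\Psi_{\xxxx,w}$ are $C^1$ diffeomorphisms) and the joint measurability needed for the two applications of Tonelli, both of which follow from the joint continuity of $\Phi_{\xxxx,w}(\yyyy)$ and of $\partial_v F$.
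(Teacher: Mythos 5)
Your proof is correct, and it follows exactly the route the paper indicates for this lemma: the paper itself gives no written proof, deferring to Theorem~2.4 of~[Sh07] while remarking that in finite dimensions the result ``can be proven directly from the implicit function theorem and a change of variable.'' You have carried out precisely that direct argument, including the one genuinely delicate point (making the inverse function theorem quantitative and uniform in the parameter $\xxxx$, in which $F$ is only continuous), so nothing is missing.
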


\bibliographystyle{alpha-custom}
\bibliography{sde-harm}

\begin{thebibliography}{CEHRB18}

\bibitem[AK87]{AK87}
L.~Arnold and W.~Kliemann.
\newblock On unique ergodicity for degenerate diffusions.
\newblock {\em Stochastics}, 21(1):41--61, 1987.

\bibitem[AKSS07]{AKSS07}
A.~A. Agrachev, S.~Kuksin, A.~V. Sarychev, and A.~Shirikyan.
\newblock On finite-dimensional projections of distributions for solutions of
  randomly forced 2{D} {N}avier--{S}tokes equations.
\newblock {\em Ann. Inst. Henri Poincar\'e (B) Probab. Statist.},
  43(4):399--415, 2007.

\bibitem[AS05]{AS05}
A.~A. Agrachev and A.~V. Sarychev.
\newblock {N}avier--{S}tokes equations: controllability by means of low modes
  forcing.
\newblock {\em J. Math. Fluid Mech.}, 7(1):108--152, 2005.

\bibitem[AS08]{AS-2008}
A.~A. Agrachev and A.~V. Sarychev.
\newblock Solid controllability in fluid dynamics.
\newblock In {\em Instability in {M}odels {C}onnected with {F}luid {F}lows.
  {I}}, volume~6 of {\em Int. Math. Ser.}, pages 1--35. Springer, New York,
  2008.

\bibitem[BC09]{BC09}
A.~Baule and E.~G.~D. Cohen.
\newblock Fluctuation properties of an effective nonlinear system subject to
  poisson noise.
\newblock {\em Phys. Rev. E}, 79:030103, 2009.

\bibitem[Bog10]{Bog}
V.~I. Bogachev.
\newblock {\em Differentiable measures and the {M}alliavin calculus}, volume
  164 of {\em Mathematical Surveys and Monographs}.
\newblock Amer.\ Math.\ Soc., 2010.

\bibitem[CEHRB18]{CEHRB}
N.~Cuneo, J.-P. Eckmann, M.~Hairer, and L.~Rey-Bellet.
\newblock Non-equilibrium steady states for networks of oscillators.
\newblock {\em Electron. J. Probab.}, 23(Paper No.~55):1--28, 2018.

\bibitem[Cor07]{Cor}
J.-M. Coron.
\newblock {\em Control and nonlinearity}, volume 136 of {\em Mathematical
  Surveys and Monographs}.
\newblock Amer.\ Math.\ Soc., 2007.

\bibitem[DPZ96]{dPZa}
G.~Da~Prato and J.~Zabczyk.
\newblock {\em Ergodicity for infinite dimensional systems}, volume 229 of {\em
  London Math. Soc. Lecture Notes Series}.
\newblock Cambridge University Press, 1996.

\bibitem[EH00]{EH00}
J.-P. Eckmann and M.~Hairer.
\newblock Non-equilibrium statistical mechanics of strongly anharmonic chains
  of oscillators.
\newblock {\em Commun. Math. Phys.}, 212(1):105--164, 2000.

\bibitem[EPRB99a]{EPR99b}
J.-P. Eckmann, C.-A. Pillet, and L.~Rey-Bellet.
\newblock Entropy production in nonlinear, thermally driven {H}amiltonian
  systems.
\newblock {\em J. Stat. Phys.}, 95(1-2):305--331, 1999.

\bibitem[EPRB99b]{EPR99a}
J.-P. Eckmann, C.-A. Pillet, and L.~Rey-Bellet.
\newblock Non-equilibrium statistical mechanics of anharmonic chains coupled to
  two heat baths at different temperatures.
\newblock {\em Commun. Math. Phys.}, 201(3):657--697, 1999.

\bibitem[FKM65]{FKM65}
G.~W. Ford, M.~Kac, and P.~Mazur.
\newblock Statistical mechanics of assemblies of coupled oscillators.
\newblock {\em J. Math. Phys.}, 6:504--515, 1965.

\bibitem[Gri75]{Gr75}
D.~Griffeath.
\newblock A maximal coupling for {M}arkov chains.
\newblock {\em Z. Wahrscheinlichkeitstheorie und Verw. Gebiete}, 31:95--106,
  1974/75.

\bibitem[Hai11]{Ha11}
M.~Hairer.
\newblock On {M}alliavin's proof of {H}\"{o}rmander's theorem.
\newblock {\em Bull. Sci. Math.}, 135(6-7):650--666, 2011.

\bibitem[JK85]{JK-1985}
V.~Jurdjevic and I.~Kupka.
\newblock Polynomial control systems.
\newblock {\em Math. Ann.}, 272(3):361--368, 1985.

\bibitem[JP97]{JP97}
V.~Jak{\v{s}}i{\'c} and C.-A. Pillet.
\newblock Ergodic properties of the non-{M}arkovian {L}angevin equation.
\newblock {\em Lett. Math. Phys.}, 41(1), 1997.

\bibitem[JP98]{JP98}
V.~Jak{\v{s}}i{\'c} and C.-A. Pillet.
\newblock Ergodic properties of classical dissipative systems {I}.
\newblock {\em Acta Math.}, 181(2):245--282, 1998.

\bibitem[JPS17]{JPS17}
V.~Jak{\v{s}}i{\'{c}}, C.-A. Pillet, and A.~Shirikyan.
\newblock Entropic fluctuations in thermally driven harmonic networks.
\newblock {\em J. Stat. Phys.}, 166(3):926--1015, Feb 2017.

\bibitem[Jur97]{J-1997}
V.~Jurdjevic.
\newblock {\em Geometric control theory}, volume~52 of {\em Cambridge Studies
  in Advanced Mathematics}.
\newblock Cambridge University Press, 1997.

\bibitem[Kha12]{H-2012}
R.~Khasminskii.
\newblock {\em Stochastic stability of differential equations}, volume~66 of
  {\em Stochastic Modelling and Applied Probability}.
\newblock Springer, Heidelberg, second edition, 2012.
\newblock With contributions by G. N. Milstein and M. B. Nevelson.

\bibitem[KS12]{KuSh}
S.~Kuksin and A.~Shirikyan.
\newblock {\em Mathematics of two-dimensional turbulence}, volume 194 of {\em
  Cambridge Tracts in Mathematics}.
\newblock Cambridge University Press, 2012.

\bibitem[MG12]{MG12}
W.~A.~M. Morgado and T.~Guerreiro.
\newblock A study on the action of non-{G}aussian noise on a {B}rownian
  particle.
\newblock {\em Physica A Stat. Mech. Appl.}, 391(15):3816--3827, 2012.

\bibitem[MQSP11]{MQ+11}
W.~A.~M. Morgado, S.~M.~D. Queir{\'o}s, and D.~O. Soares-Pinto.
\newblock On exact time averages of a massive {P}oisson particle.
\newblock {\em J. Stat. Mech. Theory Exp.}, 2011(06):P06010, 2011.

\bibitem[MT93]{MT1993}
S.~P. Meyn and R.~L. Tweedie.
\newblock {\em Markov {C}hains and {S}tochastic {S}tability}.
\newblock Springer-Verlag, 1993.

\bibitem[MT12]{MeTw}
S.~P. Meyn and R.~L. Tweedie.
\newblock {\em {M}arkov chains and stochastic stability}.
\newblock Communications and Control Engineering Series. Springer Science \&
  Business Media, 2012.

\bibitem[Ner08]{Ne08}
V.~Nersesyan.
\newblock Polynomial mixing for the complex {G}inzburg-{L}andau equation
  perturbed by a random force at random times.
\newblock {\em J. Evol. Equ.}, 8(1):1--29, 2008.

\bibitem[Ner20]{N-2019}
V.~Nersesyan.
\newblock {Approximate controllability of nonlinear parabolic PDEs in arbitrary
  space dimension}.
\newblock {\em Math. Control Relat. Fields}, To appear, 2020.

\bibitem[Nua06]{nualart2006}
D.~Nualart.
\newblock {\em {The {M}alliavin Calculus and Related Topics}}.
\newblock Springer-Verlag, Berlin, 2006.

\bibitem[Raq19]{Ra18}
R.~Raqu{\'e}pas.
\newblock A note on {H}arris' ergodic theorem, controllability and
  perturbations of harmonic networks.
\newblock {\em Ann. Henri Poincar{\'e}}, 20(2):605--629, 2019.

\bibitem[RBT02]{RBT02}
L.~Rey-Bellet and L.~E. Thomas.
\newblock Exponential convergence to non-equilibrium stationary states in
  classical statistical mechanics.
\newblock {\em Commun. Math. Phys.}, 225(2):305--329, 2002.

\bibitem[Shi07]{Sh07}
A.~Shirikyan.
\newblock Qualitative properties of stationary measures for three-dimensional
  {N}avier-{S}tokes equations.
\newblock {\em J. Funct. Anal.}, 249(2):284--306, 2007.

\bibitem[Shi08]{Sh08}
A.~Shirikyan.
\newblock Exponential mixing for randomly forced partial differential
  equations: method of coupling.
\newblock In {\em Instability in models connected with fluid flows. {II}},
  volume~7 of {\em Int. Math. Ser. (N. Y.)}, pages 155--188. Springer, New
  York, 2008.

\bibitem[Shi17]{Sh17}
A.~Shirikyan.
\newblock Controllability implies mixing. {I}. {C}onvergence in the total
  variation metric.
\newblock {\em Uspekhi Mat. Nauk}, 72(5(437)):165--180, 2017.

\bibitem[Shi18]{MR3777005}
A.~Shirikyan.
\newblock Control theory for the {B}urgers equation: {A}grachev-{S}arychev
  approach.
\newblock {\em Pure Appl. Funct. Anal.}, 3(1):219--240, 2018.

\bibitem[SL77]{LS77}
H.~Spohn and J.~L. Lebowitz.
\newblock Stationary non-equilibrium states of infinite harmonic systems.
\newblock {\em Commun. Math. Phys.}, 54(2):97--120, 1977.

\bibitem[TC09]{TC09}
H.~Touchette and E.~G.~D. Cohen.
\newblock Anomalous fluctuation properties.
\newblock {\em Phys. Rev. E}, 80:011114, Jul 2009.

\bibitem[Tro77]{Tr77}
M.~M. Tropper.
\newblock Ergodic and quasideterministic properties of finite-dimensional
  stochastic systems.
\newblock {\em J. Stat. Phys.}, 17(6):491--509, 1977.

\end{thebibliography}

\end{document}